\newcounter{minutes}\setcounter{minutes}{\time}
\newcounter{hours}\setcounter{hours}{\time}
 \def\registered{
 {\ooalign{\hfil\raise .00ex\hbox{\scriptsize R}\hfil\crcr\mathhexbox20D}}}
\date{}
\newfont{\cyrilic}{wncyr10 scaled 1000}
\title{On generalized complete elliptic integrals and modular functions }
\author[B. A. Bhayo]{B. A. Bhayo}
\address{Department of Mathematics, University of Turku,
FI-20014 Turku, Finland} \email{barbha@utu.fi}
\author[M. Vuorinen]{M. Vuorinen}
\address{Department of Mathematics, University of Turku,
FI-20014 Turku, Finland} \email{vuorinen@utu.fi}
\newcommand{\comment}[1]{}
\theoremstyle{plain}
\newtheorem{theorem}[equation]{Theorem}
\newtheorem{lemma}[equation]{Lemma}
\newtheorem{corollary}[equation]{Corollary}
\newtheorem{remark}[equation]{Remark}
\newtheorem{subsec}[equation]{}
\numberwithin{equation}{section}
\begin{document}
%\linenumbers
%%%%%KAUNIS K  \K %%%%%%%%%%%%%%
\font\fFt=eusm10 %scaled 1200
\font\fFa=eusm7  %scaled 1200
\font\fFp=eusm5  %scaled 1200
\def\K{\mathchoice
 %%%displaystyle
{\hbox{\,\fFt K}}
%%%%textstyle
{\hbox{\,\fFt K}}
%%%scriptstyle
{\hbox{\,\fFa K}}
%%%%scriptscriptstyle
{\hbox{\,\fFp K}}}
%%%%%%%%%%%%
\def\E{\mathchoice
 %%%displaystyle
{\hbox{\,\fFt E}}
%%%%textstyle
{\hbox{\,\fFt E}}
%%%scriptstyle
{\hbox{\,\fFa E}}
%%%%scriptscriptstyle
{\hbox{\,\fFp E}}}
%%%%%%%%%%%%

\maketitle
\vspace{.5cm}

\begin{abstract}
This paper deals with generalized elliptic integrals and
generalized modular functions. Several new inequalities are given for these
and related functions.
\end{abstract}

{\bf 2010 Mathematics Subject Classification}: 33C99, 33B99

{\bf Keywords and phrases}: Modular equation, generalized elliptic integral.

%===============================================================================
%%%%%%%% BEGIN TIMESTAMP
\def\thefootnote{}
\footnotetext{ \texttt{\tiny File:~\jobname .tex,
          printed: \number\year-\number\month-\number\day,
          \thehours.\ifnum\theminutes<10{0}\fi\theminutes}
} \makeatletter\def\thefootnote{\@arabic\c@footnote}\makeatother
%%%%%%%% END TIMESTAMP
%===============================================================================

\vspace{.5cm}

%%%%%%%%%%%%
\section{Introduction}

During the past fifteen years, after the publication of the landmark paper
\cite{bbg}, numerous papers have been written about generalized elliptic
 integrals, modular functions and inequalities for them.
See e.g. \cite{aqvv, aq, b, b2, hlvv, hvv, wzc, wzqc, zwc1,zwc2}.
Modular equations have a long history, which goes back to the works of
A.M. Legendre, K.F. Gauss, C. Jacobi and S. Ramanujan about number theory.
Modular equations also occur in geometric function theory as shown in
 \cite{aqvv,vu,k,lv} and in numerical computations of moduli of
 quadrilaterals \cite{hrv}. For recent surveys of this topic from the point
 of view of geometric function
theory, see \cite{avv4,avv5,av}.
The study of these functions is motivated by potential applications to
geometric function theory and to number theory.

Given complex numbers $a,b$ and $c$ with $c\neq0,-1,-2,\ldots$,
the \emph{Gaussian hypergeometric function} is the
analytic continuation to the slit place $\mathbf{C}\setminus[1,\infty)$ of the series
$$F(a,b;c;z)={}_2F_1(a,b;c;z)=\sum^\infty_{n=0}\frac{(a,n)(b,n)}
{(c,n)}\frac{z^n}{n!},\qquad |z|<1.$$
Here $(a,0)=1$ for $a\neq 0$, and $(a,n)$ is the \emph{shifted factorial function}
 or the \emph{Appell symbol}
$$(a,n)=a(a+1)(a+2)\cdots(a+n-1)$$
for $n\in\mathbb{Z}_+$.
%The hypergeometric function has numerous special functions as its
%special or limiting cases, see \cite{AS}.

For later use we define classical \emph{gamma function} $\Gamma(x)$,
 and  \emph{beta function} $B(x,y)$.
For ${\rm Re}\, x>0$, ${\rm Re}\, y>0$, these functions are defined by
$$\Gamma(x)=\int^\infty_0 e^{-t}t^{x-1}\,dt,\,\,
B(x,y)=\frac{\Gamma(x)\Gamma(y)}{\Gamma(x+y)},$$
respectively.

For the formulation of our main results and for
later use we introduce some
basic notation.
The decreasing homeomorphism
$\mu_a:(0,1)\to (0,\infty)$ is defined by
$$\mu_a(r)=\frac{\pi}{2\,\sin(\pi\, a)}\frac{F(a,1-a;1;r^{'2})}
{F(a,1-a;1;r^2)}=
\frac{\pi}{2\,\sin(\pi\, a)}\frac{\K_a(r^{'})}{\K_a(r)}\,$$
for $r\in(0,1)$ and $r^{'}=\sqrt{1-r^2}$.
 A generalized modular equation with signature $1/a$ and
order (or degree) $p$ is
\begin{equation}\label{meq}
\mu_a(s)=p\,\mu_a(r),\quad 0<r<1.
\end{equation}
We denote
\begin{equation}\label{1a}
s=\varphi^a_K(r)\equiv\mu^{-1}_a(\mu_a(r)/K),\quad K\in(0,\infty), \, p= 1/K\,,
\end{equation}
which is the solution of (\ref{meq}).

For $a\in(0,1/2],\,K\in(0,\infty)
,\,r\in(0,1)$, we have by  \cite[Lemma 6.1]{aqvv}
\begin{equation}\label{1aa}
\varphi^a_K(r)^2+\varphi^a_{1/K}(r^{'})^2=1.
\end{equation}
For $a\in(0,1/2]\,,$ $r\in(0,1)$ and
$r^{'}=\sqrt{1-r^2}$, the generalized elliptic integrals are defined by

$$\left\{\begin{array}{lll} \K_a(r)=\displaystyle\frac{\pi}{2}\,F(a,1-a;1;r^2),\\
                          \K^{'}_a(r)=\K_a(r^{'}),\\
                      \K_a(0)=\displaystyle\frac{\pi}{2},\, \K_a(1)=\infty,
                     \end{array}\right.
\quad{\rm and}\quad \left\{\begin{array}{lll} \E_a(r)=\displaystyle\frac{\pi}{2}\,
                          F(a-1,1-a;1;r^2),\\
                          \E^{'}_a(r)=\E_a(r^{'}),\\
                      \E_a(0)=\displaystyle\frac{\pi}{2},\,
                      \E_a(1)=\displaystyle\frac
                      {\sin(\pi\,a)}{2(1-a)}.
                     \end{array}\right.$$
In this paper we study the modular function $\varphi^a_K(r)$ for general
$a\in(0,\frac{1}{2}]$, as well as
 related functions $\mu_a,\,\K_a,\, \eta^a_K$,\,
$\lambda_a$,  and their dependency on $r$ and $K$, where

$$\eta^a_K(x)=\left(\frac{s}{s^{'}}\right)^2,\, s=\varphi^a_K(r),\,
r=\sqrt{\frac{x}{1+x}}\,,\,
\text{ for }x,K\in(0,\infty),$$
and
\begin{equation}\label{1b}
\lambda_a(K)=\left(\frac{\varphi^a_K(1/\sqrt{2})}
{\varphi^a_{1/K}(1/\sqrt{2})}\right)^2
=\left(\frac{\mu^{-1}_a(\pi /(2K \sin(\pi\,a))}
{\mu^{-1}_a(\pi K/(2 \sin(\pi\,a))}\right)^2
=\eta^a_K(1).
\end{equation}
Motivated by \cite{lp} and \cite{bv} we define
for $p>1$ and $r\in(0,1)$,
$${\rm artanh}_p(x)=\int^x_0(1-t^p)^{-1}dt=xF\left
(1\,,\frac{1}{p};1+\frac{1}{p};x^p\right)\,.$$
Then ${\rm artanh}_2(x)$ is the usual inverse hyperbolic tangent {(\rm artanh)} function.

We give next some of the main results of this paper.
\begin{theorem}\label{nn2} For $a,b,c>0,$ and $r\in(0,1)$,
 the function $g(p)=F(a,b;c;r^{p})^{1/p}\,$
is decreasing in $p\in(0,\infty)$. In particular, for $p\geq 1$\\
$(1)\qquad\qquad F(a,b;c;r^{p})^{1/p}\leq F(a,b;c;r)
\leq F(a,b;c;r^{1/p})^{p}$\,,\\
$(2)\qquad\qquad\qquad \left(\displaystyle\frac{\pi}{2} \right)^{1-1/p}   \K_a(r^{p})^{1/p}\leq
\K_a(r)\leq \left(\displaystyle\frac{\pi}{2}\right)^{1-p}    \K_a(r^{1/p})^{p}$\,,\\
$(3)\qquad\qquad\qquad\qquad\left(\displaystyle\frac{\pi}{2}\right)^{1-p}
\E_a(r^{1/p})^p\leq
 \E_a(r)\leq \left(\displaystyle\frac{\pi}{2}\right)^{1-1/p}
 \E_a(r^{p})^{1/p}$.

\end{theorem}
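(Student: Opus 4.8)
The plan is to reduce the whole theorem to a single elementary inequality for hypergeometric series with nonnegative coefficients. Writing $f(x)=F(a,b;c;x)=\sum_{n=0}^{\infty}c_n x^n$ with $c_n=(a,n)(b,n)/((c,n)\,n!)$, the hypotheses $a,b,c>0$ guarantee $c_0=1$ and $c_n>0$ for every $n$. I would first observe that the asserted monotonicity of $g(p)=f(r^p)^{1/p}$ is equivalent to
\begin{equation*}
f(y^{\theta})\le f(y)^{\theta}\qquad\text{for all } y\in(0,1),\ \theta\ge 1 .
\end{equation*}
Indeed, for $0<p\le q$ one has $g(p)\ge g(q)$ iff $f(r^p)^{q}\ge f(r^q)^{p}$ iff $f(y)^{\theta}\ge f(y^{\theta})$, where $y=r^p\in(0,1)$ and $\theta=q/p\ge 1$ (so that $r^q=y^{\theta}$); each exponentiation preserves order because all quantities are positive.

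This displayed inequality will then follow from two elementary facts. Since the $c_n$ are nonnegative with $c_0=1$, the function $f$ is increasing on $(0,1)$ and satisfies $f(y)\ge 1$. For $\theta\ge 1$ and $y\in(0,1)$ we have $y^{\theta}\le y$, hence $f(y^{\theta})\le f(y)$ by monotonicity; and since $f(y)\ge 1$ and $\theta\ge 1$ we also have $f(y)\le f(y)^{\theta}$. Chaining these gives $f(y^{\theta})\le f(y)\le f(y)^{\theta}$, which is exactly what is needed (strictly, when $\theta>1$, because then $c_n>0$ forces $f(y)>1$ and $y^{\theta}<y$). Thus $g$ is strictly decreasing, and part $(1)$ is just the two instances $g(p)\le g(1)$ and $g(1)\le g(1/p)$ for $p\ge 1$.

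For part $(2)$ I would apply $(1)$ to $\tilde f(x)=F(a,1-a;1;x)$, whose parameters $a,\,1-a,\,1$ are all positive for $a\in(0,1/2]$. Using $\K_a(r)=\frac{\pi}{2}\tilde f(r^2)$ and $(r^p)^2=(r^2)^p$, the inequality $\tilde f((r^2)^p)^{1/p}\le\tilde f(r^2)$ becomes
\begin{equation*}
\Bigl(\tfrac{\pi}{2}\Bigr)^{1-1/p}\K_a(r^p)^{1/p}
=\tfrac{\pi}{2}\,\tilde f((r^2)^p)^{1/p}\le\tfrac{\pi}{2}\,\tilde f(r^2)=\K_a(r),
\end{equation*}
the factor $(\pi/2)^{1/p}$ produced by the $1/p$-power absorbing into $(\pi/2)^{1-1/p}$; the right-hand inequality of $(2)$ comes symmetrically from $g(1)\le g(1/p)$.

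The one genuine subtlety is part $(3)$. Here $\E_a(r)=\frac{\pi}{2}\,\psi(r^2)$ with $\psi(x)=F(a-1,1-a;1;x)$, and now the first parameter $a-1$ is negative, so the main theorem does not apply and, consistently, the inequalities of $(3)$ run the other way. The key is that for $a\in(0,1/2]$ one has $(a-1,n)<0$ and $(1-a,n)>0$ for $n\ge 1$, so the coefficients of $\psi$ satisfy $c_0=1$ and $c_n<0$ for $n\ge1$; hence $\psi$ is decreasing on $(0,1)$ with $0<\psi(y)\le 1$. The chain then reverses: for $\theta\ge1$ and $y\in(0,1)$, $y^{\theta}\le y$ gives $\psi(y^{\theta})\ge\psi(y)$, while $\psi(y)\le 1$ and $\theta\ge1$ give $\psi(y)\ge\psi(y)^{\theta}$, whence $\psi(y^{\theta})\ge\psi(y)^{\theta}$. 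This makes $\psi(r^p)^{1/p}$ \emph{increasing} in $p$, and homogenizing the $\pi/2$ factors exactly as in $(2)$ yields the two reversed inequalities of $(3)$. I expect the sign analysis of the coefficients of $\psi$ (equivalently, recognizing that the relevant monotonicity flips) to be the main point to get right; everything else is the elementary two-step chain above.
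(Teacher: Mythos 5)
Your proposal is correct, but it proves the theorem by a genuinely different route than the paper. The paper differentiates: writing $G(r)=F(a,b;c;r^p)$, it computes $g'(p)$ via the contiguous derivative formula of Lemma \ref{1d}(1) and reads off that both terms in the resulting bracket have the same sign (using $\log(1/r)>0$ and $G(r)>1$), and for part (3) it sets $h(p)=F(-a,b;c;r^p)^{1/p}$ and argues $h'(p)>0$ because there $F(r)\in(0,1)$. You avoid calculus entirely: you reduce the monotonicity of $g$ to the pointwise inequality $f(y^{\theta})\le f(y)^{\theta}$ for $y\in(0,1)$, $\theta\ge 1$, and settle it by the two-step chain $f(y^{\theta})\le f(y)\le f(y)^{\theta}$, using only that the Maclaurin coefficients of $f$ are positive (so $f$ is increasing and $f\ge 1$); for part (3) you observe that the coefficients of $\psi(x)=F(a-1,1-a;1;x)$ are negative for $n\ge 1$, so the chain reverses and $\psi(r^p)^{1/p}$ is increasing in $p$. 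Your argument is more elementary, yields strict monotonicity for free, and makes the reversal in (3) structurally transparent, where the paper's sign verification is a computation; the paper's route, in exchange, produces an explicit formula for $g'(p)$ that could support quantitative refinements. One small point to patch: the positivity $\psi(y)>0$, which you need both to form $\psi(r^p)^{1/p}$ and for the step $\psi(y)\ge\psi(y)^{\theta}$, does not follow from the coefficient signs alone (those give only $\psi\le 1$ and $\psi$ decreasing); it does follow, e.g., from Gauss's summation, $\psi(1)=1/(\Gamma(a)\Gamma(2-a))=\sin(\pi a)/((1-a)\pi)>0$, combined with the monotonicity of $\psi$, or equivalently from $\E_a(1)=\sin(\pi a)/(2(1-a))>0$ as recorded in the paper's definitions --- note that the paper's own assertion $F(r)\in(0,1)$ is the same fact, likewise stated there without proof.
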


%%%%%%%%%%%%%%%%%%
%%%%%%%%%%%%%%%%
H. Alzer and S.-L. Qiu have given the following bounds for
 $\K=\K_{1/2}$ in \cite[Theorem 18]{aq}
\begin{equation}\label{aqbn}
\frac{\pi}{2}\left(\frac{{\rm artanh}(r)}{r}\right)^{3/4}
<\K(r)<\frac{\pi}{2}\left(\frac{{\rm artanh}(r)}{r}\right)\,.
\end{equation}
In the following theorem we generalize their result to the case of $\K_a$, and
for the particular case $a=1/2$ our
upper bound is better than their bound in (\ref{aqbn}). For a graphical comparison of the
bounds see Figure 1 below.

\bigskip
\begin{theorem}\label{kk}
For $p\geq 2$ and $r\in(0,1)$, we have
\begin{eqnarray*}
\frac{\pi}{2}\left(\frac{{\rm artanh}_p(r)}{r}\right)^{1/2}
&<& \frac{\pi}{2}\left(1-\frac{p-1}{p^2}\log(1- r^2)\right)\\
&<&\K_a(r)<
\frac{\pi}{2}\left(1-\frac{2}{p\,\pi_p}\log (1-r^2)\right)\,,
\end{eqnarray*}
where $a=1/p$ and $\pi_p=2\pi/(p\sin(\pi/p))\,.$
\end{theorem}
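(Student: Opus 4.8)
The plan is to prove the three inequalities separately. The two that involve $\K_a$ are Maclaurin-series comparisons, so I would first record the relevant expansions. Writing $c_n=(a,n)(1-a,n)/(n!)^2$ we have $\K_a(r)=\frac{\pi}{2}\sum_{n\ge0}c_nr^{2n}$, while $-\log(1-r^2)=\sum_{n\ge1}r^{2n}/n$; and since $(1,n)=n!$ and $(1/p,n)/(1+1/p,n)=1/(pn+1)$, we get ${\rm artanh}_p(r)/r=F(1,1/p;1+1/p;r^p)=\sum_{n\ge0}r^{pn}/(pn+1)$. I would also note the two numerical identities forced by $a=1/p$: namely $c_1=a(1-a)=(p-1)/p^2$, and, via the reflection formula $\Gamma(a)\Gamma(1-a)=\pi/\sin(\pi a)$, the relation $\sin(\pi a)/\pi=2/(p\pi_p)$. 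Thus the middle constant is the first nonconstant Maclaurin coefficient of $\K_a$, and the upper constant is the common value $\lim_{n\to\infty}nc_n$.

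For the middle inequality $\frac{\pi}{2}(1-\frac{p-1}{p^2}\log(1-r^2))<\K_a(r)$ I would compare the two series coefficientwise. The constant terms agree, and the $r^2$-coefficients agree because $c_1=a(1-a)$; so it suffices to show $c_n>a(1-a)/n$ for $n\ge2$. Factoring $(a,n)(1-a,n)=\prod_{j=0}^{n-1}(a+j)(1-a+j)$ and using the identity $(a+j)(j+1-a)=j(j+1)+a(1-a)>j(j+1)$ for $j\ge1$, I obtain $(a,n)(1-a,n)>a(1-a)\prod_{j=1}^{n-1}j(j+1)=a(1-a)(n-1)!\,n!$, whence $c_n>a(1-a)/n$ strictly, which proves the claim.

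For the upper inequality $\K_a(r)<\frac{\pi}{2}(1-\frac{2}{p\pi_p}\log(1-r^2))$ it suffices, after the same reduction, to show $c_n<\sin(\pi a)/(\pi n)$ for every $n\ge1$. Setting $f(n)=n\,\Gamma(n+a)\Gamma(n+1-a)/(n!)^2=n\,c_n\,\Gamma(a)\Gamma(1-a)$, this is exactly $f(n)<1$. I would compute $f(n+1)/f(n)=(n+a)(n+1-a)/(n(n+1))=1+a(1-a)/(n(n+1))>1$, so $f$ is strictly increasing, and identify $\lim_{n\to\infty}f(n)=1$ from $\Gamma(n+a)/\Gamma(n+1)\sim n^{a-1}$ and $\Gamma(n+1-a)/\Gamma(n+1)\sim n^{-a}$. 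An increasing sequence with limit $1$ stays below $1$, giving $c_n<\sin(\pi a)/(\pi n)$ and hence the bound.

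The leftmost inequality needs a preliminary reduction, and I expect its setup to be the main obstacle, because of the square root and the mismatch between the exponents $r^{pn}$ and $r^{2m}$. Since $M(r)=1-\frac{p-1}{p^2}\log(1-r^2)>0$, the claim $\sqrt{{\rm artanh}_p(r)/r}<M(r)$ is equivalent to ${\rm artanh}_p(r)/r<M(r)^2$; expanding $M(r)^2=1-\frac{2(p-1)}{p^2}\log(1-r^2)+\frac{(p-1)^2}{p^4}\log^2(1-r^2)$ and discarding the nonnegative last term, it suffices to prove the linear bound ${\rm artanh}_p(r)/r<1-\frac{2(p-1)}{p^2}\log(1-r^2)$. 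I would then compare $\sum_{n\ge1}r^{pn}/(pn+1)$ with $\frac{2(p-1)}{p^2}\sum_{n\ge1}r^{2n}/n$ term by term: since $p\ge2$ and $r\in(0,1)$ we have $r^{pn}\le r^{2n}$, while the elementary inequality $1/(pn+1)\le 2(p-1)/(p^2n)$ rearranges to $pn(2-p)\le2(p-1)$, which is automatic for $p\ge2$. Tracking strictness (from the $\log^2$ term when $p=2$, and from $r^{pn}<r^{2n}$ when $p>2$) then closes the whole chain.
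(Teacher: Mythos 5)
Your proof is correct, but it takes a genuinely different and more self-contained route than the paper's. The paper derives all three inequalities from one cited result, Lemma \ref{thm1.52} (\cite[Theorem 1.52(1)]{avvb}): the two bounds on $\K_a(r)$ come from applying that lemma's endpoint bounds to $F(1/p,1-1/p;1;r^2)$, while the leftmost inequality is obtained by applying the same lemma to $F(1,1/p;1+1/p;r^p)$ to get ${\rm artanh}_p(r)/r<1-\frac{1}{p}\log(1-r^p)$, then taking square roots via the Bernoulli inequality $\sqrt{1+u}\leq 1+u/2$, and finally using $1/(2p)\leq (p-1)/p^2$ and $r^p\leq r^2$ for $p\geq 2$. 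You instead argue coefficientwise: your bounds $a(1-a)/n<c_n<\sin(\pi a)/(\pi n)$ (the latter from the strictly increasing sequence $f(n)=n\,c_n\,\Gamma(a)\Gamma(1-a)$ with limit $1$) amount to a direct elementary proof of Lemma \ref{thm1.52}'s endpoint bounds in the special case $F(a,1-a;1;x)$, since $\log(1/(1-x))=\sum_{n\geq 1}x^n/n$; and your handling of the leftmost inequality --- squaring, discarding the positive $\log^2$ term, and comparing $\sum r^{pn}/(pn+1)$ with $\frac{2(p-1)}{p^2}\sum r^{2n}/n$ term by term via $r^{pn}\leq r^{2n}$ and $pn(2-p)\leq 2(p-1)$ --- replaces the paper's Bernoulli step, while using the hypothesis $p\geq 2$ in the same two places (exponents and coefficients). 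What your route buys is independence from the AVV machinery, with everything reduced to elementary series estimates, plus the insight that the constant $2/(p\,\pi_p)$ is the sharp asymptotic value $\lim_{n\to\infty}n\,c_n$; what the paper's route buys is brevity, since the monotonicity lemma is quoted rather than reproved. One cosmetic remark: the strictness supplied by the $\log^2$ term is available for every $p\geq 2$, not only $p=2$, so your case split in the final sentence is unnecessary (though harmless).
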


In \cite[Theorem 5.6]{aqvv} (see also \cite[Theorem 1.5, 1.8]{bpv}) it was proved that for $a\in(0,1/2]$ we have

 $$\mu_a\left(\frac{r\,s}{1+r^{'}s^{'}}\right)\leq \mu_a(r)+\mu_a(s)\leq
 2\mu_a\left(\frac{\sqrt{2rs}}{\sqrt{1+r\,s+r^{'}\,s^{'}}}\right)\,,$$
 for all $r,s\in(0,1)$. This inequality will be generalized below in Theorem \ref{muac}.
 In the next theorem we give a similar result for
 the function $\K_a$.

\begin{theorem}\label{kka} The function $f(x)=1/\K_a(1/\cosh(x))$ is increasing
 and concave from $(0,\infty)$ onto $(0,2/\pi)$. In particular,
 $$\frac{\K_a(r)\K_a(s)}{\K_a(rs/(1+r^{'}s^{'}))}\leq \K_a(r)+\K_a(s)\leq
 \frac{2\K_a(r)\K_a(s)}{\K_a(\sqrt{rs/(1+rs+r^{'}s^{'})})}\leq
 \frac{2\K_a(r)\K_a(s)}{\K_a(rs)}\,,$$
 for all $r,s\in(0,1)$, with equality in the third inequality if and only if $r=s$.
\end{theorem}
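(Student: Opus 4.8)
The plan is to reduce everything to the substitution $r=1/\cosh x$, for which $r'=\sqrt{1-r^2}=\tanh x$ and $f(x)=1/\K_a(r)$. Monotonicity and the range are then immediate: $\K_a(r)=\frac{\pi}{2}F(a,1-a;1;r^2)$ is a strictly increasing homeomorphism of $(0,1)$ onto $(\pi/2,\infty)$, while $x\mapsto 1/\cosh x$ decreases from $1$ to $0$ on $(0,\infty)$; hence $\K_a(1/\cosh x)$ decreases from $\infty$ to $\pi/2$, so that $f$ increases from $0$ to $2/\pi$.

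The concavity is the heart of the matter. Using the chain rule with $dr/dx=-rr'$ and the known differentiation formula $d\K_a/dr=2(1-a)(\E_a-r'^2\K_a)/(rr'^2)$ \cite{aqvv}, one gets
$$f'(x)=\frac{2(1-a)\big(\E_a(r)-r'^2\,\K_a(r)\big)}{r'\,\K_a(r)^2}.$$
Since $x\mapsto r$ is decreasing, $f$ is concave if and only if the right-hand side, read as a function of $r$, is increasing on $(0,1)$. I would differentiate this expression using also the companion formula $d\E_a/dr=2(1-a)(\E_a-\K_a)/r$; a useful simplification is that $A(r):=\E_a-r'^2\K_a$ satisfies $A'(r)=2ar\K_a$. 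After clearing positive factors, the desired monotonicity reduces to the single-variable inequality
$$(2a-1)\,r'^2\,\K_a(r)^2+\E_a(r)\,\K_a(r)\ \ge\ \frac{4(1-a)}{r^2}\big(\E_a(r)-r'^2\,\K_a(r)\big)^2,\qquad 0<r<1.$$
Establishing this inequality is the main obstacle. The endpoint cases $r\to 0^+$ and $r\to 1^-$ are easy (both sides are controlled by $\K_a,\E_a\to\pi/2$ and $A\sim\frac{\pi a}{2}r^2$), so the work lies in the interior; I expect to settle it with the monotone form of l'Hôpital's rule applied to a suitable ratio, together with elementary bounds between $\K_a$ and $\E_a$.

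For the displayed chain of inequalities the engine is two algebraic identities valid under $r=1/\cosh x$, $s=1/\cosh y$. From $\cosh(x+y)=\cosh x\cosh y\,(1+\tanh x\tanh y)$ one obtains $rs/(1+r's')=1/\cosh(x+y)$, and from $\cosh(x+y)+1=2\cosh^2\frac{x+y}{2}$ one obtains $\sqrt{2rs/(1+rs+r's')}=1/\cosh\frac{x+y}{2}$. Because $f$ is concave with $f(0^+)=0$, it is subadditive, $f(x+y)\le f(x)+f(y)$; rewriting this through the first identity and clearing denominators gives the leftmost inequality. Midpoint concavity, $\tfrac12\big(f(x)+f(y)\big)\le f\big(\tfrac{x+y}{2}\big)$, rewritten through the second identity gives the middle inequality. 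Finally the rightmost inequality follows from the monotonicity of $\K_a$ alone: by Cauchy--Schwarz $rs+r's'\le 1$, so $2rs/(1+rs+r's')\ge rs$ and hence $\K_a(\sqrt{2rs/(1+rs+r's')})\ge\K_a(\sqrt{rs})$; since equality in $rs+r's'\le 1$ holds precisely when $r=s$, this yields the asserted equality case.
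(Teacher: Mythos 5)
Your route coincides with the paper's own: the same substitution $r=1/\cosh x$, the same formula $f'(x)=2(1-a)\left(\E_a(r)-r'^2\K_a(r)\right)/(r'\K_a(r)^2)$, subadditivity of a nonnegative concave function with $f(0^+)=0$ for the first inequality (the paper routes this through Lemmas \ref{pee} and \ref{pee1}, which is equivalent), midpoint concavity plus the hyperbolic identities for the second, and $rs+r's'\le 1$ with equality iff $r=s$ for the third. Your identities are in fact the correct ones and silently repair typos in the displayed statement: the middle denominator should read $\K_a\left(\sqrt{2rs/(1+rs+r's')}\right)$ and the last one $\K_a(\sqrt{rs})$, which is what the paper's proof actually establishes (consistently with Theorem \ref{muac}; with the literal $\K_a(rs)$ the asserted equality characterization would fail). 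The genuine gap sits at the heart of the argument: concavity. You correctly reduce it to the inequality
$$(2a-1)\,r'^2\K_a(r)^2+\E_a(r)\K_a(r)\ \ge\ \frac{4(1-a)}{r^2}\left(\E_a(r)-r'^2\K_a(r)\right)^2,\qquad 0<r<1,$$
but then only announce that you ``expect'' to settle it by the monotone l'H\^opital rule. As written, the theorem is therefore not proved: the single step on which the entire chain rests is left conjectural.

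The gap is easily closed, and the paper does so without differentiating the quotient at all. Write $A(r)=\E_a(r)-r'^2\K_a(r)$, so that $f'(x)=2(1-a)\,A(r)\cdot\left(r'\K_a(r)^2\right)^{-1}$. By your own observation $A'(r)=2ar\K_a(r)>0$, so $A$ is positive and increasing; and $r'\K_a(r)^2=\left(\sqrt{r'}\,\K_a(r)\right)^2$ is decreasing by Lemma \ref{1c}(3), so $1/(r'\K_a(r)^2)$ is positive and increasing. Hence $f'$, read as a function of $r$, is increasing, i.e.\ decreasing in $x$, and $f$ is concave. Your ``main obstacle'' is exactly these two facts in disguise: the left-hand side of your reduced inequality equals $2a\,r'^2\K_a(r)^2+A(r)\K_a(r)$, the term $2a\,r'^2\K_a(r)^2$ is nonnegative, and $A(r)\K_a(r)\ge 4(1-a)A(r)^2/r^2$ is precisely $r^2\K_a(r)\ge 4(1-a)A(r)$, which is the statement $\left(r'\K_a(r)^2\right)'\le 0$ --- again Lemma \ref{1c}(3). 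Replace the announced l'H\^opital argument with this factorization and your proof is complete, and identical in substance to the paper's.
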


There are several bounds for the function $\mu_a(r)$ when $a=1/2$ in \cite[Chap.5]{avvb}. In the next theorem
we give a twosided bound for $\mu_a(r)$.

\begin{theorem} \label{mymu} For $p\geq 2$ and $r\in(0,1)$,
let
$$l_p(r)=\left(\frac{\pi_p}{2}\right)^2\left(\frac{p^2-(p-1)\log r^2}
{p\,\pi_p-2\log r^{'2}}\right)\quad and \quad u_p(r)=\left(\frac{p}{2}\right)^2
\left(\frac{p\,\pi_p-2\log r^2}{p^2-(p-1)\log r^{'2}}\right)\,.$$
(1) The following inequalities hold
$$l_p(r)<\mu_a(r)<u_p(r)\,,$$
where $a=1/p$\,.\\
(2) For $p=2$ we have
$$u_2(r)< \frac{4}{\pi}\,l_2(r)\,.$$
\end{theorem}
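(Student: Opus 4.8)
The plan is to read off part (1) directly from the two-sided bound for $\K_a$ in Theorem \ref{kk}, and then to reduce part (2) to a one-variable estimate. For part (1) I would start from
$$\mu_a(r)=\frac{\pi}{2\sin(\pi/p)}\,\frac{\K_a(r^{'})}{\K_a(r)},\qquad a=\frac1p,$$
together with the identity $\frac{\pi}{2\sin(\pi/p)}=\frac{p\,\pi_p}{4}$, which is immediate from $\pi_p=2\pi/(p\sin(\pi/p))$. Writing Theorem \ref{kk} with $\log(1-r^2)=\log r^{'2}$ gives
$$\frac{\pi}{2}\Bigl(1-\tfrac{p-1}{p^2}\log r^{'2}\Bigr)<\K_a(r)<\frac{\pi}{2}\Bigl(1-\tfrac{2}{p\,\pi_p}\log r^{'2}\Bigr),$$
and replacing $r$ by $r^{'}$ (so that $1-r^{'2}=r^2$) yields the companion bounds for $\K_a(r^{'})$ with $\log r^{'2}$ replaced by $\log r^2$.

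To bound $\mu_a(r)$ from above I would use the upper bound for $\K_a(r^{'})$ and the lower bound for $\K_a(r)$; for the lower bound on $\mu_a(r)$ I would reverse these choices. Substituting into the quotient, the common factors $\pi/2$ cancel, and after multiplying through by $\frac{p\,\pi_p}{4}$ one gets, for instance,
$$\frac{p\,\pi_p}{4}\Bigl(1-\tfrac{2}{p\,\pi_p}\log r^2\Bigr)=\tfrac14\bigl(p\,\pi_p-2\log r^2\bigr),$$
so that the upper estimate collapses to $\bigl(\tfrac{p}{2}\bigr)^2(p\,\pi_p-2\log r^2)/(p^2-(p-1)\log r^{'2})=u_p(r)$, and symmetrically the lower estimate becomes $l_p(r)$. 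This step is purely algebraic; the only care needed is to keep $\log r^2$ in the numerators and $\log r^{'2}$ in the denominators.

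For part (2) I would set $p=2$, note $\pi_2=\pi$, and write $l_2,u_2$ explicitly. Since $\log r^2,\log r^{'2}<0$, all four factors below are positive, so cross-multiplying turns $u_2(r)<\frac4\pi\,l_2(r)$ into
$$4\,(\pi-\log r^2)(\pi-\log r^{'2})<\pi\,(4-\log r^2)(4-\log r^{'2}).$$
Writing $x=-\log r^2>0$ and $y=-\log r^{'2}>0$ and expanding, the difference of the two sides reduces to $(4-\pi)(4\pi-xy)$; as $4-\pi>0$, the inequality is equivalent to $xy<4\pi$.

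The one mild obstacle is this uniform bound on $xy=(\log r^2)(\log r^{'2})$. With $t=r^2\in(0,1)$ it equals $(\log t)(\log(1-t))$, which vanishes at both endpoints, is positive inside, is symmetric under $t\mapsto 1-t$, and has its unique interior critical point at $t=\tfrac12$; hence its maximum is $(\log 2)^2\approx 0.48$. Since $(\log 2)^2<4\pi$, the required bound $xy<4\pi$ holds for all $r\in(0,1)$, which finishes the proof. If one prefers to avoid locating the maximum, the crude estimate $-\log(1-t)\le t/(1-t)\le 2t$ on $(0,\tfrac12]$ gives $xy\le -2t\log t\le 2/e$ there, and symmetry extends this to $[\tfrac12,1)$, so again $xy\le 2/e<4\pi$.
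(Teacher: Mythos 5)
Your proposal is correct, and for the substance it follows the paper's own route: for part (1) the paper invokes Lemma~\ref{thm1.52} directly to get the two-sided bounds on $F(1/p,1-1/p;1;r^2)$ and $F(1/p,1-1/p;1;1-r^2)$, which are exactly the bounds you read off from Theorem~\ref{kk} (itself proved from that lemma), followed by the same algebra via $\pi/(2\sin(\pi/p))=p\,\pi_p/4$; for part (2) the paper cross-multiplies in the same way and lands on the same reduction $(\pi-4)\bigl(4\pi-\log(r^2)\log(1-r^2)\bigr)<0$, i.e.\ your condition $xy<4\pi$ (the paper's intermediate display drops a factor of $\pi$ on the second product, a typo your expansion silently corrects). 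The one place you genuinely diverge is the final estimate of $xy=\log(r^2)\log(r'^2)$: the paper establishes the exact maximum $(\log 2)^2$ at $r^2=1/2$ by showing $g(x)=x\log x-(1-x)\log(1-x)$ is convex on $(0,1/2)$ and concave on $(1/2,1)$, hence $w(x)=\log(x)\log(1-x)$ increases then decreases --- note that this monotonicity analysis is precisely the justification missing from your first argument's bare assertion of a ``unique interior critical point'', so on its own that version would have a gap. Your fallback, however, closes it completely and more cheaply: $-\log(1-t)\le t/(1-t)\le 2t$ on $(0,1/2]$ gives $xy\le -2t\log t\le 2/e$, extended to $[1/2,1)$ by the symmetry $t\mapsto 1-t$, and since any uniform bound below $4\pi$ suffices here, this crude estimate buys a fully elementary finish at the cost of not identifying the sharp constant $(\log 2)^2$.
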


%%%%%%%%%%%%%%%%%%%%%%

\bigskip
{\sc Acknowledgments.} The first author is indebted to the Graduate
School of Mathematical Analysis and its Applications for support.
The second author was, in part, supported by the Academy of Finland,
Project 2600066611. Both authors wish to acknowledge the expert help of
Dr. H. Ruskeep\"a\"a in the use of the Mathematica$^{\tiny \textregistered}$ software \cite{ru}. The authors also acknowledge the constructive suggestions of the referee.

%%%%%%%%%%%%%%%%%%%%%%%%%%%%%%
%%%%%%%%%%%%%%%%%%%%%%%%%%%%%%%
%%%%%%%%%%%%%%%%%%%%%%%%%%%%%%%
\section{Proofs of Theorems \ref{nn2},\ref{kk},\ref{kka} and \ref{mymu}. }

For easy reference we record the next two lemmas from \cite{avvb} which
have found many applications. Some of the applications are reviewed in \cite{avv5}.
The first result sometimes called the \emph{monotone l'Hospital rule}.
\begin{lemma}\label{pee}\cite[Theorem 1.25]{avvb}
For $-\infty<a<b<\infty$,
let $f,g:[a,b]\to \mathbb{R}$
be continuous on $[a,b]$, and be differentiable on
$(a,b)$. Let $g^{'}(x)\neq 0$
on $(a,b)$. If $f^{'}(x)/g^{'}(x)$ is increasing
(decreasing) on $(a,b)$, then so are
$$[f(x)-f(a)]/[g(x)-g(a)]\quad and \quad [f(x)-f(b)]/[g(x)-g(b)].$$
If $f^{'}(x)/g^{'}(x)$ is strictly monotone,
then the monotonicity in the conclusion
is also strict.
\end{lemma}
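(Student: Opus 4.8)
The plan is to reduce everything to the Cauchy Mean Value Theorem together with a sign analysis of the derivative of the quotient. First I would establish that $g$ is strictly monotone on $[a,b]$: since $g$ is differentiable on $(a,b)$ with $g'$ never vanishing, Darboux's theorem (the intermediate value property of derivatives) forces $g'$ to have constant sign throughout $(a,b)$; consequently $g$ is strictly monotone and $g(x)-g(a)\neq 0$, $g(x)-g(b)\neq 0$ for $x\in(a,b)$, so both quotients in the statement are well defined. By replacing $f$ with $-f$ if necessary it suffices to treat the case where $f'/g'$ is increasing.

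Set $F(x)=[f(x)-f(a)]/[g(x)-g(a)]$. Since $f,g$ are differentiable and the denominator does not vanish, $F$ is differentiable on $(a,b)$ with
\[
F'(x)=\frac{f'(x)[g(x)-g(a)]-[f(x)-f(a)]\,g'(x)}{[g(x)-g(a)]^2}\,.
\]
The sign of $F'(x)$ equals the sign of the numerator $N(x)$, which I would rewrite by factoring out $g'(x)$:
\[
N(x)=g'(x)\Bigl[\frac{f'(x)}{g'(x)}\,[g(x)-g(a)]-[f(x)-f(a)]\Bigr]\,.
\]
Now apply the Cauchy Mean Value Theorem on $[a,x]$: there is $\xi\in(a,x)$ with $f(x)-f(a)=\frac{f'(\xi)}{g'(\xi)}[g(x)-g(a)]$. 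Substituting gives
\[
N(x)=g'(x)\,[g(x)-g(a)]\Bigl[\frac{f'(x)}{g'(x)}-\frac{f'(\xi)}{g'(\xi)}\Bigr]\,.
\]

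The final step is the sign count. Because $g'$ has constant sign, the product $g'(x)[g(x)-g(a)]$ is \emph{positive} in both cases ($g'>0$ makes $g(x)>g(a)$, while $g'<0$ makes $g(x)<g(a)$). Since $\xi<x$ and $f'/g'$ is increasing, the bracketed difference is nonnegative, whence $N(x)\ge 0$ and $F$ is increasing on $(a,b]$. The quotient $[f(x)-f(b)]/[g(x)-g(b)]$ is handled by the mirror-image argument, applying Cauchy's theorem on $[x,b]$ and noting that now $g'(x)[g(x)-g(b)]<0$ while $\xi>x$ gives $f'(x)/g'(x)-f'(\xi)/g'(\xi)\le 0$, so the product of the two sign changes again yields $N\ge 0$. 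For the strict statement, a strictly increasing $f'/g'$ makes the bracket strictly signed for every $x$, so $F'$ keeps a strict sign and the monotonicity is strict. I do not anticipate a serious obstacle here; the one point that needs care is the sign bookkeeping so that the conclusion is uniform in the sign of $g'$, which is exactly what the factorization through $g'(x)[g(x)-g(a)]$ (respectively $g'(x)[g(x)-g(b)]$) accomplishes.
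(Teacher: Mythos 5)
Your proof is correct and complete. Note, though, that the paper contains no proof of this statement to compare against: Lemma \ref{pee} is quoted verbatim, with citation, from \cite[Theorem 1.25]{avvb} purely for later reference. Your argument --- Darboux's theorem to fix the sign of $g'$, the Cauchy mean value theorem to write $f(x)-f(a)=\frac{f'(\xi)}{g'(\xi)}[g(x)-g(a)]$, and the factorization of the numerator through $g'(x)[g(x)-g(a)]$ (which is positive regardless of the sign of $g'$, while $g'(x)[g(x)-g(b)]$ is negative, so the two sign flips in the mirror case cancel) --- is the standard proof of the monotone l'H\^opital rule and is essentially the argument of the cited monograph. Two small merits worth keeping: using the mean value theorem rather than the integral representation $f(x)-f(a)=\int_a^x (f'/g')\,g'$ avoids any integrability assumption on $f'$ and $g'$, and your strictness claim is justified since $\xi\in(a,x)$ (respectively $\xi\in(x,b)$) makes the bracketed difference strictly signed when $f'/g'$ is strictly monotone.
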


\begin{lemma}\label{pee1} \cite[Lemma 1.24]{avvb}
For $p\in(0,\infty]$, let $I=[0,p)$, and suppose that
$f,g:I\to[0,\infty)$ are functions such that $f(x)/g(x)$ is decreasing on $I\setminus\{0\}$
and $g(0)=0$ and $g(x)>0$ for $x>0$. Then
$$f(x+y)(g(x)+g(y))\leq g(x+y)(f(x)+f(y))\,,$$
for $x,y,x+y\in I$. Moreover, if the monotonicity of $f(x)/g(x)$ is strict then the above inequality is also
strict on $I\setminus\{0\}$.
\end{lemma}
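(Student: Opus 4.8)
The plan is to deduce the additive inequality from two pointwise ratio inequalities obtained directly from the monotonicity hypothesis, the only arithmetic being a cross-multiplication followed by an addition. First I would dispose of the degenerate case in which one of the arguments vanishes: if, say, $y=0$, then $g(y)=g(0)=0$ and the claimed inequality reduces to $f(x)g(x)\le g(x)(f(x)+f(0))$, i.e. to $0\le g(x)f(0)$, which holds because $g(x)>0$ and $f(0)\ge 0$. Thus it suffices to treat $x,y\in I\setminus\{0\}$ with $x+y\in I$.

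For such $x,y$ the crucial observation is that $x+y\ge x$ and $x+y\ge y$, while $x+y<p$ guarantees that all three points lie in $I\setminus\{0\}$, where $g$ is strictly positive. Since $f/g$ is decreasing on $I\setminus\{0\}$, the inequalities $x+y\ge x$ and $x+y\ge y$ give
$$\frac{f(x+y)}{g(x+y)}\le \frac{f(x)}{g(x)}\quad\text{and}\quad \frac{f(x+y)}{g(x+y)}\le \frac{f(y)}{g(y)}\,.$$
Multiplying the first by the positive quantity $g(x)g(x+y)$ and the second by $g(y)g(x+y)$ clears denominators and yields $f(x+y)g(x)\le g(x+y)f(x)$ and $f(x+y)g(y)\le g(x+y)f(y)$. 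Adding these two inequalities produces exactly
$$f(x+y)\big(g(x)+g(y)\big)\le g(x+y)\big(f(x)+f(y)\big)\,,$$
which is the assertion.

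For the strict version, suppose $f/g$ is strictly decreasing and take $x,y\in I\setminus\{0\}$. Then $x+y>x$ and $x+y>y$ strictly, so both ratio inequalities above are strict; after cross-multiplication and addition the final inequality is strict as well. The argument is essentially self-contained, and the only point requiring care is keeping track of where $g$ may vanish: since $g$ is positive on all of $I\setminus\{0\}$ and the three arguments $x$, $y$, $x+y$ all lie there in the main case, every cross-multiplication is legitimate. I do not expect a genuine obstacle here; the lemma is a short consequence of monotonicity, and the real work is confined to the bookkeeping of the degenerate endpoint case.
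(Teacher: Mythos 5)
Your proof is correct: the paper itself gives no argument for this lemma (it is quoted verbatim from \cite[Lemma 1.24]{avvb}), and your cross-multiplication of the two ratio inequalities $f(x+y)/g(x+y)\le f(x)/g(x)$ and $f(x+y)/g(x+y)\le f(y)/g(y)$, followed by addition, is exactly the standard argument in that reference, with the degenerate case $y=0$ and the strictness claim handled cleanly. Nothing to add.
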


For easy reference we recall the following lemmas from \cite{aqvv}.

\begin{lemma}\label{1c} For $a\in(0,1/2],\,K\in(1,\infty),\, r\in(0,1)$
and $s=\varphi^a_K(r)$, we have
\begin{enumerate}

\item  $f(r)=s^{'}\K_a(s)^2/(r^{'}\K_a(r)^2)$ is decreasing from
$(0,1)$ onto $(0,1)$\,,

\item  $g(r)=s\K^{'}_a(s)^2/(r\K^{'}_a(r)^2)$ is decreasing from
$(0,1)$ onto $(1,\infty)$\,,

\item the function $r^{'c}\K_a(r)$ is decreasing if and only if
$c\geq 2 a(1-a)$, in which case $r^{'c}\K_a(r)$ is decreasing from
$(0,1)$ onto $(0,\pi/2)$.
Moreover, $\sqrt{r^{'}}\K_a(r)$ is decreasing for all $a\in(0,1/2]$.
\end{enumerate}
\end{lemma}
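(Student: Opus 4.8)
The engine behind all three parts is a clean expression for $ds/dr$ where $s=\varphi^a_K(r)$. The plan is first to record the standard derivative identities for the generalized integrals,
$$\frac{d}{dr}\K_a(r)=\frac{2(1-a)}{r\,r^{'2}}\bigl(\E_a(r)-r^{'2}\K_a(r)\bigr),\qquad \frac{d}{dr}\E_a(r)=\frac{2(1-a)}{r}\bigl(\E_a(r)-\K_a(r)\bigr),$$
together with the generalized Legendre relation $\E_a\K^{'}_a+\E^{'}_a\K_a-\K_a\K^{'}_a=L_a$, a constant depending only on $a$. Differentiating $\mu_a(r)=\frac{\pi}{2\sin(\pi a)}\K^{'}_a(r)/\K_a(r)$ and collapsing the numerator by the Legendre relation yields $\mu_a^{'}(r)=-C/(r\,r^{'2}\K_a(r)^2)$ with $C=C(a)$ constant. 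Since $\mu_a(s)=\mu_a(r)/K$, implicit differentiation then gives
$$\frac{ds}{dr}=\frac{\mu_a^{'}(r)}{K\,\mu_a^{'}(s)}=\frac{1}{K}\,\frac{s\,s^{'2}\,\K_a(s)^2}{r\,r^{'2}\,\K_a(r)^2}\,,$$
where the constant $C$ cancels. Note also that $K>1$ forces $s=\mu_a^{-1}(\mu_a(r)/K)>r$ because $\mu_a$ is decreasing; this sign $s>r$ is what drives the comparisons in (1) and (2).

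I would settle part (3) first, as it is self-contained. Taking the logarithmic derivative of $r^{'c}\K_a(r)$ and clearing the positive factor $r\,r^{'2}$ shows the function is decreasing on $(0,1)$ exactly when
$$c\;\geq\;h(r):=\frac{2(1-a)}{r^2}\Bigl(\frac{\E_a(r)}{\K_a(r)}-r^{'2}\Bigr)$$
for all $r$. The Maclaurin expansions $\K_a(r)=\frac{\pi}{2}(1+a(1-a)r^2+\cdots)$ and $\E_a(r)=\frac{\pi}{2}(1-(1-a)^2r^2+\cdots)$ give $\E_a/\K_a-r^{'2}=a\,r^2+O(r^4)$, hence $h(0^+)=2a(1-a)$. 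It then remains to show $h$ is decreasing, so that $\sup_{(0,1)}h=2a(1-a)$; I would do this with the monotone l'Hospital rule (Lemma \ref{pee}) applied to $\E_a(r)-r^{'2}\K_a(r)$ over $r^2\K_a(r)$, using the derivative identities above. This pins the threshold $c=2a(1-a)$, and the endpoint values $\pi/2$ at $r=0$ and $0$ at $r=1$ follow from $\K_a(0)=\pi/2$ and the logarithmic blow-up of $\K_a$ near $1$. The ``moreover'' clause is then immediate: $2a(1-a)\leq 1/2$ for every $a\in(0,1/2]$, so $c=1/2$ always exceeds the threshold.

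For (1) and (2) the plan is to differentiate logarithmically and feed in the two formulas above. One finds $f^{'}/f=\dot s\,P(s)-P(r)$ with $P(t)=-t/t^{'2}+\frac{2}{\K_a(t)}\frac{d}{dt}\K_a(t)$; substituting the $ds/dr$ formula and writing $Q(t):=t\,t^{'2}P(t)=t^2(2h(t)-1)<0$ (negative by part (3), since $h\leq 2a(1-a)\leq 1/2$) reduces the sign of $f^{'}$ to a single comparison of an explicit auxiliary function evaluated at $s$ against its value at $r$. As $s>r$, this is settled once that auxiliary function is shown monotone, again via Lemma \ref{pee} and the monotonicity of $\E_a/\K_a$. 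The limits $f(0^+)=1$ (all factors tending to $\pi/2$ or $1$) and $f(1^-)=0$ follow from the behaviour of $\K_a$ at the endpoints. Part (2) I would link to (1) through the complementary identity (\ref{1aa}): since $\varphi^a_{1/K}(r^{'})=s^{'}$ and $\K^{'}_a=\K_a(\,\cdot^{'})$, a short computation gives $g_K(r)=f_{1/K}(r^{'})$, so (2) is the reflection $r\mapsto r^{'},\,K\mapsto 1/K$ of (1) in the complementary parameter regime, where the monotonicity direction reverses and the endpoint values become $g(0^+)=\infty$, $g(1^-)=1$, matching the range $(1,\infty)$.

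The main obstacle is twofold. First, proving the monotonicity of the auxiliary functions $h$ (for part (3)) and its analogue for $f$: the monotone l'Hospital rule reduces each to checking that a derivative quotient is monotone, but verifying that quotient's monotonicity still demands a careful sign analysis resting on the derivative identities and on the known monotonicity of $\E_a/\K_a$. Second, the endpoint value $f(1^-)=0$ (equivalently $g(0^+)=\infty$) depends on the precise asymptotic relationship between $s^{'}$ and $r^{'}$ as $r\to1$, extracted from $\mu_a(s)=\mu_a(r)/K$ and the logarithmic singularity of $\K_a$ at $1$; this asymptotic bookkeeping, and the care needed to track how the parameter regime $K\gtrless 1$ flips both the monotonicity and the limiting values, is the most delicate part of the argument.
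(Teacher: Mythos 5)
A preliminary point: the paper itself does not prove Lemma \ref{1c} at all --- it is recalled verbatim from \cite{aqvv} (``For easy reference we recall the following lemmas from \cite{aqvv}''), so there is no in-paper argument to compare against, and your attempt must stand on its own; your toolkit (the derivative formulas of Lemma \ref{1d}, the generalized Legendre relation, the monotone l'Hospital rule) is indeed the machinery of that source. For part (3) your sketch is essentially complete and correct: the criterion $c\geq h(r)$ with $h(r)=2(1-a)\bigl(\E_a(r)/\K_a(r)-r^{'2}\bigr)/r^2$ is the right reduction, $h(0^{+})=2a(1-a)$ is correct, and the proposed l'Hospital argument does close: with $f_1=\E_a-r^{'2}\K_a$ and $g_1=r^2\K_a$ one has $f_1^{'}=2ar\K_a$, so $f_1^{'}/g_1^{'}=a/\bigl(1+(1-a)(\E_a-r^{'2}\K_a)/(r^{'2}\K_a)\bigr)$, which is decreasing because $\E_a-r^{'2}\K_a$ increases from $0$ while $r^{'2}\K_a$ decreases (its derivative is at most $-2ar\K_a<0$ by $\E_a\leq\K_a$, so no circularity), and Lemma \ref{pee} applies since $f_1(0)=g_1(0)=0$. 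The reflection $g_K(r)=f_{1/K}(r^{'})$ obtained from (\ref{1aa}) is also correct and is a clean way to derive (2) from the $K<1$ analogue of (1). (Minor slip: at $a=1/2$ the exponent $1/2$ equals, rather than exceeds, the threshold $2a(1-a)$; harmless, since the criterion is $\geq$.)

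The genuine gap is the heart of (1), and hence of (2). Your computation correctly reduces the sign of $f^{'}$ to a single monotonicity claim: using the middle form of Lemma \ref{1d}(5) one gets $f^{'}/f=\bigl(U(r)-U(s)\bigr)/\bigl(rr^{'2}\K_a(r)\K^{'}_a(r)\bigr)$, where $U(t)=t^2\bigl(1-2h(t)\bigr)\K_a(t)\K^{'}_a(t)$, so $s>r$ together with $U$ increasing finishes the argument, and the same $U$ serves the reversed regime $K<1$. But you only assert this monotonicity ``again via Lemma \ref{pee} and the monotonicity of $\E_a/\K_a$'', and as stated that does not go through: $U$ is the product of the increasing positive factor $t^2\K_a-4(1-a)(\E_a-t^{'2}\K_a)$ (its derivative equals $2t(1-2a)^2\K_a+2(1-a)t(\E_a-t^{'2}\K_a)/t^{'2}>0$) and the strictly decreasing factor $\K^{'}_a(t)$, and it is not a quotient vanishing at an endpoint, so Lemma \ref{pee} has no evident foothold and the monotonicity of $\E_a/\K_a$ alone cannot decide the sign of $U^{'}$. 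Note moreover that this step carries the full content of the lemma, not a technicality: every pair $r<s$ arises as $(r,\varphi^a_K(r))$ for some $K>1$, so ``$f^{'}\leq 0$ for all $K>1$'' is \emph{equivalent} to $U$ being increasing; in \cite{aqvv} this is exactly where the preparatory chain of monotonicity lemmas does its work. Until that claim is supplied, parts (1) and (2) are correctly reduced but not proved. Similarly, the endpoint value $f(1^{-})=0$ (equivalently $g(0^{+})=\infty$) is only flagged: it needs the logarithmic asymptotics of $\mu_a$ and $\K_a$ near the endpoints (from $\mu_a(s)=\mu_a(r)/K$ one gets $s^{'}\asymp c\,r^{'K}$, whence $f\asymp K^2 s^{'}/r^{'}\to 0$), which you correctly identify as delicate but do not carry out.
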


%\cite[Theorem 4.1]{aqvv}
\begin{lemma}\label{1d} The following formulae hold for
$a\in(0,1/2],\,r\in(0,1)$ and $x,y,K\in(0,\infty)$:

\begin{enumerate}

\item $\displaystyle\frac{d\,F}{dr}=\displaystyle\frac{l\,m}{n}F(1+l,1+m;1+n;r)
;\, F=F(l,m;n;r)$\,,

\item $\displaystyle\frac{d\K_a(r)}{dr}=\displaystyle\frac{2(1-a)
(\E_a(r)-r^{'2}\K_a(r))}{rr^{'2}}$\,,

\item $\displaystyle\frac{d\E_a(r)}{dr}=\displaystyle\frac{2(a-1)(\K_a(r)-\E_a(r))}{r}$\,,

\item $\displaystyle\frac{d\mu_a(r)}{dr}=\displaystyle\frac{-\pi^2}
{4 rr^{'2}\K_a(r)^2}$\,,

\item $\displaystyle\frac{d\varphi^{a}_K(r)}{dr}
=\displaystyle\frac{ss^{'2}\K_a(s)^2}{Krr^{'2}\K_a(r)^2}
=\displaystyle\frac{ss^{'2}\K_a(s)\K^{'}_a(s)}{rr^{'2}\K_a(r)\K^{'}_a(r)}
=K\displaystyle\frac{ss^{'2}\K^{'}_a(s)^2}{rr^{'2}\K^{'}_a(r)^2}$\,,

\item $\displaystyle\frac{d\varphi^{a}_K(r)}{dK}
=\displaystyle\frac{4ss^{'2}\K_a(s)^2\mu_a(r)}{\pi^2K^2}$\,,\\
where $s=\varphi^{a}_K(r)$,

\item $\displaystyle\frac{d\eta^{a}_K(x)}{dx}
=\frac{1}{K}\left(\frac{r^{'}s\K_a(s)}{rs^{'}\K_a(r)}\right)^2
=K\left(\frac{r^{'}s\K^{'}_a(s)}{rs^{'}\K^{'}_a(r)}\right)^2
=\left(\frac{r^{'}s}{rs^{'}}\right)^2\frac{\K_a(s)\K^{'}_a(s)}
{\K_a(r)\K^{'}_a(r)}$\,,

\item $\displaystyle\frac{d\eta^{a}_K(x)}{dK}
=\frac{8\eta^a_K(x)\mu_a(r)K_a(s)^2}{\pi^2K^2}$\,,\\
in $(7)$ and $(8)$, $r=\sqrt{x/(1+x)}$ and $s=\varphi^a_K(r)$.

\end{enumerate}
\end{lemma}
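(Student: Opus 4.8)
The plan is to establish the eight formulae in the order listed, each one feeding into the next. First I would prove the differentiation rule for $F$ in part (1) by differentiating the defining power series term by term (legitimate since the series converges uniformly on compact subsets of the unit disc) and reindexing: writing $(l,k+1)=l\,(l+1,k)$, $(m,k+1)=m\,(m+1,k)$ and $(n,k+1)=n\,(n+1,k)$ turns the differentiated series into $\frac{lm}{n}\,F(1+l,1+m;1+n;r)$. With this in hand, the derivatives of $\K_a(r)=\frac{\pi}{2}F(a,1-a;1;r^2)$ and $\E_a(r)=\frac{\pi}{2}F(a-1,1-a;1;r^2)$ in parts (2) and (3) follow from the chain rule together with the Gauss contiguous relations that re-express $F(a+1,2-a;2;r^2)$ and $F(a,2-a;2;r^2)$ in terms of $F(a,1-a;1;r^2)$ and $F(a-1,1-a;1;r^2)$; each such relation is checked by comparing coefficients of $r^{2j}$, using $(a,j)-(a-1,j)=\frac{j}{a+j-1}(a,j)$ together with the identities $(2,j-1)=j!$ and $(1-a,j)=(1-a)(2-a,j-1)$.

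Next comes the key step (4). I would not compute $\frac{d\mu_a}{dr}$ by brute force but instead first record the generalized Legendre relation. Set $L(r)=\E_a(r)\K'_a(r)+\E'_a(r)\K_a(r)-\K_a(r)\K'_a(r)$; differentiating $L$ with the chain rule ($\frac{dr'}{dr}=-r/r'$) and substituting (2) and (3) makes all terms cancel, so $L$ is constant on $(0,1)$. Evaluating the constant as $r\to 0^+$ — where $\E_a(r')\K_a(r)\to\frac{\sin(\pi a)}{2(1-a)}\cdot\frac{\pi}{2}$ and the remaining product $\K'_a(r)[\E_a(r)-\K_a(r)]$ vanishes because $\E_a(r)-\K_a(r)=O(r^2)$ while $\K'_a(r)=O(\log(1/r))$ — gives $L\equiv\frac{\pi\sin(\pi a)}{4(1-a)}$. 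Then, writing $\mu_a=\frac{\pi}{2\sin(\pi a)}\,\K'_a/\K_a$ and applying the quotient rule with (2) and the chain rule, the numerator collapses via this Legendre relation to the single term $\frac{-\pi\sin(\pi a)}{2rr'^2}$, yielding $\frac{d\mu_a}{dr}=\frac{-\pi^2}{4rr'^2\K_a(r)^2}$.

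The remaining four formulae are then implicit- and chain-rule consequences. For (5) and (6) I differentiate the defining identity $K\mu_a(s)=\mu_a(r)$, with $s=\varphi^a_K(r)$, in $r$ and in $K$ respectively and solve for $ds/dr$ and $ds/dK$ using (4); the final form in (6) uses $\mu_a(s)=\mu_a(r)/K$. The two alternative forms in (5) come from the ratio identity $K=\K'_a(r)\K_a(s)/(\K_a(r)\K'_a(s))$, itself immediate from $K=\mu_a(r)/\mu_a(s)$ and the definition of $\mu_a$; this lets one trade the factor $\K_a(s)^2/(K\K_a(r)^2)$ for $\K_a(s)\K'_a(s)/(\K_a(r)\K'_a(r))$ or for $K\K'_a(s)^2/\K'_a(r)^2$. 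Finally (7) and (8) follow by the chain rule from $\eta^a_K(x)=s^2/s'^2$: one has $\frac{d\eta}{ds}=2s/s'^4$, while $r=\sqrt{x/(1+x)}$ gives $r'^2=1/(1+x)$ and $\frac{dr}{dx}=r'^4/(2r)$, so composing with (5) and (6) and simplifying produces the stated expressions, the alternative forms again coming from the same ratio identity for $K$.

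The main obstacle is step (2)--(3): the contiguous relations that convert the naive term-by-term derivative of the hypergeometric series back into a combination of $\K_a$ and $\E_a$ are the only genuinely computational part, and getting the coefficients $2(1-a)$ and $2(a-1)$ right (rather than the $(1-a)^2$ produced by the raw chain rule) requires the algebraic simplification of the shifted factorials above. Everything afterward is bookkeeping, provided the generalized Legendre relation of step (4) is in place; the limit evaluation of its constant, which hinges on the logarithmic boundary behaviour $\K'_a(r)\sim -\sin(\pi a)\log r$ and the value $\E_a(1)=\frac{\sin(\pi a)}{2(1-a)}$, is the one spot where the boundary data for $\K_a,\E_a$ recorded in the definitions must be used carefully.
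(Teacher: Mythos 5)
Your proposal is correct throughout, but note first that the paper itself contains no proof of Lemma \ref{1d}: it is recalled verbatim from [AQVV] ``for easy reference,'' so the only meaningful comparison is with the derivation in that source. Parts (1)--(3) of your argument follow the standard route (term-by-term differentiation plus contiguous relations), and your shifted-factorial bookkeeping checks out: for instance $(a,j)-(a-1,j)=j\,(a,j)/(a+j-1)$ yields exactly the relation $(1-a)\,x\,F(a,2-a;2;x)=F(a,1-a;1;x)-F(a-1,1-a;1;x)$ needed for (3), and a low-order expansion confirms the coefficients $2(1-a)$ and $2(a-1)$ in (2) and (3). The genuine divergence is in (4): in [AQVV] the generalized Legendre relation is obtained as a special case of Elliott's identity for hypergeometric functions, whereas you prove it by the classical constancy argument --- differentiate $L=\E_a\K'_a+\E'_a\K_a-\K_a\K'_a$ using (2)--(3) (the coefficients of $\K_a\K'_a$, $\E_a\K'_a$, $\E_a\E'_a$ and $\E'_a\K_a$ in $L'$ each sum to zero, so $L'\equiv 0$) and evaluate the constant at $r\to 0+$. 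Your limit evaluation is sound: $\K'_a(r)\sim\sin(\pi a)\log(1/r)$ follows from Lemma \ref{thm1.52} with $B(a,1-a)=\pi/\sin(\pi a)$, the term $\K'_a(r)\left(\E_a(r)-\K_a(r)\right)$ vanishes since $\E_a(r)-\K_a(r)=O(r^2)$, and $\E_a(1)\K_a(0)=\pi\sin(\pi a)/(4(1-a))$, which indeed reduces to the classical $\pi/2$ at $a=1/2$. What your route buys is a self-contained proof avoiding Elliott's identity, at the cost of needing the logarithmic boundary behaviour of $\K_a$; the [AQVV] route gets the constant with no asymptotics but imports a nontrivial identity. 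Parts (5)--(8) --- implicit differentiation of $K\mu_a(s)=\mu_a(r)$, the ratio identity $K=\K'_a(r)\K_a(s)/(\K_a(r)\K'_a(s))$ giving the alternative forms, and the chain-rule factors $d\eta/ds=2s/s'^4$ and $dr/dx=r'^4/(2r)$ --- all verify and coincide with the source's computation.
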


\begin{lemma}\label{thm1.52}\cite[Theorem 1.52(1)]{avvb}
For $a,b>0$, the function
$$f(x)=\frac{F(a,b;a+b;x)-1}{\log(1/(1-x))}$$
is strictly increasing from $(0,1)$ onto $(ab/(a+b),1/B(a,b))$.
\end{lemma}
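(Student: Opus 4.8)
The plan is to write $f=f_1/g_1$ with $f_1(x)=F(a,b;a+b;x)-1$ and $g_1(x)=\log(1/(1-x))=-\log(1-x)$, both of which vanish at $x=0$ and are continuous and differentiable on $[0,1)$, and then invoke the monotone l'Hospital rule (Lemma \ref{pee}). Since $f(x)=[f_1(x)-f_1(0)]/[g_1(x)-g_1(0)]$, it suffices to prove that $f_1'(x)/g_1'(x)$ is strictly increasing on $(0,1)$; the conclusion then follows on each compact subinterval $[0,c]\subset[0,1)$ and hence on all of $(0,1)$.

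For the derivative quotient I compute $g_1'(x)=1/(1-x)$, while Lemma \ref{1d}(1) (with $l=a,\ m=b,\ n=a+b$) gives $f_1'(x)=\tfrac{ab}{a+b}F(a+1,b+1;a+b+1;x)$. The key step is to simplify the product $(1-x)F(a+1,b+1;a+b+1;x)$. Applying Euler's classical transformation ${}_2F_1(\alpha,\beta;\gamma;x)=(1-x)^{\gamma-\alpha-\beta}{}_2F_1(\gamma-\alpha,\gamma-\beta;\gamma;x)$ with $\alpha=a+1,\ \beta=b+1,\ \gamma=a+b+1$ (so $\gamma-\alpha-\beta=-1$) yields
\[
(1-x)\,F(a+1,b+1;a+b+1;x)=F(b,a;a+b+1;x)=F(a,b;a+b+1;x).
\]
(This may alternatively be verified by comparing power series.) Hence $f_1'(x)/g_1'(x)=\tfrac{ab}{a+b}F(a,b;a+b+1;x)$, whose series coefficients $\tfrac{ab}{a+b}\tfrac{(a,n)(b,n)}{(a+b+1,n)\,n!}$ are all positive because $a,b>0$; therefore this quotient is strictly increasing on $(0,1)$, and Lemma \ref{pee} delivers the strict monotonicity of $f$.

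It then remains to identify the endpoint values, for which I would use l'Hospital's rule: the monotone quotient $f_1'/g_1'$ has limits at both ends, and these coincide with the limits of $f$ (note $f_1,g_1\to0$ as $x\to0^+$ and $f_1,g_1\to\infty$ as $x\to1^-$). As $x\to0^+$, $F(a,b;a+b+1;0)=1$ gives $f(0^+)=ab/(a+b)$. As $x\to1^-$, the parameter excess $(a+b+1)-a-b=1>0$, so the classical Gauss summation formula applies and gives $F(a,b;a+b+1;1)=\Gamma(a+b+1)/(\Gamma(a+1)\Gamma(b+1))$; a short reduction via $\Gamma(t+1)=t\Gamma(t)$ collapses $\tfrac{ab}{a+b}F(a,b;a+b+1;1)$ to $\Gamma(a+b)/(\Gamma(a)\Gamma(b))=1/B(a,b)$. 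The main obstacle is not any single calculation but the endpoint bookkeeping: the $x\to1^-$ limit genuinely needs Gauss's theorem (both numerator and denominator diverge, so one must first confirm that the auxiliary series $F(a,b;a+b+1;\cdot)$ converges at $1$ and then carry out the Gamma-function simplification), while the application of Lemma \ref{pee} on the open interval $(0,1)$ must be justified by exhausting $(0,1)$ with compact subintervals.
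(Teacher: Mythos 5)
Your proof is correct, but note that the paper itself states this lemma without proof, citing \cite[Theorem 1.52(1)]{avvb}; your argument --- the monotone l'Hospital rule applied to $f_1(x)=F(a,b;a+b;x)-1$ and $g_1(x)=\log(1/(1-x))$, the Euler transformation identity $(1-x)F(a+1,b+1;a+b+1;x)=F(a,b;a+b+1;x)$, and Gauss's summation at $x=1$ (valid since the parameter excess is $1>0$) --- is precisely the standard proof given in that reference. The endpoint bookkeeping you flag is handled exactly as you describe, so nothing further is needed.
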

%%%%%%%%%%%%%%%%%%%%%%%
\begin{subsec}{\rm {\bf Proof of Theorem \ref{nn2}.}} {\rm
With $G(r)=F(a,b;c;r^p)$, and $g$ as in Theorem \ref{nn2} we get by Lemma \ref{1d}(1)
$$g'(p)=
-\frac{\left(G(r)\right)^{1/p-1}}{c p^2} \left(c \,G(r) \log \left(\,
   G(r)\right)+a\, b\, p\, r^p\, F\left(a+1,b+1;c+1;r^p\right) \log (1/r)\right)$$
which is negative. Hence this implies (1),
and (2) follows from (1). For (3), write $F(r)=F(-a,b;c;r^p)$. We define
$h(p)=F(r)^{1/p}$ and get
$$h'(p)=
\frac{\left(F(r)\right)^{1/p-1}}{c p^2} \left(c \,F(r) \log \left(1/\,
   F(r)\right)+a\, b\, p\, r^p\, F\left(a+1,b+1;c+1;r^p\right) \log (1/r)\right)$$
which is positive because $F(r)\in(0,1)$. Hence $h$ is increasing in $p$, and (3) follows easily.
\, $\hfill  \square $  }
\end{subsec}

%%%%%%%%%%%%%%%%%%%%%%%%%%%%%%%%%%%%%%%%%
\begin{figure}
\includegraphics[width=12cm]{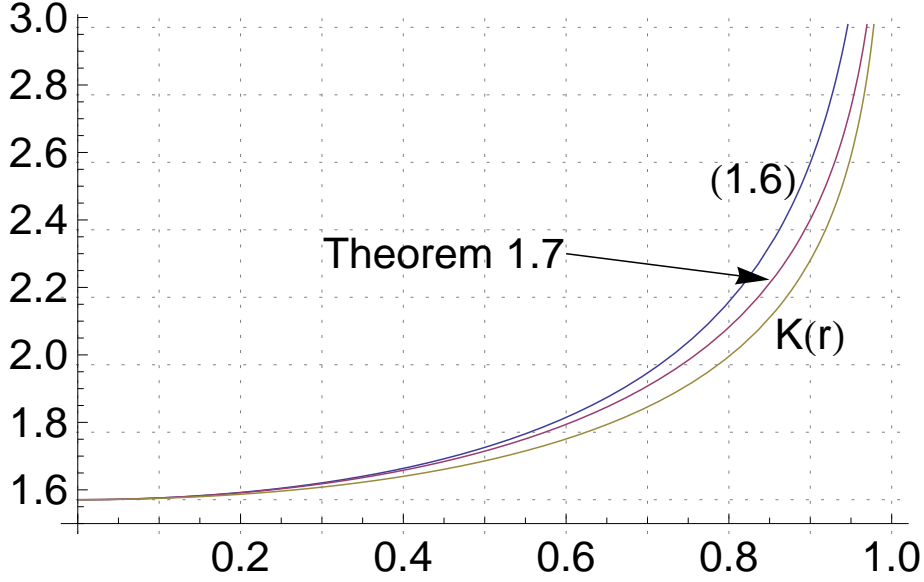}
\caption{Comparison of upper bounds given in
Theorem \ref{kk} and (\ref{aqbn}) for $\K(r)$.}
\end{figure}

\begin{subsec}{\rm {\bf Proof of Theorem \ref{kk}.}}
{\rm By the definition of ${\rm artanh_p}$, Lemma \ref{thm1.52}
and Bernoulli inequality we get
\begin{eqnarray*}
\left(\frac{{\rm artanh}_p(r)}{r}\right)^{1/2}&=&
\left(F\left(1,\frac{1}{p};1+\frac{1}{p};r^p\right)\right)^{1/2}\\
&<&\left(1-\frac{1}{p}\,\log(1- r^p)\right)^{1/2}\\
&\leq&1+\frac{1}{2p}\,\log\left(\frac{1}{1- r^p}\right)\\
&\leq&1+\frac{p-1}{p^2}\,\log\left(\frac{1}{1- r^p}\right)\\
&\leq&1-\frac{p-1}{p^2}\,\log(1- r^2)=\xi.\\
\end{eqnarray*}
Again by Lemma \ref{thm1.52} and \cite[6.1.17]{AS} we obtain
\begin{eqnarray*}
\xi&<&F\left(\frac{1}{p},1-\frac{1}{p};1;r^2\right)=\frac{2}{\pi}\K_{1/p}(r)\\
&<&1-\frac{1}{B(1/p,1-1/p)}\,\log(1- r^2)\\
&=&1-\frac{2}{p\,\pi_p}\,\log(1- r^2)\,,\\
\end{eqnarray*}
and this completes the proof. $\hfill \square$ }
\end{subsec}

\begin{subsec}{\rm {\bf Proof of Theorem \ref{kka}.}
Writing $r=1/\cosh(x)$ we have $$ \frac{dr}{dx}= - (\sinh x)/\cosh^2 x =- r\,r'  $$
and
$$
f'(x)= - \frac{\K'_a(r)}{\K_a^2(r)}\frac{dr}{dx}
=- \frac{2(1-a)}{\K_a^2(r)}  \frac{\E_a(r)- r'^2 \K_a(r)}{r\, r'^2} \,(-r\, r')
$$
 $$
 = 2(1-a)  \frac{\E_a(r)- r'^2 \K_a(r)}{r'\, \K_a(r)^2} \,,
 $$
which is positive and increasing in $r$ by Lemma \ref{1c}(3) and
therefore $f'(x)$ is decreasing in $x\,$
and $f$ is concave. Hence,
$$\frac{1}{2}(f(x)+f(y))\leq f\left(\frac{x+y}{2}\right)$$
$$\Longleftrightarrow
\frac{1}{2}\left(\frac{1}{\K_a(1/\cosh(x))}+\frac{1}{\K_a(1/\cosh(y))}\right)
\leq \frac{1}{\K_a(1/\cosh((x+y)/2))}$$
$$\Longleftrightarrow
\K_a(r)+\K_a(s)\leq
 \frac{2\K_a(r)\K_a(s)}{\K(\sqrt{rs/(1+rs+r^{'}s^{'})})}\,,$$
 using $\cosh^2((x+y)/2)=(1+rs+r^{'}s^{'})/(rs)$ and setting $s=1/\cosh(y)$.
 Clearly, $$(r-s)^2\geq 0\Longleftrightarrow1-2 rs+r^2s^2\geq 1-r^2-s^2+r^2s^2$$
 $$\Longleftrightarrow 1-rs\geq r^{'}s^{'}\Longleftrightarrow 2\geq 1+rs+r^{'}s^{'}
 \Longleftrightarrow 2rs/(1+rs+r^{'}s^{'})\geq rs\,,$$
 and the third inequality follows. Obviously, $f(0+)=0$, and $f^{'}(x)$
 is decreasing in $x$. Then $f(x)/x$ is decreasing and $f(x+y)\leq f(x)+f(y)$ by
 Lemmas \ref{pee} and \ref{pee1}, respectively. This implies the first inequality.}
\end{subsec}

%%%%%%%%%%%%%%%%%%%%5

\begin{subsec}{\rm {\bf Proof of Theorem \ref{mymu}.}}
{\rm By Lemma \ref{thm1.52} we get\\
(a)\quad$ 1-\displaystyle\frac{p-1}{p^2}\log r^2
<F\left(\displaystyle\frac{1}{p},1-\displaystyle\frac{1}
{p};1;1-r^2\right)<1-\displaystyle\frac{2}{p\,\pi_p}\log r^2 $\\
(b)\quad $1-\displaystyle\frac{p-1}{p^2}\log(1- r^2)
<F\left(\displaystyle\frac{1}{p},1-\displaystyle\frac{1}
{p};1;r^2\right)< 1-\displaystyle\frac{2}{p\,\pi_p}\log (1-r^2) $\,.\\

\noindent
By using (a), (b) and the definition of $\mu_a$,
we get (1).
The claim (2) is equivalent to
$$\frac{2(\pi-\log(r^2))}{4-\log(1-r^2)}<\frac{4}{\pi}\left(\frac{\pi}{2}\right)^2
\frac{4-\log(r^2)}{\pi-\log(1-r^2)}$$
$$\Longleftrightarrow 4(\pi-\log(r^2))(\pi-\log(1-r^2))
-(4-\log(r^2))(4-\log(1-r^2))<0,$$

$$ \Longleftrightarrow
(\pi-4)(4\pi-\log(r^2)\log(1-r^2))<(\pi-4)(4\pi-(\log(2))^2)<0\,.$$
For the second last inequality we define $w(x)=\log(x)\log(1-x)$, and get
$$w^{'}(x)=\frac{(1-x)\log(1-x)-x\log(x)}{x(1-x)}=\frac{-g(x)}{x(1-x)}\,,$$ and see that
$g(x)=x\log(x)-(1-x)\log(1-x)$ is convex on $(0,1/2)$ and
 concave on $(1/2,1)$. This implies that $g(x)<0$
 for $x\in(0,1/2)$ and $g(x)>0$ for $x\in(1/2,1)$.
 Therefore $w$ is increasing in $(0,1/2)$ and decreasing in $(1/2,1)$.
 Hence the function $w$ has a global maximum at $x=1/2$ and this completes the proof.

$\hfill \square$ }
\end{subsec}

One can obtain the following inequalities by using the
proof of Theorem \ref{mymu}:
$$\frac{p\,\pi_p}{2 \pi} \frac{\K_a(r)}{(1-(2/(p\,\pi_p))\log\, r^2)}
\leq\mu_a(r^{'})
\leq  \frac{p\,\pi_p}{2 \pi}  \frac{ \K_a(r)}{(1-((p-1)/p ) \log\, r^2)}\,,$$
with $a=1/p$ and $p\geq 2$.

\begin{lemma}\label{n2} The following inequalities hold for all
$r,s\in(0,1)$ and $a\in(0,1/2]$;
\begin{enumerate}
\item $\K_a(r\,s)\leq \sqrt{\K_a(r^2)\K_a(s^2)}\leq \frac{2}{\pi} \K_a(r)\K_a(s)$\,,\\
\item $\frac{2}{\pi} \E_a(r)\E_a(s)\leq
\sqrt{\E_a(r^2)\E_a(s^2)}\leq \E_a(rs)\,.$
\end{enumerate}
\end{lemma}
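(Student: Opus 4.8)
The plan is to pass from the generalized integrals to the underlying hypergeometric series and recognize each of the four inequalities as a statement about multiplicative (geometric) convexity or concavity. Writing $f(x)=F(a,1-a;1;x)$ and $g(x)=F(a-1,1-a;1;x)$, we have $\K_a(r)=\tfrac{\pi}{2}f(r^2)$ and $\E_a(r)=\tfrac{\pi}{2}g(r^2)$. Substituting, cancelling the factors of $\pi/2$, and then setting $u=r^2,\ v=s^2\in(0,1)$, the two chains collapse to
\begin{equation*}
f(uv)\le\sqrt{f(u^2)f(v^2)}\le f(u)f(v),\qquad g(u)g(v)\le\sqrt{g(u^2)g(v^2)}\le g(uv).
\end{equation*}
Thus part (1) asserts geometric convexity of $f$ together with a submultiplicativity bound, and part (2) asserts the reverse inequalities for $g$.

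First I would record the elementary monotonicity facts. Since $a\in(0,1/2]$, every coefficient of $f$ is positive, so $f$ is increasing with $f\ge f(0)=1$; since $(a-1,n)<0$ and $(1-a,n)>0$ for $n\ge1$, every nonconstant coefficient of $g$ is negative, so $g$ is decreasing with $0<g\le g(0)=1$, and moreover $g'<0$ and $g''<0$ throughout $(0,1)$. These give the two easy inequalities at once: from $u^2\le u$ and $f\ge1$ we get $f(u^2)\le f(u)\le f(u)^2$, whence $\sqrt{f(u^2)f(v^2)}\le f(u)f(v)$; dually, from $u^2\le u$ and $g\le1$ we get $g(u^2)\ge g(u)\ge g(u)^2$, whence $g(u)g(v)\le\sqrt{g(u^2)g(v^2)}$.

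For the left inequality in part (1), namely $f(uv)^2\le f(u^2)f(v^2)$, I would apply the Cauchy--Schwarz inequality directly to the power series: writing $f(x)=\sum_n c_n x^n$ with $c_n>0$,
\begin{equation*}
f(uv)=\sum_n\bigl(c_n u^{2n}\bigr)^{1/2}\bigl(c_n v^{2n}\bigr)^{1/2}\le\Bigl(\sum_n c_n u^{2n}\Bigr)^{1/2}\Bigl(\sum_n c_n v^{2n}\Bigr)^{1/2}=\sqrt{f(u^2)f(v^2)}.
\end{equation*}

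The remaining and genuinely harder inequality is the right-hand one in part (2), $g(u^2)g(v^2)\le g(uv)^2$, i.e. the multiplicative concavity of $g$. Here Cauchy--Schwarz points the wrong way and the negative coefficients forbid the same trick, so I would instead verify that $G(t)=\log g(e^t)$ is concave. Using Lemma \ref{1d}(1) to differentiate, a direct computation shows that $G''(t)$ has the same sign as $g\,g'+x\bigl(g''g-(g')^2\bigr)$ with $x=e^t>0$, and the point is that each of the three summands $gg'$, $xg''g$, $-x(g')^2$ is negative by the sign data $g>0>g'$, $g''<0$ collected above. Hence $G''<0$, $G$ is concave, and concavity applied at the two points $2p,2q$ with equal weights gives $G(2p)+G(2q)\le2G(p+q)$, which is exactly $g(u^2)g(v^2)\le g(uv)^2$ after $u=e^p,\ v=e^q$. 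The main obstacle is precisely this reverse direction: once the sign pattern of $g',g''$ is in hand the rest is routine, so the crux is establishing those signs from the shifted-factorial structure of the coefficients of $F(a-1,1-a;1;\cdot)$.
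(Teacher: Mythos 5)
Your proof is correct, and it takes a genuinely different route from the paper's. The paper never descends to the power series: it proves the inner inequalities $\K_a(rs)\le\sqrt{\K_a(r^2)\K_a(s^2)}$ and $\sqrt{\E_a(r^2)\E_a(s^2)}\le\E_a(rs)$ by showing that $x\mapsto\log\K_a(e^{-x})$ is convex and $x\mapsto\log\E_a(e^{-x})$ is concave, differentiating via Lemma \ref{1d}(2),(3) and quoting sign and monotonicity facts from \cite[Lemma 5.4(1)]{aqvv} and \cite[Theorem 4.1(3), Lemma 5.2(3)]{aqvv}; the outer inequalities it obtains as the $p=2$ cases of Theorem \ref{nn2}(2),(3), which themselves rest on a derivative-in-$p$ computation. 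Your argument replaces all of this external input by coefficient-level reasoning, and each step checks out: the reduction to $f(uv)\le\sqrt{f(u^2)f(v^2)}\le f(u)f(v)$ and $g(u)g(v)\le\sqrt{g(u^2)g(v^2)}\le g(uv)$ is an exact rescaling by $\pi/2$; Cauchy--Schwarz on the positive coefficients of $f$ gives the left inequality of (1) (and in fact geometric convexity of any power series with nonnegative coefficients, so this piece is more general and fully self-contained); your identity $G''(t)=\bigl(x/g^2\bigr)\bigl(gg'+x(g''g-(g')^2)\bigr)$ with $x=e^t$ is correct, and the sign pattern $g>0>g'$, $g''<0$ (valid since $(a-1,n)<0$ and $(1-a,n)>0$ for $n\ge1$, with $c_1=-(1-a)^2<0$ and $c_2<0$ strictly because $a\le 1/2$) does yield $G''<0$, which is equivalent to the paper's concavity of $\log\E_a(e^{-x})$; and the outer inequalities follow from $f\ge1$, $g\le1$ and monotonicity alone, strictly weaker input than Theorem \ref{nn2}. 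One small point you should make explicit: you use $g>0$ twice (to get $g(u)\ge g(u)^2$ and to form $\log g$), and monotone decrease of $g$ alone does not give positivity; it follows in one line from the endpoint value recorded in the paper's definitions, $\E_a(1)=\sin(\pi a)/(2(1-a))>0$, since $g(x)=(2/\pi)\E_a(\sqrt{x})$ is decreasing with $g(1)=\sin(\pi a)/((1-a)\pi)>0$ (Gauss's evaluation, as $c-a-b=1>0$ here). With that line added, your proof is complete and arguably more elementary than the paper's, at the cost of being special to the series representation rather than reusing the derivative formulas that the paper exploits throughout.
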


\begin{proof} Define $f(x)=\log(\K_a(e^{-x})),\,x>0$. We get by Lemma \ref{1d}(2)
$$f{'}(x)=-2(1-a)\frac{\E_a(r)-r^{'2}\K_a(r)}{r^{'2}\K_a(r)}\,,\quad r= e^{-x}\,,$$
and this is negative by the fact that
$h(r)=\E_a(r)-r^{'2}\K_a(r)>0$ and  decreasing in $r$
by \cite[Lemma 5.4(1)]{aqvv} and the fact that $h$ is increasing
($h{'}(r)=2 ar\K_a(r)>0$). Therefore $f{'}(x)$ is increasing in $x$,
hence $f$ is convex, and this implies the first inequality of part one.
The second inequality follows
from Theorem \ref{nn2}(2).

The first inequality of part two follows from the Theorem \ref{nn2}(3),
 for the second inequality
we define $g(x)=\log(\E_a(z)),\,z=e^{-x},\,x>0$, and get Lemma \ref{1d}(3)

$$g^{'}(x)=2(1-a)\frac{(\K_a(z)-\E_a(z))}{\E_a(z)}\,,$$
which is positive and increasing in $z$ by
\cite[Theorem 4.1(3), Lemma 5.2(3)]{aqvv}, hence $g^{'}(x)$
is decreasing in $x$, therefore $g$ is increasing and concave.
This implies that
$$\log(\E_a(e^{-(x+y)/2}))\geq (\log(\E_a(e^{-x}))+\log(\E_a(e^{-y})))/2\,,$$
and the second inequality follows if we set $r=e^{-x/2}$  and $s=e^{-y/2}$.
\end{proof}
%%%%%%%%%%%%%%%%%%%%%%%
%%%%%%%%%%%%%%%%%%%%%%%
%%%%%%%%%%%%%%%%%%%%%%%
%%%%%%%%%%%%%%%%%%%%%%%
%%%%%%%%%%%%%%%%%%%%%%%
%%%%%%%%%%%%%%%%%%%%%%%

\section{Few remarks on special functions}

In this section we generalize some results from \cite[Chapter 10]{avvb}.

\begin{theorem} The function $\mu^{-1}_a(y)$ has exactly
 one inflection point and it is log-concave
from $(0,\infty)$ onto $(0,1)$. In particular,
$$(\mu^{-1}_a(x))^{p}(\mu^{-1}_a(y))^q\leq\mu^{-1}_a(p\,x+q\,y)$$
for $p,\,q,\,x,\,y>0$ with $p+q=1$.
\end{theorem}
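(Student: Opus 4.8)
The plan is to treat the two assertions—log-concavity and the single inflection point—separately, in both cases transferring the derivatives of $\psi:=\mu^{-1}_a$ to the variable $r=\psi(y)\in(0,1)$ by the inverse function theorem together with Lemma \ref{1d}(4). (That $\psi$ is a decreasing homeomorphism onto $(0,1)$ is automatic, since it is the inverse of the decreasing homeomorphism $\mu_a$.)

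First, for log-concavity I would set $y=\mu_a(r)$ and use $\mu_a'(r)=-\pi^2/(4rr'^2\K_a(r)^2)$ to get $\psi'(y)=-4rr'^2\K_a(r)^2/\pi^2$, hence
\[
(\log\psi)'(y)=\frac{\psi'(y)}{\psi(y)}=-\frac{4}{\pi^2}\bigl(r'\K_a(r)\bigr)^2,\qquad r=\psi(y).
\]
Since $\psi$ is decreasing, $r$ decreases as $y$ increases; and by Lemma \ref{1c}(3) the function $r'\K_a(r)=r'^{c}\K_a(r)$ with $c=1\ge 2a(1-a)$ is decreasing in $r$ for every $a\in(0,1/2]$. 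Composing, $(r'\K_a(r))^2$ increases with $y$, so $(\log\psi)'$ decreases, i.e. $\log\psi$ is concave. The displayed inequality is then exactly the concavity of $\log\psi$ against the weights $p,q$ with $p+q=1$, after exponentiating.

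Second, for the inflection point I would differentiate once more. Writing $G(r)=rr'^2\K_a(r)^2>0$, so that $\psi'(y)=-4G(r)/\pi^2$, the chain rule gives $\psi''(y)=(16/\pi^4)G(r)G'(r)$, whence the sign of $\psi''$ equals the sign of $G'(r)$. Using Lemma \ref{1d}(2) I would compute
\[
G'(r)=\K_a(r)^2\,\Phi(r),\qquad \Phi(r)=(4a-3)+(1-4a)r^2+4(1-a)\frac{\E_a(r)}{\K_a(r)},
\]
and read off the endpoint values $\Phi(0^+)=1>0$ and $\Phi(1^-)=-2<0$ from $\E_a(r)/\K_a(r)\to1$ as $r\to0$ and $\to0$ as $r\to1$. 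It then suffices to show $\Phi$ is strictly decreasing: this forces a single sign change at some $r_0$, giving $\psi$ concave on $(0,\mu_a(r_0))$ and convex on $(\mu_a(r_0),\infty)$, i.e. exactly one inflection point.

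The place that needs real care—and where I expect the bulk of the work—is the monotonicity of $\Phi$. Putting $q=\E_a/\K_a$ and combining Lemma \ref{1d}(2),(3), I would obtain $q'(r)=\tfrac{2(1-a)}{r}\bigl(2q-1-q^2/r'^2\bigr)$, and the decisive step is the completion-of-squares identity
\[
2q-1-\frac{q^2}{r'^2}=-r^2-\frac{(q-r'^2)^2}{r'^2}\le 0.
\]
Substituting this into $r\,\Phi'(r)=2(1-4a)r^2+8(1-a)^2\bigl(2q-1-q^2/r'^2\bigr)$ yields
\[
r\,\Phi'(r)=\bigl(-6+8a-8a^2\bigr)r^2-\frac{8(1-a)^2(q-r'^2)^2}{r'^2},
\]
and since the quadratic $8a^2-8a+6$ has negative discriminant its negative, $-6+8a-8a^2$, is strictly negative for every $a$; both terms are then $\le 0$, so $\Phi'<0$ throughout $(0,1)$. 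This settles the strict single crossing and completes the proof. The only genuinely error-prone part is the derivative bookkeeping—the factors $(1-a)$ and the $r'^2$ denominators—so I would verify those computations with particular care.
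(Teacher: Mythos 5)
Your proposal is correct, and I verified the computations: with $q=\E_a/\K_a$ the derivative formula $q'(r)=\frac{2(1-a)}{r}\bigl(2q-1-q^2/r'^2\bigr)$ follows from Lemma \ref{1d}(2),(3), the identity $2q-1-q^2/r'^2=-r^2-(q-r'^2)^2/r'^2$ checks out, and $-6+8a-8a^2<0$ for all $a$ since $8a^2-8a+6$ has discriminant $-128$. The log-concavity half is essentially identical to the paper's: both differentiate $\log\mu_a^{-1}$ via Lemma \ref{1d}(4) and invoke Lemma \ref{1c}(3) with $c=1\geq 2a(1-a)$. For the inflection point, however, you go genuinely further than the paper. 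The paper computes $\frac{d^2s}{dy^2}=\frac{16}{\pi^4}ss'^2\K_a(s)^3\bigl(2\E_a(s)-(1+s^2)\K_a(s)\bigr)$ and then merely asserts ("We see that\ldots") that the factor $2\E_a(s)-(1+s^2)\K_a(s)$ is increasing in $y$ from $-\infty$ to $\pi/2$; no proof of this monotonicity is given. Moreover, the paper's displayed factor is correct only at $a=1/2$: redoing the differentiation for general $a$ (keeping the $(1-a)$ factors from Lemma \ref{1d}(2)) yields exactly your $G'(r)=\K_a(r)^2\Phi(r)$ with $\Phi(r)=(4a-3)+(1-4a)r^2+4(1-a)\E_a(r)/\K_a(r)$, which reduces to the paper's expression precisely when $a=1/2$. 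So your argument both repairs a computational slip in the published proof and supplies the missing monotonicity proof via the completion-of-squares estimate $q'<0$, making the single sign change of $\psi''$, and hence the uniqueness of the inflection point, fully rigorous for all $a\in(0,1/2]$.
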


\begin{proof} Letting $s=\mu^{-1}_a(y)$ we see that $\mu_a(s)=y$.
By Lemma \ref{1d}(4) we get
$$\frac{ds}{dy}=-\frac{4}{\pi^2}ss^{'2}\K_a(s)^2\,,$$
\begin{eqnarray*}
\frac{d^2s}{dy^2}&=&-\frac{ds}{dy}\frac{4}{\pi^2}
(s^{'2}\K_a(s)^2-2s^{2}\K_a(s)^2+2\K_a(s)^2(\E_a(s)-s^{'2}\K_a(s)))\\
&=&\frac{16}{\pi^4}ss^{'2}\K_a(s)^3(2\E_a(s)-(1+s^2)\K_a(s))\,.
\end{eqnarray*}

We see that $2\E_a(s)-(1+s^2)\K_a(s)$ is increasing from
$(0,\infty)$ onto $(-\infty,\pi/2)$
as a function of $y$. Hence $d(\mu^{-1}_a(y_0))/dy^2=0$, for
$y_0\in(0,\infty)$ and $\mu^{-1}_a$
has exactly one inflection point. Let $f(y)=\log(\mu^{-1}_a(y))=\log\,s$. Then
$$f^{'}(y)=-\frac{4}{\pi^2} s^{'2}\K_a(s)^2\,$$
which is decreasing as a function of $y$, by Lemma \ref{1c}(3),
hence $\mu^{-1}_a$ is log-concave. This completes the
proof.
\end{proof}

\begin{corollary}\label{b} $(1)$ For $K\geq 1$, the function
$f(r)=(\log \, \varphi^a_K(r))/\log r$ is strictly decreasing
from $(0,1)$ onto $(0,1/K)$.\\
$(2)$ For $K\geq 1\,,r\in(0,1)$, the function
$g(p)=\varphi^a_K(r^p)^{1/p}$
is decreasing from
$(0,\infty)$ onto $(r^{1/K},1)$. In particular,
$$r^{p/K}\leq \varphi^a_K(r^p)\leq \varphi^a_K(r)^p,\,\,p\geq1\,,$$
and
$$\varphi^a_K(r^p)\geq \varphi^a_K(r)^p,\,\quad 0<p\leq1\,.$$
\end{corollary}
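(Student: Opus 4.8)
The plan is to prove (1) first and then read off (2) from it, and throughout I take $K>1$ (for $K=1$ one has $\varphi^a_1=\mathrm{id}$ and $f\equiv1$, the degenerate boundary case; note that Lemma \ref{1c} is itself stated only for $K\in(1,\infty)$). For (1) I would apply the monotone l'Hospital rule (Lemma \ref{pee}) to the pair $F(r)=\log\varphi^a_K(r)$ and $G(r)=\log r$, both of which vanish at $r=1$, so that $f(r)=F(r)/G(r)$. Writing $s=\varphi^a_K(r)$ and using Lemma \ref{1d}(5) one gets
$$\frac{F'(r)}{G'(r)}=\frac{r}{s}\frac{ds}{dr}=\frac{1}{K}\,Q(r),\qquad Q(r)=\frac{s'^2\K_a(s)^2}{r'^2\K_a(r)^2},$$
so that everything reduces to showing $Q$ is strictly decreasing on $(0,1)$; then $f$ is decreasing by Lemma \ref{pee}.

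The key step is the factorization $Q(r)=L_1(r)\cdot(s'/r')$ with $L_1(r)=s'\K_a(s)^2/(r'\K_a(r)^2)$. By Lemma \ref{1c}(1), $L_1$ is decreasing with values in $(0,1)$, so it remains to show $s'/r'$ is decreasing. Since $s\ge r$ for $K\ge1$, a direct computation using Lemma \ref{1d}(5) gives
$$\frac{d}{dr}\log\frac{s'}{r'}=\frac{1}{rr'^2}\Big(r^2-\frac{s^2\K_a(s)^2}{K\K_a(r)^2}\Big),$$
which is negative exactly when $\big(s\K_a(s)/(r\K_a(r))\big)^2>K$. This inequality is where the real work lies, and the trick is to trade $\K_a$ for $\K_a'$ via the modular relation $\mu_a(s)=\mu_a(r)/K$, which yields $\K_a(s)/\K_a(r)=K\,\K_a'(s)/\K_a'(r)$ and hence
$$\Big(\frac{s\K_a(s)}{r\K_a(r)}\Big)^2=K^2\,\frac{s}{r}\,\frac{s\K_a'(s)^2}{r\K_a'(r)^2}>K^2\ge K,$$
using $s/r\ge1$ together with Lemma \ref{1c}(2) (which gives $s\K_a'(s)^2/(r\K_a'(r)^2)>1$). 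Thus $s'/r'$, and hence $Q$, is strictly decreasing, proving monotonicity of $f$. The endpoint values $f(0^+)=1/K$ and $f(1^-)=0$ then follow by ordinary l'Hospital at the endpoints, since $\lim_{r\to0}Q=1$ and $\lim_{r\to1}Q=0$ are read off from the ranges in Lemma \ref{1c}(1) together with the bound $s'/r'\le1$.

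For (2) I would reduce to (1) through the identity $\log g(p)=(\log r)\,f(r^p)$, which comes directly from $f(r^p)=\log\varphi^a_K(r^p)/(p\log r)$. Because $\log r<0$ and $r^p$ is decreasing in $p$, the decreasing monotonicity of $f$ from (1) makes $f(r^p)$ increasing in $p$, so $\log g(p)$ is decreasing and $g$ is decreasing. The range $(r^{1/K},1)$ comes from $f(r^p)\to f(1^+{}^-)=0$ as $p\to0^+$ and $f(r^p)\to f(0^+)=1/K$ as $p\to\infty$, and the three displayed inequalities are immediate from the monotonicity of $g$ and the value $g(1)=\varphi^a_K(r)$.

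The main obstacle is the monotonicity of $Q$: once the factorization $Q=L_1\cdot(s'/r')$ is available, the only genuinely new estimate needed is $\big(s\K_a(s)/(r\K_a(r))\big)^2>K$, and the decisive idea is to convert this, via the modular relation, into a statement about $\K_a'$ so that Lemma \ref{1c}(2) applies. Everything else is bookkeeping with Lemmas \ref{pee}, \ref{1c} and \ref{1d}.
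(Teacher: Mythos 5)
Your proof is correct, and it takes a genuinely different route from the paper's. The paper proves (1) by direct differentiation: using Lemma \ref{1d}(5) it computes $f'(r)$ for $f(r)=\log\varphi^a_K(r)/\log r$, rewrites $r(\log r)^2 f'(r)$ as $s'^2\K_a(s)\K'_a(s)$ times the difference of $\log x/(x'^2\K_a(x)\K'_a(x))$ evaluated at $x=r$ and at $x=s$, and settles the sign by an appeal to Lemma \ref{1c}(3). You instead run the monotone l'Hospital rule (Lemma \ref{pee}) on $F(r)=\log\varphi^a_K(r)$ and $G(r)=\log r$, which both vanish at $r=1$, reducing (1) to strict monotonicity of $Q(r)=s'^2\K_a(s)^2/(r'^2\K_a(r)^2)$; your factorization $Q=L_1\cdot(s'/r')$ disposes of $L_1$ by Lemma \ref{1c}(1), your derivative formula for $\log(s'/r')$ checks out, and the key estimate $\bigl(s\K_a(s)/(r\K_a(r))\bigr)^2=K^2\,(s/r)\,s\K'_a(s)^2/(r\K'_a(r)^2)>K^2\geq K$ is valid, since $\K_a(s)/\K_a(r)=K\,\K'_a(s)/\K'_a(r)$ follows from $\mu_a(s)=\mu_a(r)/K$ (you could equally have read it off the chain of equalities in Lemma \ref{1d}(5)) and Lemma \ref{1c}(2) supplies $s\K'_a(s)^2/(r\K'_a(r)^2)>1$ while $s>r$ for $K>1$. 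The endpoint values come from l'Hospital plus Lemma \ref{1c}(1) in both proofs, and your reduction of (2) to (1) via $\log g(p)=f(r^p)\log r$ is exactly the paper's. As for what each approach buys: the paper's computation is shorter, but its sign argument is compressed into a terse citation of Lemma \ref{1c}(3) that the reader must unpack (the needed monotonicity of $x\mapsto\log(1/x)/(x'^2\K_a(x)\K'_a(x))$ is nowhere stated explicitly), whereas your argument is longer but every step invokes a stated lemma verbatim, at the price of one extra derivative computation. Two minor points, neither a gap: your restriction to $K>1$ is a legitimate refinement, since for $K=1$ one has $f\equiv 1$ and the statement as printed can only hold strictly for $K>1$ (consistent with Lemma \ref{1c} being stated for $K\in(1,\infty)$); and at $r\to 0^+$ the limit $Q\to 1$ requires $s'/r'\to 1$, which is immediate from $s,r\to 0$ rather than from the bound $s'/r'\leq 1$ you cite --- a phrasing slip without consequence.
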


\begin{proof} Let $s=\varphi^a_K(r)$. By Lemma \ref{1d}(5) we get
$$f^{'}(r)=\frac{rss^{'2}}{srr^{'2}}
\frac{\K_a(s)\K^{'}_a(s)}{\K_a(r)\K^{'}_a(r)}
\log r-\log s,$$
and this is equivalent to
$$r(\log r)^2f^{'}(r)=s^{'2}\K_a(s)\K^{'}_a(s)
\left(\frac{\log r}{r^{'2}\K_a(r)\K^{'}_a(r)}
-\frac{\log s}{s^{'2}\K_a(s)\K^{'}_a(s)}\right),$$
which is negative by Lemma \ref{1c}(3). The limiting
 values follow from l'H\^opital Rule and
Lemma \ref{1c}(1). We observe that
$$\log g(p)=\left(\frac{\log \varphi^a_K(r^p)}
{\log(r^p)}\right)\log r\,,$$
and (2) follows from (1).
\end{proof}

\begin{lemma}\label{varph} For $0<a\leq 1/2, \,K,p\geq 1$ and $r,s\in(0,1)$,
the following inequalities hold
$$\frac{\sqrt[p]{\varphi_K^a(r^p)}+\sqrt[p]{\varphi_K^a(s^p)}}
{1+\sqrt[p]{\varphi_K^a(r^p)\varphi_K^a(s^p)}}\leq
\frac{\varphi_K^a(r)+\varphi_K^a(s)}{1+\varphi_K^a(r)
\varphi_K^a(s)}\leq
\frac{\varphi_K^a(\sqrt[p]{r})^p+\varphi_K^a(\sqrt[p]{s})^p}
{1+(\varphi_K^a(\sqrt[p]{r})\varphi_K^a(\sqrt[p]{s}))^p}\,.$$
\end{lemma}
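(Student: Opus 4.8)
The plan is to recognize that all three fractions in the statement are one and the same two-variable function $T(u,v)=(u+v)/(1+uv)$, evaluated at argument pairs that are controlled by Corollary \ref{b}(2). First I would record the elementary fact that, for fixed $v\in(0,1)$, the map $u\mapsto T(u,v)$ is strictly increasing on $(0,1)$, since
$$\frac{\partial T}{\partial u}=\frac{1-v^2}{(1+uv)^2}>0,$$
and by symmetry $T$ is increasing in $v$ as well; moreover $T$ maps $(0,1)^2$ into $(0,1)$, so the expression is always legitimate.

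Next I would rewrite each fraction in terms of the function $g(t)=\varphi^a_K(r^t)^{1/t}$ appearing in Corollary \ref{b}(2). Writing $u_t=\varphi^a_K(r^t)^{1/t}$ and $v_t=\varphi^a_K(s^t)^{1/t}$, one checks directly that $\sqrt[p]{\varphi^a_K(r^p)}=u_p$, that $\varphi^a_K(r)=u_1$, and that $\varphi^a_K(\sqrt[p]{r})^p=\varphi^a_K(r^{1/p})^p=u_{1/p}$, with the analogous identities for $s$. The product terms in the denominators factor in exactly the same way, for instance $\sqrt[p]{\varphi^a_K(r^p)\varphi^a_K(s^p)}=u_p\,v_p$ and $\big(\varphi^a_K(\sqrt[p]{r})\varphi^a_K(\sqrt[p]{s})\big)^p=u_{1/p}\,v_{1/p}$. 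Consequently the left, middle, and right members of the asserted chain are precisely $T(u_p,v_p)$, $T(u_1,v_1)$, and $T(u_{1/p},v_{1/p})$.

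The final step invokes Corollary \ref{b}(2) itself, which asserts that $t\mapsto\varphi^a_K(r^t)^{1/t}$ is decreasing on $(0,\infty)$ with all values lying in $(0,1)$. Since $p\geq 1$ forces $1/p\leq 1\leq p$, this monotonicity gives $u_p\leq u_1\leq u_{1/p}$ and $v_p\leq v_1\leq v_{1/p}$. Feeding these three ordered pairs into the coordinatewise monotonicity of $T$ yields
$$T(u_p,v_p)\leq T(u_1,v_1)\leq T(u_{1/p},v_{1/p}),$$
which is exactly the claimed double inequality.

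I do not expect a genuine analytic obstacle here: the monotonicity of $T$ is a one-line derivative computation and the monotonicity of $g$ is supplied by Corollary \ref{b}(2). The one step that requires care is the bookkeeping in the second paragraph, namely correctly matching the various $p$th roots and $p$th powers to the single decreasing function $g$, and confirming that the product terms factor so that numerator and denominator are simultaneously expressed through $u_t$ and $v_t$. Once that identification is in place, the conclusion is immediate.
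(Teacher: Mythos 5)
Your argument is correct and is essentially the paper's own proof: both rest on Corollary \ref{b}(2) to obtain $\varphi_K^a(r^p)^{1/p}\leq \varphi_K^a(r)\leq \varphi_K^a(r^{1/p})^p$ (and likewise for $s$), and then on the fact that $T(u,v)=(u+v)/(1+uv)$ is increasing in each variable on $(0,1)^2$. The only cosmetic difference is how that last fact is verified: the paper passes through the addition formula ${\rm artanh}\,u+{\rm artanh}\,v={\rm artanh}\bigl(T(u,v)\bigr)$ and the monotonicity of ${\rm artanh}$ and $\tanh$, whereas you compute $\partial T/\partial u=(1-v^2)/(1+uv)^2>0$ directly.
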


\begin{proof} It follows from the Corollary \ref{b}(2) that
$$\varphi_K^a(r^p)^{1/p}\leq \varphi_K^a(r)\,.$$
 From the fact that
${\rm artanh}$ is increasing, we conclude that
$${\rm artanh}(\varphi_K^a(r^p)^{1/p})+{\rm artanh}(\varphi_K^a(s^p)^{1/p})\leq
{\rm artanh}(\varphi_K^a(r))+{\rm artanh}(\varphi_K^a(s))\,.$$
This is equivalent to
$${\rm artanh}\left(\frac{\varphi_K^a(r^p)^{1/p}+\varphi_K^a(s^p)^{1/p}}
{1+(\varphi_K^a(r^p)+\varphi_K^a(s^p))^{1/p}}\right)\leq
{\rm artanh}\left(\frac{\varphi_K^a(r)+\varphi_K^a(s)}
{1+(\varphi_K^a(r)+\varphi_K^a(s))}\right)\,,$$
and the first inequality holds. Similarly, the second inequality follows from
$\varphi_K^a(r)\leq \varphi_K^a(r^{1/p})^p$.
\end{proof}

For $0<a\leq 1,\,K\geq 1$ and $r,s\in(0,1)$, the following inequality
\begin{equation}\label{aq1}
\varphi_K^a\left(\frac{r+s}{1+rs}\right)\leq
\frac{\varphi_K^a(r)+\varphi_K^a(s)}{1+\varphi_K^a(r)
\varphi_K^a(s)}
\end{equation}
is given in \cite[Remark 6.17]{aqvv}.

\begin{figure}
\includegraphics[width=12cm]{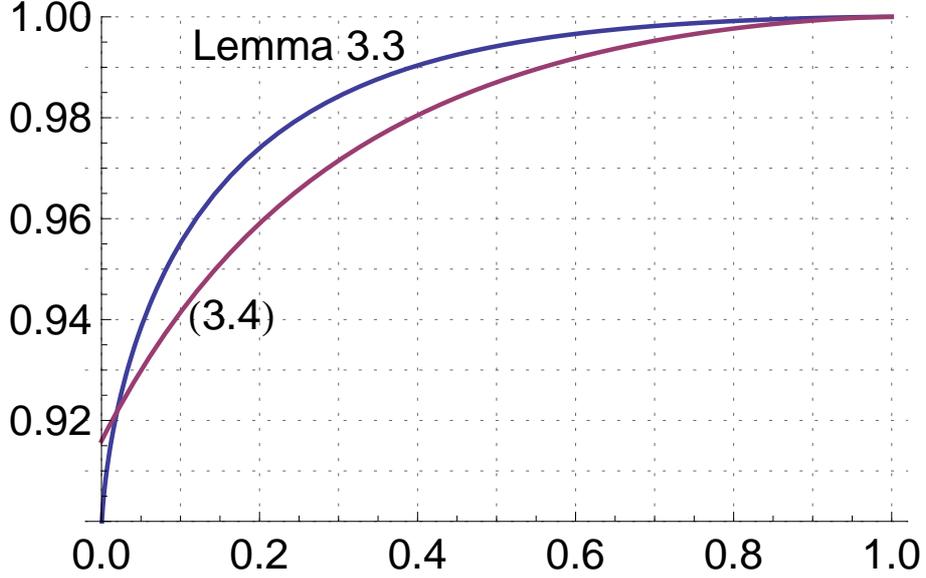}
\caption{Let $g(a,K,p,r,s)=\frac{\sqrt[p]{\varphi_K^a(r^p)}+\sqrt[p]{\varphi_K^a(s^p)}}
{1+\sqrt[p]{\varphi_K^a(r^p)\varphi_K^a(s^p)}}$,
$h(a,K,r,s)=\varphi_K^a\left(\frac{r+s}{1+rs}\right)$ be the lower bounds in Lemma
\ref{varph} and (\ref{aq1}), respectively. For $a=0.2,\,K=1.5,\,p=1.3$ and
$s=0.5$ the functions $g$ and $h$ are plotted. We see that for $r\in(0.2,1)$ the first
lower bound is better.}
\end{figure}

\begin{theorem} For $r,s\in(0,1)$, we have
\begin{enumerate}
\item $|\varphi^a_K(r)-\varphi^a_K(s)|\leq \varphi^a_K(|r-s|)
\leq e^{(1-1/K)R(a)/2}|r-s|^{1/K},\,K\geq1\,,$\\
here $R(a)$ is as in \cite[Theorem 6.7]{aqvv}
\item $|\varphi^a_K(r)-\varphi^a_K(s)|\geq \varphi^a_K(|r-s|)
\geq e^{(1-1/K)R(a)/2}|r-s|^{1/K},\,0<K\leq1\,.$\\
\end{enumerate}
\end{theorem}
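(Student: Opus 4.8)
The plan is to prove the two displayed inequalities in each part separately, reducing everything to two structural facts about $\varphi^a_K$: that it is concave for $K\ge 1$ (convex for $0<K\le 1$), and that the quotient $\varphi^a_K(t)/t^{1/K}$ is monotone in $t$ with limit $e^{(1-1/K)R(a)/2}$ as $t\to 0^+$. Throughout I write $s=\varphi^a_K(r)$ and $s'=\sqrt{1-s^2}$, and I use the two facts $\varphi^a_K(0^+)=0$ and $\varphi^a_{1/K}=(\varphi^a_K)^{-1}$, the latter being immediate from the definition (\ref{1a}).

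For the left (distance) inequalities I would first establish concavity of $\varphi^a_K$ when $K\ge 1$. Differentiating by Lemma \ref{1d}(5) and factoring,
$$(\varphi^a_K)'(r)=\frac1K\,\frac{s}{r}\,\frac{s'}{r'}\,\frac{s'\,\K_a(s)^2}{r'\,\K_a(r)^2}.$$
The last factor is positive and decreasing by Lemma \ref{1c}(1). From (\ref{1aa}) one has $s'=\varphi^a_{1/K}(r')\le r'$ (since $1/K\le 1$), and combining this with Lemma \ref{1c}(1) gives
$$\frac{d}{dr}\Big(\frac{s}{r}\Big)=\frac{s}{r^2}\Big(\frac{s'^2\,\K_a(s)^2}{K\,r'^2\,\K_a(r)^2}-1\Big)<0,$$
so $r\mapsto\varphi^a_K(r)/r$ is decreasing; by the inverse relation $\varphi^a_{1/K}=(\varphi^a_K)^{-1}$ this makes $x\mapsto\varphi^a_{1/K}(x)/x$ increasing, hence $s'/r'=\varphi^a_{1/K}(r')/r'$ decreasing in $r$. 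Thus $(\varphi^a_K)'$ is a product of positive decreasing factors, so it is decreasing and $\varphi^a_K$ is concave. A concave function vanishing at the origin is subadditive, so for $r>s$ one gets $\varphi^a_K(r)-\varphi^a_K(s)\le\varphi^a_K(r-s)$, which is the left inequality of (1). For $0<K\le 1$ the map $\varphi^a_K=(\varphi^a_{1/K})^{-1}$ is the inverse of an increasing concave map through the origin, hence increasing and convex, therefore superadditive, and the inequality reverses to give the left inequality of (2).

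For the right (Hölder) inequalities I set $H(t)=\log\big(\varphi^a_K(t)/t^{1/K}\big)$ and compute, again by Lemma \ref{1d}(5),
$$H'(t)=\frac{1}{K\,t}\Big(\frac{s'^2\,\K_a(s)^2}{t'^2\,\K_a(t)^2}-1\Big),\qquad s=\varphi^a_K(t).$$
By Lemma \ref{1c}(3) the map $r\mapsto r'\,\K_a(r)$ is decreasing, since $1\ge 2a(1-a)$ for $a\in(0,1/2]$. For $K\ge 1$ we have $s\ge t$, hence $s'\,\K_a(s)\le t'\,\K_a(t)$ and $H'\le 0$; for $0<K\le 1$ we have $s\le t$ and $H'\ge 0$. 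So $H$ is monotone and its extreme value is $H(0^+)$, which I evaluate from the asymptotics of $\mu_a$ near $0$: this is exactly where $R(a)$ from \cite[Theorem 6.7]{aqvv} enters, through $\mu_a(r)-\log(1/r)\to R(a)/2$, which yields $\varphi^a_K(t)/t^{1/K}\to e^{(1-1/K)R(a)/2}$ and hence $H(0^+)=(1-1/K)R(a)/2$. Therefore $H(t)\le H(0^+)$ for $K\ge 1$ and $H(t)\ge H(0^+)$ for $0<K\le 1$, which are the right inequalities of (1) and (2).

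The hard part is the concavity of $\varphi^a_K$. The factorization of $(\varphi^a_K)'$ is clean, but showing that $\varphi^a_K(r)/r$ is decreasing requires combining Lemma \ref{1c}(1) with the identity $s'=\varphi^a_{1/K}(r')$ coming from (\ref{1aa}) to bound $s'^2\K_a(s)^2/(r'^2\K_a(r)^2)$ by $K$, and then transporting this control to the factor $s'/r'$ via the inverse relation $\varphi^a_{1/K}=(\varphi^a_K)^{-1}$. A secondary technical point is identifying the limit $H(0^+)$ with $(1-1/K)R(a)/2$, for which I rely on the expansion of $\mu_a$ in \cite[Theorem 6.7]{aqvv}; the continuity of $\varphi^a_K$ up to the endpoint $0$ and the reduction of the case $0<K\le 1$ to $1/K\ge 1$ are then routine.
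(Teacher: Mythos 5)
Your proof is correct, but it takes a genuinely different route from the paper's. The paper's argument is essentially a two-step reduction: it quotes \cite[Theorem 6.7]{aqvv} wholesale, both for the fact that $r^{-1}\varphi^a_K(r)$ is decreasing when $K>1$ and for the H\"older bound $\varphi^a_K(r)\leq e^{(1-1/K)R(a)/2}r^{1/K}$, obtains subadditivity from Lemma \ref{pee1} (with $g(x)=x$), and then deduces part (2) from part (1) via the composition identity $\varphi^a_{AB}=\varphi^a_A\circ\varphi^a_B$, replacing $K,r,s$ by $1/K,\varphi^a_{1/K}(r),\varphi^a_{1/K}(s)$. You instead re-derive the content of the cited theorem from the paper's own Lemmas \ref{1c} and \ref{1d}(5): your monotonicity of $H(t)=\log\bigl(\varphi^a_K(t)/t^{1/K}\bigr)$ via the decrease of $r'\K_a(r)$ (Lemma \ref{1c}(3), valid since $2a(1-a)\leq 1$), together with the limit computation from $\mu_a(r)+\log r\to R(a)/2$, is in effect a proof of \cite[Theorem 6.7]{aqvv}; and your concavity of $\varphi^a_K$ for $K\geq 1$ (correctly assembled from Lemma \ref{1c}(1), the bound $s'=\varphi^a_{1/K}(r')\leq r'$ from (\ref{1aa}), and the inverse relation $\varphi^a_{1/K}=(\varphi^a_K)^{-1}$) is strictly stronger than the subadditivity actually needed, just as convexity handles part (2) directly instead of transporting it from part (1). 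Two small remarks: once you know $\varphi^a_K(r)/r$ is decreasing, Lemma \ref{pee1} already yields subadditivity, so the full concavity/convexity step is a detour, albeit a valid one that buys a structural fact not recorded in the paper; and the asymptotic expansion of $\mu_a$ that you invoke is established in \cite{aqvv} separately from Theorem 6.7 (which merely packages the resulting limit $H(0^+)=(1-1/K)R(a)/2$), so your citation should point to that expansion. Net comparison: the paper's proof is shorter because it leans on the cited theorem and a composition trick, while yours is self-contained modulo the $\mu_a$ expansion and proves more along the way (concavity of $\varphi^a_K$ for $K\geq1$, convexity for $0<K\leq1$, and the monotonicity of $\varphi^a_K(t)/t^{1/K}$).
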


\begin{proof} It follows from \cite[Theorem 6.7]{aqvv}
 that $r^{-1}\varphi^a_K(r)$
is decreasing on $(0,1)$, if $K>1$ and by Lemma \ref{pee1} we obtain
$$\varphi^a_K(x+y)\leq \varphi^a_K(x)+\varphi^a_K(y),\,\;x,y\in(0,1)\,. $$
Now the first inequality in (1) follows if we take $r=x+y$ and
$s=y$, the second one
follows from \cite[Theorem 6.7]{aqvv}. Next, (2) follows
from (1) and the fact that
$$\varphi^a_{AB}(r)=\varphi^a_{A}(\varphi^a_{B}(r)),\,\,A,B>0\,,r\in(0,1)$$
when we replace $K,\,r$ and $s$ by
$1/K,\,\varphi^a_{1/K}(r),\,\varphi^a_{1/K}(s)$,
respectively.
\end{proof}

\begin{theorem} For $a\in(0,1/2],\,c,r\in(0,1)$ and $K,L\in(0,\infty)$ we have
\begin{enumerate}
\item The function $f(K)=\log(\varphi^a_K(r))$ is
increasing and concave from
$(0,\infty)$ onto $(-\infty,0)\,.$
\item The function $g(K)={\rm artanh}(\varphi^a_K(r))$ is
increasing and convex from
$(0,\infty)$ onto $(0,\infty)\,.$
\item $\varphi^a_K(r)^c\varphi^a_L(r)^{1-c}\leq
\varphi^a_{cK+(1-c)L}(r)\leq {\rm tanh}(c\,{\rm artanh}(\varphi^a_K(r))
+(1-c)\,{\rm artanh}(\varphi^a_L(r)))\,.$
\item $$\sqrt{\varphi^a_K(r)\varphi^a_L(r)}\leq \varphi^a_{(K+L)/2}(r)$$
$$\leq \displaystyle\frac{\varphi^a_K(r)+\varphi^a_L(r)}
{1+\varphi^a_K(r)\varphi^a_L(r)
+\varphi^a_{1/K}(r^{'})\varphi^a_{1/L}(r^{'})}\,.$$
\end{enumerate}
\end{theorem}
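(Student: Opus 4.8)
The plan is to establish (1) and (2) by differentiating in $K$ with Lemma \ref{1d}(6), and then to read off (3) and (4) from the resulting concavity/convexity via Jensen's inequality. Throughout put $s=\varphi^a_K(r)$, so that $\mu_a(s)=\mu_a(r)/K$ and hence $1/K^2=\mu_a(s)^2/\mu_a(r)^2$. Since $\mu_a$ is a decreasing homeomorphism of $(0,1)$ onto $(0,\infty)$, the map $K\mapsto s$ is a strictly increasing bijection of $(0,\infty)$ onto $(0,1)$ with $s\to0$ as $K\to0^+$ and $s\to1$ as $K\to\infty$; this already yields every ``onto'' claim, since $\log s$ then sweeps $(-\infty,0)$ and ${\rm artanh}\,s$ sweeps $(0,\infty)$. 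I will also use the defining identity $\K_a(s)\mu_a(s)=\frac{\pi}{2\sin(\pi a)}\K^{'}_a(s)$.

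For (1), Lemma \ref{1d}(6) gives $f'(K)=s^{-1}\,ds/dK=4s'^2\K_a(s)^2\mu_a(r)/(\pi^2K^2)>0$, so $f$ increases. Substituting $1/K^2=\mu_a(s)^2/\mu_a(r)^2$ and the identity above collapses this to
$$f'(K)=\frac{(s'\K^{'}_a(s))^2}{\mu_a(r)\sin^2(\pi a)}=\frac{(s'\K_a(s'))^2}{\mu_a(r)\sin^2(\pi a)}.$$
Because $K\mapsto s$ is increasing it suffices to show $s'\K_a(s')$ decreases in $s$; writing $t=s'$ (decreasing in $s$), the function $t\mapsto t\K_a(t)=t\,\frac{\pi}{2}F(a,1-a;1;t^2)$ is a product of positive increasing functions on $(0,1)$, hence increasing, so $s'\K_a(s')$ decreases in $s$ and $f'$ decreases in $K$; thus $f$ is concave. (Alternatively, (1) is immediate from the preceding theorem: $\log\mu^{-1}_a$ is decreasing and concave, and $K\mapsto\mu_a(r)/K$ is decreasing and convex, so their composition is concave.)

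For (2) the same two reductions give
$$g'(K)=\frac{1}{1-s^2}\frac{ds}{dK}=\frac{4s\K_a(s)^2\mu_a(r)}{\pi^2K^2}=\frac{s\,\K^{'}_a(s)^2}{\mu_a(r)\sin^2(\pi a)}>0,$$
so $g$ increases, and convexity reduces to showing that $s\K^{'}_a(s)^2=s\K_a(s')^2$ is increasing in $s$. This monotonicity is the main obstacle: a direct differentiation turns it into the opaque inequality $\K_a(t)\,[1+(3-4a)(1-t^2)]>4(1-a)\E_a(t)$ for $t=s'\in(0,1)$, which is not transparent. The clean route is Lemma \ref{1c}(2): for every $K>1$ the ratio $s\K^{'}_a(s)^2/(r\K^{'}_a(r)^2)$ lies in $(1,\infty)$, i.e. $s\K^{'}_a(s)^2>r\K^{'}_a(r)^2$ whenever $s=\varphi^a_K(r)>r$. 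Given $0<r_1<r_2<1$, the choice $K=\mu_a(r_1)/\mu_a(r_2)>1$ forces $\varphi^a_K(r_1)=r_2$, so this reads $r_2\K^{'}_a(r_2)^2>r_1\K^{'}_a(r_1)^2$; hence $t\mapsto t\K^{'}_a(t)^2$ is strictly increasing on $(0,1)$, $g'$ increases in $K$, and $g$ is convex.

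Finally, (3) is Jensen's inequality applied to the concave $f$ and the convex $g$: concavity gives $\log\varphi^a_{cK+(1-c)L}(r)\ge c\log\varphi^a_K(r)+(1-c)\log\varphi^a_L(r)$, which exponentiates to the left bound, while convexity gives ${\rm artanh}\,\varphi^a_{cK+(1-c)L}(r)\le c\,{\rm artanh}\,\varphi^a_K(r)+(1-c)\,{\rm artanh}\,\varphi^a_L(r)$, and applying the increasing function $\tanh$ gives the right bound. Part (4) is the case $c=1/2$: the left inequality is immediate, and for the right one I set $p=\varphi^a_K(r)$, $q=\varphi^a_L(r)$ with $p'=\sqrt{1-p^2}$, $q'=\sqrt{1-q^2}$, and use $\tanh\frac{x+y}{2}=(\sinh x+\sinh y)/(\cosh x+\cosh y)$ with $x={\rm artanh}\,p$, $y={\rm artanh}\,q$ to get $\tanh\frac{x+y}{2}=(pq'+qp')/(p'+q')$. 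A one-line computation using $p^2+p'^2=q^2+q'^2=1$ shows $(pq'+qp')/(p'+q')=(p+q)/(1+pq+p'q')$, and the complementary relation (\ref{1aa}) identifies $p'=\varphi^a_{1/K}(r')$ and $q'=\varphi^a_{1/L}(r')$, which is exactly the asserted upper bound.
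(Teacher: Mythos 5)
Your proof is correct, and its skeleton is the paper's: compute $f'$ and $g'$ from Lemma \ref{1d}(6), simplify via $\mu_a(s)=\mu_a(r)/K$ and the identity $\K_a(s)\mu_a(s)=\pi\K^{'}_a(s)/(2\sin(\pi a))$, reduce concavity/convexity to a monotonicity statement in $s$, and finish (3) and (4) by Jensen. The differences are at the lemma level, and they cut both ways. In (2), where you declare the direct differentiation opaque and detour through Lemma \ref{1c}(2) --- choosing $K=\mu_a(r_1)/\mu_a(r_2)>1$ so that $\varphi^a_K(r_1)=r_2$, a valid and rather elegant surjectivity trick --- the paper simply cites Lemma \ref{1c}(3), whose ``Moreover'' clause ($\sqrt{r^{'}}\,\K_a(r)$ decreasing for \emph{all} $a\in(0,1/2]$) settles the matter in one line: with $u=t^{'}$ one has $t\,\K^{'}_a(t)^2=\bigl(\sqrt{u^{'}}\,\K_a(u)\bigr)^2$, which increases in $t$ since $u$ decreases. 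So the monotonicity you worked around is already packaged in the lemma list; your route costs a little more (it leans on the nontrivial fact that the ratio in Lemma \ref{1c}(2) stays above $1$) but needs nothing beyond what you quoted, and it proves the same thing. In (1) you reduce to the trivially increasing $t\mapsto t\,\K_a(t)$, slightly more elementary than the paper's appeal to Lemma \ref{1c}(3); your parenthetical alternative (log-concavity of $\mu^{-1}_a$ composed with the convex decreasing map $K\mapsto\mu_a(r)/K$) is also sound and arguably the cleanest proof of (1). Conversely, in (4) the paper merely asserts that the midpoint inequalities from (1) and (2) yield the claim; you supply what that actually requires, namely $\tanh\frac{x+y}{2}=(pq^{'}+qp^{'})/(p^{'}+q^{'})=(p+q)/(1+pq+p^{'}q^{'})$ --- the identity $(pq^{'}+qp^{'})(1+pq+p^{'}q^{'})=(p+q)(p^{'}+q^{'})$ checks out --- together with the identification $p^{'}=\varphi^a_{1/K}(r^{'})$ from (\ref{1aa}), so you close a gap the paper leaves to the reader. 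A side benefit: your computations silently correct two harmless typos in the paper's displayed derivatives (a $\pi^2K$ that should be $\pi^2K^2$ in the proof of (1), and a missing factor $\sin^2(\pi a)$ in $g^{'}(K)$), neither of which affects the monotonicity conclusions.
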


\begin{proof} For (1), by Lemma \ref{1d}(6) we get
$$f^{'}(K)=4s^{'2}\K_a(s)^2\mu_a(r)/(\pi^2K),$$
which is positive and decreasing by Lemma \ref{1c}(3).
For (2), we get
$$f^{'}(K)=4s\K_a(s)^2\mu_a(r)/(\pi^2K^2)=s\K^{'}_a(s)^2/\mu_a(r)$$
by Lemma \ref{1d}(6), which is positive and increasing by
Lemma \ref{1c}(3).
By (1) and (2) we get
$$c\log(\varphi^a_K(r))+(1-c)\log(\varphi^a_L(r))\leq
\log(\varphi^a_{cK+(1-a)L}(r)),$$
$${\rm artanh}(\varphi^a_{cK+(1-c)K}(r))\leq a\,{\rm artanh}
(\varphi^a_K(r))+(1-c)\,{\rm artanh}(\varphi^a_L(r)),$$
respectively, and (3) follows. Also
$$(\log(\varphi^a_K(r))+\log(\varphi^a_L(r)))/2\leq
\log(\varphi^a_{(K+L)/2}(r))\,, $$
and
$${\rm artanh}(\varphi^a_{(K+L)/2}(r))\leq ({\rm artanh}
(\varphi^a_K(r))+{\rm artanh}(\varphi^a_L))/2\,,$$
follow from (1) and (2), and hence (4) holds.
\end{proof}

\begin{theorem}\label{a} For $K\geq 1$ and $0<m<n$,
the following inequalities hold
\begin{enumerate}
\item $\displaystyle\eta^a_K(m\,n)\leq \displaystyle
\sqrt{\eta^a_K(m^2)\eta^a_K(n^2)}\,,$
\item $\displaystyle\left(\frac{n}{m}\right)^{1/K}<
\displaystyle\frac{\eta^a_K(n)}{\eta^a_K(m)}<
\left(\frac{n}{m}\right)^K\,,$
\item $\eta^a_K(m)\eta^a_K(n)<\displaystyle\left(\eta^a_K
\displaystyle\left(\frac{m+n}{2}\right)\right)^2\,,$
\item $2\displaystyle\frac{\eta^{a}_K(m)\eta^{a}_K(n)}
{\eta^{a}_K(m)+\eta^{a}_K(n)}<\eta^{a}_K(\sqrt{m\,n})<
\sqrt{\eta^{a}_K(m)\eta^{a}_K(n)}$.
\end{enumerate}
\end{theorem}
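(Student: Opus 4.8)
The plan is to pass to logarithms, which turns each of the four assertions into a statement about convexity, concavity, or the slope of $\log\eta^a_K$ in a suitable variable. Writing $x=e^t$, $r=\sqrt{x/(1+x)}$ and $s=\varphi^a_K(r)$, I set $F(t)=\log\eta^a_K(e^t)$ and $G(x)=\log\eta^a_K(x)$. Since the geometric mean of $m^2,n^2$ is $mn$ and that of $m,n$ is $\sqrt{mn}$, taking logarithms shows that (1) and the right-hand inequality of (4) are both exactly the midpoint convexity of $F$, namely $F\big(\tfrac{u+v}{2}\big)\le\tfrac12(F(u)+F(v))$ with $u=\log m$, $v=\log n$ (for (1) one applies this at $2u,2v$). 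Likewise (3) is the midpoint concavity of $G$, i.e. $G\big(\tfrac{m+n}{2}\big)>\tfrac12(G(m)+G(n))$, and the left-hand inequality of (4) is the midpoint convexity of $t\mapsto 1/\eta^a_K(e^t)$, since the reciprocal of a harmonic mean is the arithmetic mean of the reciprocals. Thus everything reduces to four facts about $\eta^a_K$: a two-sided bound on $F'$, convexity of $F$, concavity of $G$, and convexity of the reciprocal.

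First I would compute the slope. By Lemma \ref{1d}(7), $\eta^{a\prime}_K(x)=\tfrac1K\big(r's\K_a(s)/(rs'\K_a(r))\big)^2$, and dividing by $\eta^a_K(x)=(s/s')^2$ together with the identity $x\,r'^2/r^2=1$ gives the clean forms
$$F'(t)=x\,G'(x)=\frac{1}{K}\Big(\frac{\K_a(s)}{\K_a(r)}\Big)^2=K\Big(\frac{\K^{'}_a(s)}{\K^{'}_a(r)}\Big)^2=\frac{\K_a(s)\K^{'}_a(s)}{\K_a(r)\K^{'}_a(r)},$$
the equality of the three expressions being a restatement of the modular relation $\K^{'}_a(s)/\K_a(s)=K^{-1}\K^{'}_a(r)/\K_a(r)$ that comes from $\mu_a(s)=\mu_a(r)/K$. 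For $K\ge1$ we have $s\ge r$, hence $\K_a(s)\ge\K_a(r)$ and $\K^{'}_a(s)=\K_a(s')\le\K_a(r')=\K^{'}_a(r)$; the first and second forms then yield $1/K<F'(t)<K$ (strictly for $K>1$). Integrating this over $t\in[\log m,\log n]$ gives $\tfrac1K\log(n/m)<F(\log n)-F(\log m)<K\log(n/m)$, which is precisely (2) after exponentiating; the case $K=1$ is the identity $\eta^a_1(x)=x$.

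Next, (1), (3) and the right half of (4) follow once I show that $F$ is strictly convex and $G$ strictly concave. Equivalently, I must show that $F'(t)=\tfrac1K(\K_a(s)/\K_a(r))^2$ is increasing in $t$ (equivalently in $r$, as $r\mapsto x=r^2/r'^2$ is an increasing bijection onto $(0,\infty)$), and that $G'(x)=x^{-1}\K_a(s)\K^{'}_a(s)/(\K_a(r)\K^{'}_a(r))$ is decreasing in $x$. I would differentiate once more, using Lemma \ref{1d}(2),(3),(5) to express the derivatives through $\E_a(s)-s'^2\K_a(s)$ and $\E_a(r)-r'^2\K_a(r)$, and then control the outcome with Lemma \ref{1c}(1) (monotonicity of $s'\K_a(s)^2/(r'\K_a(r)^2)$) and Lemma \ref{1c}(3), invoking the monotone l'Hospital rule (Lemma \ref{pee}) wherever a ratio must be shown monotone. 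This monotonicity of the slope is the main obstacle: a naive size estimate is indeterminate, because $\K_a(s)/\K_a(r)$ grows while the factor $1/K$ pushes the other way, so the sign has to be extracted from the $\K_a$–$\E_a$ relations rather than from direct comparisons.

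Finally, for the left half of (4) I would avoid a fresh computation and use the reciprocity $\eta^a_{1/K}(1/x)=1/\eta^a_K(x)$. This follows from (\ref{1aa}): the argument $1/x$ produces $\bar r=\sqrt{(1/x)/(1+1/x)}=r'$, so $\bar s=\varphi^a_{1/K}(r')$ satisfies $\bar s^2=1-s^2$ by (\ref{1aa}), whence $\eta^a_{1/K}(1/x)=(\bar s/\bar s')^2=(s'/s)^2=1/\eta^a_K(x)$. Consequently $1/\eta^a_K(e^t)=\eta^a_{1/K}(e^{-t})=\exp(F_{1/K}(-t))$, where $F_{1/K}$ is the analogue of $F$ for the parameter $1/K$; since convexity is preserved under $t\mapsto-t$ and under exponentiation, the convexity of $F$ from the previous paragraph forces $t\mapsto1/\eta^a_K(e^t)$ to be (strictly) convex, giving the harmonic-mean bound. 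The only delicate point here is that the convexity argument of paragraph three be carried out for every $K>0$, not merely $K\ge1$, which is why I would phrase that step without assuming $s\ge r$.
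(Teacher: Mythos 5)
Your reductions are exactly the right ones (and the same ones the paper uses): (1) and the right half of (4) are midpoint convexity of $F(t)=\log\eta^a_K(e^t)$, (3) is midpoint concavity of $G(x)=\log\eta^a_K(x)$, the left half of (4) is midpoint convexity of $t\mapsto 1/\eta^a_K(e^t)$, and (2) is the slope bound $1/K\le F'\le K$. Your part (2) is in fact complete and more self-contained than the paper's: the three forms of $F'$ do follow from Lemma \ref{1d}(7) and $\mu_a(s)=\mu_a(r)/K$, and $s\ge r$ gives the bounds, whereas the paper simply quotes them from \cite[Theorem 1.16]{aqvv}. But for (1), (3) and (4) you never actually prove the convexity/concavity statements that carry all the content: ``I would differentiate once more, using Lemma \ref{1d}(2),(3),(5)\dots and then control the outcome with Lemma \ref{1c}(1),(3), invoking the monotone l'Hospital rule'' is a plan, not an argument, and you yourself flag this step as ``the main obstacle.'' That is a genuine gap, and the route you sketch is also needlessly hard: the paper never differentiates the slope a second time. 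For (3) it divides Lemma \ref{1d}(7) by $\eta^a_K(x)$ and observes that
$$G'(x)=K\left(\frac{r'\,\K^{'}_a(s)}{r\,\K^{'}_a(r)}\right)^2=K\cdot r'^2\cdot\frac{1}{rs}\cdot\frac{s\,\K^{'}_a(s)^2}{r\,\K^{'}_a(r)^2}$$
is a product of positive factors each decreasing in $r$, by Lemma \ref{1c}(2) -- so $G$ is concave at sight, with no appeal to the $\K_a$--$\E_a$ derivative identities; the convexity of $1/\eta^a_K(e^x)$ is read off the same way from Lemma \ref{1c}(1); and the convexity of $F$ (needed for (1) and the right half of (4)) is quoted from \cite[Theorem 1.16]{aqvv}. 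Note in particular that the ``naive'' monotonicity you need cannot be settled by comparing $\K_a(s)/\K_a(r)$ against $1/K$ term by term, as you correctly observe -- which is precisely why leaving this step unexecuted leaves the theorem unproved.

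There is a second, more specific problem in your treatment of the left half of (4). The reciprocity $\eta^a_{1/K}(1/x)=1/\eta^a_K(x)$ is correct (a nice use of (\ref{1aa})), but it converts the claim into convexity of $F_{1/K}$, i.e.\ convexity of $\log\eta^a_{\kappa}(e^t)$ for the parameter $\kappa=1/K\le 1$ -- outside the range $K\in(1,\infty)$ in which Lemma \ref{1c}, your proposed toolbox, is stated, and outside the regime $s\ge r$ on which your slope estimates rely. You acknowledge this (``carried out for every $K>0$'') but offer no substitute argument, so even as a plan this half rests on an unproved extension. The paper avoids the issue entirely by proving convexity of $h(x)=1/\eta^a_K(e^x)$ directly for the given $K$, again via the factor-monotonicity supplied by Lemma \ref{1c}(1). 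In summary: correct skeleton, part (2) fully done, but the convexity/concavity core of (1), (3) and (4) is missing, and the duality detour for the harmonic-mean bound introduces an additional unverified hypothesis.
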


\begin{proof} We define a function $g(x)=\log\eta^a_K(e^x)$
on $\mathbb{R}$.
By [AQVV, Theorem 1.16], $g$
is increasing, convex and satisfies $1/K\leq g^{'}(x)\leq K$. Then
\begin{eqnarray*}
\log\eta^a_K(e^{(x+y)/2})&=&g\left(\frac{x+y}{2}\right)\leq
\frac{g(x)+g(y)}{2}\\
                         &=& \frac{1}{2}\log(\eta^a_K(e^x))
                         +\frac{1}{2}\log(\eta^a_K(e^y)),
\end{eqnarray*}
and this is equivalent to
$$\log\eta^a_K(e^{x/2}e^{y/2}) \leq \log
(\eta^a_K(e^{x/2})\eta^a_K(e^{y/2}))\,.$$
Hence (1) follows if we set $e^{x/2}=m$ and $e^{y/2}=n$.
For (2), let $x>y$. Then by the inequality $1/K\leq g^{'}(x)\leq K$
 and the mean value theorem we get
$$(x-y)/K\leq g(x)-g(y)\leq K(x-y),$$
and this is equivalent to
$$(\log(e^x)-\log(e^y))/K\leq \log(\eta^a_K(e^x))-\log(\eta^a_K(e^y))
\leq K(\log(e^x)-\log(e^y))\,.$$
By setting $e^{x/2}=m$ and $e^{y/2}=n$ we get the desired inequality.
For (3), let $f(x)=\log(\eta^a_K(x)),\,r=\sqrt{x/(1+x)}$
and $s=\varphi^a_K(r)$.
Then by Lemma \ref{1d}(7) we get
\begin{eqnarray*}
f^{'}(x)&=&\frac{1}{K}\left(\frac{s^{'}}{s}\right)^2
\left(\frac{sr^{'}\K_a(s)}{rs^{'}\K_a(r)}\right)^2=
         \frac{1}{K}\left(\frac{r^{'}}{r}\right)^2
         \left(\frac{\K_a(s)}{\K_a(r)}\right)^2\\
         &=&\frac{1}{K}\left(\frac{r^{'}}{s}\right)^2
         \left(\frac{s\K_a(s)}{r\K_a(r)}\right)^2,
\end{eqnarray*}
which is positive and decreasing by Lemma \ref{1c}(2).
Hence $(f(x)+f(y))/2\leq f((x+y)/2)$, and the
inequality follows.\\
For (4), letting $h(x)=1/\eta^a_K(e^x)$, we see that this is log-concave by (1),
and we get

$$\frac{\log(1/\eta^a_K(e^x))+\log(1/\eta^a_K(e^y))}{2}<
\log(1/\eta^a_K(e^{(x+y)/2}))\,,$$
Setting $e^x=m$ and $e^y=n$ we get the second inequality. We observe that
$h(x)=(s^{'}/s),\, s=\varphi^a_K(r),\, r=\sqrt{e^x/(e^x+1)}$. We get
$$-f^{'}(x)=\frac{1}{K}\left(\frac{r^{'}}{s}\right)
\left(\frac{s^{'}\K_a(s)}{r^{'}\K_a(r)}\right)^2,$$
which is positive and decreasing by Lemma \ref{1c}(1),
hence $h$ is convex, and the first inequality follows easily.
\end{proof}

\begin{theorem}\label{c} For $x\in(0,\infty)$,
the function $f:(0,\infty)\to (0,\infty)$
defined by $f(K)=\eta^a_K(x)$ is increasing,
convex and log-concave. In particular,
$$\eta^a_K(x)^{c}\eta^a_L(x)^{1-c}\leq \eta^a_{cK+(1-c)L}(x)\leq c\,\eta^a_K(x)+(1-c)\eta^a_L(x)$$
for $K,L,x\in(0,\infty)$ and $c\in(0,1)$,
with equality if and only if $K=L$.
\end{theorem}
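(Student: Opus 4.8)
The plan is to read off the three qualitative properties directly from the first two $K$-derivatives of $f$ supplied by Lemma \ref{1d}, and then to obtain the displayed double inequality as the two Jensen inequalities attached to convexity (right-hand side) and log-concavity (left-hand side). Throughout I abbreviate $s=\varphi^a_K(r)$ with $r=\sqrt{x/(1+x)}$ held fixed, so that $f(K)=\eta^a_K(x)=(s/s')^2$, and I record two identities that make the formulas collapse: since $s=\mu_a^{-1}(\mu_a(r)/K)$ we have $K=\mu_a(r)/\mu_a(s)$, and from the definition of $\mu_a$ together with $\K^{'}_a(s)=\K_a(s')$ we get $\K_a(s)\,\mu_a(s)=\frac{\pi}{2\sin(\pi a)}\K_a(s')$. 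Also, $s$ is strictly increasing in $K$ by Lemma \ref{1d}(6).

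First I would dispatch the \emph{increasing} and \emph{log-concave} claims together. Writing $F(K)=\log f(K)$, Lemma \ref{1d}(8) gives $F'(K)=8\mu_a(r)\K_a(s)^2/(\pi^2K^2)$. Substituting $K=\mu_a(r)/\mu_a(s)$ and then $\K_a(s)\,\mu_a(s)=\frac{\pi}{2\sin(\pi a)}\K_a(s')$ collapses this to
\[
F'(K)=\frac{2}{\mu_a(r)\sin^2(\pi a)}\,\K_a(s')^2 .
\]
This is positive, so $f$ is increasing; and since $s$ increases with $K$, its complement $s'$ decreases and hence $\K_a(s')$ decreases, so $F'$ is strictly decreasing and $f$ is strictly log-concave.

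For \emph{convexity} I would show $f'$ is increasing. Multiplying the last display by $f(K)=(s/s')^2$ yields
\[
f'(K)=\frac{2}{\mu_a(r)\sin^2(\pi a)}\left(\frac{s\,\K_a(s')}{s'}\right)^2 ,
\]
so, $s$ being increasing in $K$, it suffices to prove that $w'\K_a(w)/w$ is decreasing in $w$, where I put $w=s'$ (so $w'=s$). A logarithmic differentiation, using Lemma \ref{1d}(2) in the form $w\,w'^2\,\frac{d}{dw}\K_a(w)=2(1-a)\bigl(\E_a(w)-w'^2\K_a(w)\bigr)$, reduces the required monotonicity to the single inequality
\[
\Phi(w):=\K_a(w)-2(1-a)\bigl(\E_a(w)-w'^2\K_a(w)\bigr)>0,\qquad w\in(0,1).
\]
This positivity is the heart of the matter and the step I expect to be the main obstacle. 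I would prove it by noting that $h(w):=\E_a(w)-w'^2\K_a(w)$ satisfies $h(0)=0$ and $h'(w)=2aw\K_a(w)$ (a short computation from parts (2) and (3) of Lemma \ref{1d}), whence $h(w)=\int_0^w 2at\,\K_a(t)\,dt\le aw^2\K_a(w)$ because $\K_a$ is increasing. Therefore $\Phi(w)\ge\K_a(w)\bigl(1-2a(1-a)w^2\bigr)$, and since $2a(1-a)\le\tfrac12$ for $a\in(0,1/2]$ and $w^2<1$, we obtain $\Phi(w)>\tfrac12\K_a(w)>0$. Thus $f'$ is strictly increasing and $f$ is strictly convex.

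Finally, applying the definitions of convexity and log-concavity to the points $K,L$ with weights $c,1-c$ gives
\[
f(K)^c f(L)^{1-c}\le f\bigl(cK+(1-c)L\bigr)\le c\,f(K)+(1-c)f(L),
\]
which is exactly the asserted chain, and the strictness of both the convexity and the log-concavity forces equality precisely when $K=L$. I stress again that the one genuinely delicate point is the positivity of $\Phi$: termwise, the power series of $\Phi$ already has a negative coefficient at $w^2$, so no naive coefficient comparison can succeed, and it is the integral estimate $h(w)\le aw^2\K_a(w)$ combined with the bound $2a(1-a)\le\tfrac12$ that makes the argument go through.
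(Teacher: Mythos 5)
Your proof is correct and, in its skeleton, coincides with the paper's: both start from Lemma \ref{1d}(8), use $K=\mu_a(r)/\mu_a(s)$ and $\mu_a(s)\K_a(s)=\frac{\pi}{2\sin(\pi a)}\K^{'}_a(s)$ to collapse the derivatives, and read off log-concavity from the decrease of $\K_a(s^{'})$ and convexity from the increase of $s\K^{'}_a(s)/s^{'}$ as $K$ grows; your expressions agree with the paper's after noting $\frac{2}{\mu_a(r)\sin^2(\pi a)}=\frac{4}{\pi\sin(\pi a)}\frac{\K_a(r)}{\K^{'}_a(r)}$. The one genuine divergence is the step you single out as the main obstacle, the positivity of $\Phi(w)=\K_a(w)-2(1-a)\left(\E_a(w)-w^{'2}\K_a(w)\right)$: the paper dispatches this in one stroke by citing Lemma \ref{1c}(3), since $w^{'}\K_a(w)$ is decreasing (as $1\geq 2a(1-a)$) and $1/w$ is positive and decreasing, so the product $w^{'}\K_a(w)/w$ is decreasing without any computation. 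Equivalently, Lemma \ref{1c}(3) with $c=1$ gives $2(1-a)\left(\E_a(w)-w^{'2}\K_a(w)\right)\leq w^2\K_a(w)$, hence $\Phi(w)\geq w^{'2}\K_a(w)>0$ directly. Your self-contained route --- $h^{'}(w)=2aw\K_a(w)$, the integral estimate $h(w)\leq aw^2\K_a(w)$, and $2a(1-a)\leq\frac12$ --- is sound and in fact yields the slightly sharper bound $\Phi(w)>\frac12\K_a(w)$, and your remark that naive termwise comparison fails (the $w^2$-coefficient of $\Phi$ is $-\pi a(1-a)/2$) is accurate; but what it buys is independence from Lemma \ref{1c}(3), at the cost of rederiving what was already available machinery. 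The remaining steps (Jensen for strict convexity and strict log-concavity, equality iff $K=L$) match the paper exactly.
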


\begin{proof} We observe that $f(K)=(s/s^{'})^2$, where
$s=\varphi^a_K(r)$ and $r=\sqrt{x/(x+1)}$. We get by
Lemma \ref{1d}(8)
$$f^{'}(K)=\frac{8s^2\K_a(s)^2}{\pi^2 s^{'2}K^2}\mu_a(r)=
\frac{4}{\pi \sin(\pi a)}\frac{\K_a(r)}{\K^{'}_a(r)}
\left(\frac{s\K^{'}_a(s)}{s^{'}}\right)^2,$$
which is positive and increasing by Lemma \ref{1c}(3),
 hence $f$ is increasing and convex. For log-concavity,
let $g(K)=\log(\eta^a_K(x))$. By Lemma \ref{1d}(8) we get
$$g^{'}(K)=\frac{8\K_a(s)^2}{\pi^2K^2}\mu_a(r)=\frac{4}{\pi \sin(\pi a)}\frac{\K_a(r)}
{\K^{'}_a(r)}\K^{'}_a(s)^2,$$
which is decreasing, hence $f$ is log-concave.
\end{proof}

\begin{theorem}\label{d} The function
$$f(K)=\frac{\log\eta^a_K(x)-\log(x)}{K-1}$$
is decreasing from $(1,\infty)$ onto
 $$\left(\frac{\pi\K_a(r)}{\sin(\pi\,a)\K^{'}_a(r)},
\frac{4\K_a(r)\K^{'}_a(r)}{\pi\sin(\pi\,a)}\right),$$ and
the function
$$g(K)=\frac{\eta^a_K(x)-(x)}{K-1}$$
is increasing from $(1,\infty)$ onto
$$(4r^2\sin(\pi\,a)\K_a(r)\K^{'}_a(r)/(\pi r^{'2}),\infty),$$
where $r=\sqrt{x/(x+1)}$.
\end{theorem}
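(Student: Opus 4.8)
The plan is to treat both functions as difference quotients anchored at $K=1$ and apply the monotone l'H\^opital rule (Lemma \ref{pee}), drawing the required convexity and log-concavity from Theorem \ref{c}. The starting observation is that $\varphi^a_1(r)=\mu^{-1}_a(\mu_a(r))=r$, so with $r=\sqrt{x/(x+1)}$ and $r^{'}=1/\sqrt{x+1}$ one has $\eta^a_1(x)=(r/r^{'})^2=x$. Hence both numerators $\log\eta^a_K(x)-\log x$ and $\eta^a_K(x)-x$ vanish at $K=1$, exactly where the denominator $K-1$ vanishes. Writing $F_1(K)=\log\eta^a_K(x)-\log x$, $F_2(K)=\eta^a_K(x)-x$, and $G(K)=K-1$, we have $f=F_1/G$ and $g=F_2/G$ with $F_1(1)=F_2(1)=G(1)=0$, and $\eta^a_K(x)$ is smooth at $K=1$, so Lemma \ref{pee} applies on each $[1,b]$.

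For the monotonicity I would invoke Lemma \ref{pee} with $G^{'}(K)\equiv1$. For $f$ the relevant ratio is $F_1^{'}(K)=\tfrac{d}{dK}\log\eta^a_K(x)$, which is decreasing precisely because $K\mapsto\eta^a_K(x)$ is log-concave by Theorem \ref{c}; hence $f$ is decreasing. For $g$ the ratio is $F_2^{'}(K)=\tfrac{d}{dK}\eta^a_K(x)$, which is increasing because $K\mapsto\eta^a_K(x)$ is convex by Theorem \ref{c}; hence $g$ is increasing. Both $f$ and $g$ are continuous on $(1,\infty)$, so their ranges are the open intervals bounded by the one-sided limits at $K\to1^{+}$ and $K\to\infty$.

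The endpoints at $K\to1^{+}$ follow from l'H\^opital: $\lim_{K\to1^+}f(K)=F_1^{'}(1)$ and $\lim_{K\to1^+}g(K)=F_2^{'}(1)$. Using the derivative formula of Lemma \ref{1d}(8) together with $s=\varphi^a_K(r)$, at $K=1$ we have $s=r$; substituting and simplifying with $\mu_a(r)=\tfrac{\pi}{2\sin(\pi a)}\K^{'}_a(r)/\K_a(r)$ turns $F_1^{'}(1)$ into $4\K_a(r)\K^{'}_a(r)/(\pi\sin(\pi a))$, the upper endpoint of $f$, and $F_2^{'}(1)$ into the stated left endpoint of $g$. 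Since $f$ is decreasing this value is its supremum, and since $g$ is increasing the corresponding value is its infimum.

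The main work is the limit at $K\to\infty$. Here I would use $\mu_a(s)=\mu_a(r)/K\to0$, so $s\to1$ and $s^{'}\to0$, and extract the rate from the boundary singularity of the hypergeometric function. By Lemma \ref{thm1.52} with the pair $(a,1-a)$ and $B(a,1-a)=\pi/\sin(\pi a)$, one gets $\K_a(s)=\tfrac{\pi}{2}F(a,1-a;1;s^2)\sim-\sin(\pi a)\log s^{'}$ as $s\to1$, while $\K_a(s^{'})\to\pi/2$. Feeding these into $\mu_a(s)=\tfrac{\pi}{2\sin(\pi a)}\K_a(s^{'})/\K_a(s)=\mu_a(r)/K$ yields $-\log s^{'}\sim \pi^2K/(4\sin^2(\pi a)\mu_a(r))$, so that $\log\eta^a_K(x)=2(\log s-\log s^{'})\sim\pi^2K/(2\sin^2(\pi a)\mu_a(r))$. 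Dividing by $K-1$ and using the formula for $\mu_a(r)$ gives $\lim_{K\to\infty}f(K)=\pi\K_a(r)/(\sin(\pi a)\K^{'}_a(r))$, the lower endpoint of $f$; the same exponential growth of $\eta^a_K(x)$ forces $g(K)=(\eta^a_K(x)-x)/(K-1)\to\infty$, the upper endpoint of $g$. The delicate point is justifying the asymptotic equivalence carefully enough, controlling the $\log s$ term and the error in $\K_a(s)$, to pin down the constants, which is where I expect the bulk of the effort.
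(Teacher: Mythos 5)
Your proposal is correct, and its skeleton coincides with the paper's proof: both establish the monotonicity of the two difference quotients from Theorem \ref{c} (log-concavity for $f$, convexity for $g$) together with the monotone l'H\^opital rule (Lemma \ref{pee}), and both get the $K\to1^{+}$ endpoints by l'H\^opital applied to the $0/0$ quotient using the derivative formulae of Lemma \ref{1d}. The genuine divergence is at $K\to\infty$ for $f$. The paper never touches boundary asymptotics of the hypergeometric function: after l'H\^opital it substitutes $K=\mu_a(r)/\mu_a(s)$ into $\frac{d}{dK}\log\eta^a_K(x)=8\mu_a(r)\K_a(s)^2/(\pi^2K^2)$ and uses the identity $\mu_a(s)\K_a(s)=\frac{\pi}{2\sin(\pi a)}\K^{'}_a(s)$, so the troublesome product $\mu_a(s)^2\K_a(s)^2$ collapses to a constant times $\K^{'}_a(s)^2$, and then $\K^{'}_a(s)=\K_a(s^{'})\to\K_a(0)=\pi/2$ by plain continuity of $\K_a$ at $0$ --- no rate information needed. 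Your route instead extracts the logarithmic blow-up $\K_a(s)\sim-\sin(\pi a)\log s^{'}$ from Lemma \ref{thm1.52} and solves $\mu_a(s)=\mu_a(r)/K$ for $-\log s^{'}$. This is also rigorous, and less delicate than you fear: Lemma \ref{thm1.52} says precisely that $(F(a,1-a;1;x)-1)/\log(1/(1-x))$ increases to $1/B(a,1-a)=\sin(\pi a)/\pi$, which \emph{is} the asymptotic equivalence you need with no further error control, and the $\log s$ term is harmless since $\log s\to0$ while $-\log s^{'}\to\infty$. What your route buys is the explicit growth rate $\log\eta^a_K(x)\sim\pi^2K/(2\sin^2(\pi a)\mu_a(r))$, which settles both the lower endpoint of $f$ and $g(K)\to\infty$ at a stroke; what the paper's identity trick buys is brevity.

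One caveat: you assert, without computing, that $F_2^{'}(1)$ equals ``the stated left endpoint of $g$.'' Carrying out the substitution with Lemma \ref{1d}(8) gives $F_2^{'}(1)=8x\,\mu_a(r)\K_a(r)^2/\pi^2=4r^2\K_a(r)\K^{'}_a(r)/(\pi r^{'2}\sin(\pi a))$, i.e.\ with $\sin(\pi a)$ in the \emph{denominator}, whereas the theorem statement places $\sin(\pi a)$ in the numerator; the two agree only when $a=1/2$. The discrepancy traces to the paper's displayed ratio $G^{'}(K)/H^{'}(K)=2(s\K^{'}_a(s))^2/(s^{'2}\mu_a(r))$, which has dropped a factor $\sin^2(\pi a)$, so the endpoint as stated appears to be a typo. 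Your method is sound, but as written your proposal inherits that claim untested --- do the computation and flag the corrected constant.
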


\begin{proof} It follows from Theorem \ref{c} and
Lemma \ref{pee}
that $f$ is monotone. Let $s=\varphi^a_K(r)$,
by Lemma \ref{1d}(6), the l'H\^opital
Rule and definition of $\mu_a$ we get
$$\lim_{K\to 1}f(K)=\lim_{K\to 1}\frac{2}{K-1}
\log\left(\frac{sr^{'}}{s^{'}r}\right)$$
$$=\lim_{K\to 1}\frac{8\K_a(s)^2\mu_a(r)}{K^2\pi^2}=
\frac{8}{\pi^2}\K_a(r)^2\mu_a(r)=
\frac{4\K_a(r)\K^{'}_a(r)}{\pi\sin(\pi\,a)}.$$
By using the fact that $K=\mu_a(r)/\mu_a(s)$ and
the l'H\^opital Rule, we get
$$\lim_{K\to \infty}f(K)=\lim_{K\to \infty}
\frac{8\mu_a(s)^2\K_a(s)^2}{\pi^2\mu_a(r)}$$
$$=\lim_{K\to \infty}\frac{2\K^{'}_a(s)^2}{\sin^2(\pi\,a)\mu_a(r)}=
\frac{2\K_a(0)^2}{\sin^2(\pi\,a)\mu_a(r)}
=\frac{\pi\K_a(r)}{\sin(\pi\,a)\K^{'}_a(r)}.$$
Next, let $g(K)=G(K)/H(K)$, where $G(K)=(s/s^{'})^2-(r/r^{'})^2$
and $H(K)=K-1$. We see that
$G(1)=H(1)=0$ and $G(\infty)=H(\infty)=\infty$.
We see that $$G^{'}(K)/H^{'}(K)=
2(s\K^{'}_a(s))^2/(s^{'2}\mu_a(r))\,,$$
and it follows from Lemma \ref{1c}(3)
and Lemma \ref{pee} that $g(K)$ is increasing and the required
limiting values follow from
$\varphi^a_K(r)=\mu^{-1}_a(\mu_a(r)/K)$.
\end{proof}

\begin{remark}\label{e} \rm If we take $x=1$ in
Theorem \ref{d}, then with $t=4\K_a(1/\sqrt{2})^2/(\pi\sin(\pi\,a))$ we have
\begin{enumerate}
\item the function $\log(\lambda_a(K))/(K-1)$
is strictly decreasing from $(1,\infty)$ onto
$(\pi/\sin(\pi\,a),t)$, and
\item the function $(\lambda_a(K)-1)/(K-1)$ is increasing from
$(1,\infty)$ onto $(t\,\sin^2(\pi\,a),\infty)$.
\end{enumerate}
In particular,
$$e^{\pi(K-1)/\sin(\pi\,a)}<\lambda_a(K)<e^{t(K-1)},$$

$$1+t(K-1)\sin^2(\pi\,a)<\lambda_a(K)<\infty,$$
respectively, and we get
$$\max\{e^{\pi(K-1)/\sin(\pi\,a)},1+t(K-1)\sin^2(\pi\,a)\}<\lambda_a(K)<e^{t(K-1)}.$$
\end{remark}

\begin{lemma}\label{f0} For $c\in[-3,0)$, the function
$f(r)=\K_a(r)^c+\K^{'}_a(r)^c$
is strictly increasing from $(0,1/\sqrt{2})$ onto
$((\pi/2)^c,2\K_a(1/\sqrt{2})^c)$.
\end{lemma}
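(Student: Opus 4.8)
The plan is to pin down the two boundary values, use the symmetry $f(r)=f(r')$ to explain the right endpoint, and then reduce strict monotonicity to a single mirror-point inequality whose proof is the real content.

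First I would record the boundary behaviour, where $r'=\sqrt{1-r^2}$. Since $\K_a(0)=\pi/2$, $\K_a(1)=\infty$ and $c<0$, we get $\K_a(r)^c\to(\pi/2)^c$ and $\K^{'}_a(r)^c=\K_a(r')^c\to\K_a(1)^c=0$ as $r\to0^+$, so $f(0^+)=(\pi/2)^c$; at $r=1/\sqrt2$ we have $r=r'$, hence $f(1/\sqrt2)=2\K_a(1/\sqrt2)^c$. Because $\K_a(r')=\K^{'}_a(r)$ and $\K^{'}_a(r')=\K_a(r)$, the function satisfies $f(r')=f(r)$; thus $f'(1/\sqrt2)=0$ automatically, and once strict monotonicity on $(0,1/\sqrt2)$ is proved the stated image $((\pi/2)^c,2\K_a(1/\sqrt2)^c)$ follows by continuity.

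Next I differentiate. Writing $h(r)=\E_a(r)-r'^2\K_a(r)$, Lemma \ref{1d}(2) gives $\K_a'(r)=2(1-a)h(r)/(r\,r'^2)$, and applying the chain rule to $\K^{'}_a(r)=\K_a(r')$ with $dr'/dr=-r/r'$ handles the second term; collecting yields
$$f'(r)=\frac{2c(1-a)}{r\,r'^2}\Big(\K_a(r)^{c-1}h(r)-\K_a(r')^{c-1}h(r')\Big).$$
As $c<0$, the prefactor is negative, so $f'(r)>0$ is equivalent to $\K_a(r)^{c-1}h(r)<\K_a(r')^{c-1}h(r')$, that is to $h(r)/h(r')<(\K_a(r)/\K_a(r'))^{1-c}$. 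On $(0,1/\sqrt2)$ one has $r<r'$, and $\K_a$ is strictly increasing (its derivative is positive since $h>0$), so $\K_a(r)/\K_a(r')<1$; moreover $h$ is increasing because $h'(r)=2ar\K_a(r)>0$ (as in the proof of Lemma \ref{n2}), so $h(r)/h(r')<1$. The right side has base in $(0,1)$ and exponent $1-c\in(1,4]$, hence is smallest when $1-c$ is largest; therefore it suffices to treat the extremal case $c=-3$, which reduces everything to
$$\chi(r)<\chi(r'),\qquad \chi(\rho)=\frac{h(\rho)}{\K_a(\rho)^4}.$$

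This last mirror-point inequality is the crux, and the hard part. A direct check gives $\chi(0^+)=\chi(1^-)=0$ with $\chi>0$ in between, so $\chi$ is \emph{not} monotone and the inequality cannot follow from monotonicity of $\chi$ alone; the real assertion is that the single hump of $\chi$ is skewed toward $\rho=1$. I would pass to $\Lambda:=\log\chi=\log h-4\log\K_a$ and study $\phi(r):=\Lambda(r')-\Lambda(r)$ on $(0,1/\sqrt2)$: one has $\phi(1/\sqrt2)=0$, while the asymptotics as $r\to0^+$ (where $h(r)=O(r^2)$ but $\K_a(r')$ grows only logarithmically) give $\phi(0^+)=+\infty$, so it suffices to prove that $\phi$ is strictly decreasing, i.e.
$$\frac{r}{r'}\,\Lambda'(r')+\Lambda'(r)>0,\quad \Lambda'(\rho)=\frac{2a\rho\K_a(\rho)}{h(\rho)}-\frac{8(1-a)h(\rho)}{\rho\,\rho'^2\K_a(\rho)},$$
obtained from $h'=2a\rho\K_a$ and Lemma \ref{1d}(2). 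Here $\Lambda'(r)>0$ but $\Lambda'(r')$ may be negative (since $r'$ can lie past the peak of $\chi$), so the main obstacle is the estimate showing the positive term dominates. For this I would exploit the monotonicity established in the proof of Theorem \ref{kka}, namely that $q(\rho)=h(\rho)/(\rho'\K_a(\rho)^2)$ is increasing, to rewrite $\chi(\rho)=\rho' q(\rho)/\K_a(\rho)^2$ and bound the two pieces of $\Lambda'$ against each other; the monotone l'Hospital rule (Lemma \ref{pee}) can then convert the sign of $\phi'$ into the monotonicity of a cleaner ratio. Establishing this dominance is the only nonroutine step, after which $\phi>0$ on $(0,1/\sqrt2)$ yields $\chi(r)<\chi(r')$ and hence $f'>0$.
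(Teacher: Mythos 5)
Your reduction chain is sound as far as it goes: the endpoint values, the symmetry $f(r)=f(r')$, the derivative formula $f'(r)=\frac{2c(1-a)}{rr'^2}\bigl(\K_a(r)^{c-1}h(r)-\K_a(r')^{c-1}h(r')\bigr)$, the equivalence of $f'>0$ with $h(r)/h(r')<(\K_a(r)/\K_a(r'))^{1-c}$, and the observation that the case $1-c=4$ is extremal are all correct. But the proof then stops exactly at the decisive point: the mirror inequality $\chi(r)<\chi(r')$, $\chi(\rho)=h(\rho)/\K_a(\rho)^4$, \emph{is} the content of the lemma, and you do not prove it --- you correctly diagnose that your $\chi$ is a non-monotone hump, sketch a plan via $\phi(r)=\Lambda(r')-\Lambda(r)$, and explicitly concede that the dominance estimate $\frac{r}{r'}\Lambda'(r')+\Lambda'(r)>0$ is unestablished. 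The auxiliary fact you propose to lean on (that $q(\rho)=h(\rho)/(\rho'\K_a(\rho)^2)$ is increasing, from the proof of Theorem \ref{kka}) does not close it by itself: writing $\chi(\rho)=\rho'\,q(\rho)/\K_a(\rho)^2$, the gain $q(r)<q(r')$ would have to be complemented by $r'/\K_a(r)^2\le r/\K_a(r')^2$, i.e.\ by $\rho\K_a(\rho)^2$ being decreasing --- which is false (it is increasing), while the available fact $\rho'\K_a(\rho)^2$ decreasing (Lemma \ref{1c}(3)) yields the comparison in the wrong direction. So there is a genuine gap, not a routine verification left to the reader.

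The difficulty is an artifact of your normalization: dividing $h$ by $\K_a(\rho)^{1-c}$ creates the hump. The paper's proof instead weights by the \emph{complementary} integral. The target inequality $h(r)/h(r')<(\K_a(r)/\K_a(r'))^{1-c}$ is the same as $\K'_a(r)^{1-c}h(r)<\K'_a(r')^{1-c}h(r')$, so it suffices that $H(\rho)=\K'_a(\rho)^{1-c}h(\rho)$ be increasing on all of $(0,1)$, for each fixed $c\in[-3,0)$. And it is, by the factorization
$$H(\rho)=\frac{h(\rho)}{\rho^2}\left(\rho^{2/(1-c)}\K'_a(\rho)\right)^{1-c},$$
where $h(\rho)/\rho^2$ is increasing by \cite[Theorem 3.21(1)]{avvb}, and $\rho^{2/(1-c)}\K'_a(\rho)$ is increasing by Lemma \ref{1c}(3): $\sqrt{\rho'}\,\K_a(\rho)$ decreasing in $\rho$ is equivalent to $\sqrt{\rho}\,\K'_a(\rho)$ increasing, and $2/(1-c)\ge 1/2$ holds precisely because $c\ge-3$ --- this is where the hypothesis enters. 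Unlike your $\chi$, this $H$ goes from $0$ to the finite positive limit $(\pi/2)^{1-c}\E_a(1)$ and is globally monotone, so no analysis of the skewness of a hump is needed; it also handles all $c\in[-3,0)$ uniformly, making your extremal-case reduction unnecessary. Replacing your $\chi$-step by this weighted monotonicity would complete your argument. (Incidentally, the paper's proof contains a sign typo, ``$f'(r)<0$ on $(0,1/\sqrt2)$''; since $c<0$ and $h(r)-h(r')<0$ there, one gets $f'>0$, as your version correctly shows.)
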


\begin{proof} By Lemma \ref{1d}(2) we get
$$f^{'}(r)=\frac{2 (1-a) c \K_a(r)^{c-1} \left(\E_a(r)-r^{'2}
 \K_a(r)\right)}{r r^{'}}-\frac{2 (1-a) c \K^{'}_a(r)^{c-1}
 (\E^{'}_a(r)-r^2\K^{'}_a(r))}{r r^{'}}$$
   $$=\frac{2 (1-a) c (\K_a(r)\K^{'}_a(r))^{c-1} }
   {r r^{'}}(h(r)-h(r^{'})),$$
and here $h(r)=\displaystyle\frac{r^2 \K^{'}_a(r)^{1-c}}{r^2} (\E_a(r)-r^{'2}
 \K_a(r)),$ which is increasing on $(0,1)$
 by \cite[Theorem 3.21(1)]{avvb} and Lemma \ref{1c}(3).
Hence $f^{'}(r)<0$ on $(0,1/\sqrt{2})$,
 and the limiting values are clear.
\end{proof}

\begin{theorem}\label{f} $(1)$ For $K>1$, the function
$(\log(\lambda_a(K))/(K-1/K)$
is strictly increasing from $(1,\infty)$ onto
$(2\K_a(1/\sqrt{2})/(\pi\,\sin(\pi\,a)),\pi/\sin(\pi\,a)
)$.\\
$(2)$ The function $\log(\lambda_a(K)+1)$ is convex on
$(0,\infty)$, and $\log(\lambda_a(K))$ is concave.\\
$(3)$ The function $g(K)=(\log(\lambda_a(K)))/\log K$ is
 strictly increasing on $(1,\infty)$. In particular, for
$c\in(0,1)$
$$\lambda_a(K^c)<(\lambda_a(K))^c\,.$$
\end{theorem}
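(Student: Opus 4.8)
The plan is to dispatch the three parts in the order (2)-concavity, (1), (2)-convexity, (3), since (3) will fall out of (1) with essentially no extra work on $\K_a$. Throughout write $s=\varphi^a_K(1/\sqrt{2})$ and record three facts. Taking $r=1/\sqrt{2}$ in \eqref{1aa} gives $\varphi^a_{1/K}(1/\sqrt{2})=\sqrt{1-s^2}=s'$, so
$$\lambda_a(K)=(s/s')^2,\qquad \lambda_a(1)=1,\qquad \lambda_a(K)+1=1/s'^2.$$
Next, since $\mu_a(1/\sqrt{2})=\pi/(2\sin(\pi a))$, the defining relation $\mu_a(s)=\mu_a(1/\sqrt{2})/K$ together with the formula for $\mu_a$ collapses to the clean identity $K=\K_a(s)/\K^{'}_a(s)$. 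Finally, Lemma \ref{1d}(8) with $x=1$ yields $\lambda_a'(K)/\lambda_a(K)=4\K_a(s)^2/(\pi K^2\sin(\pi a))$, and $s$ increases from $1/\sqrt{2}$ to $1$ as $K$ runs over $(1,\infty)$. The concavity of $\log\lambda_a$ in (2) is then immediate, since $\lambda_a(K)=\eta^a_K(1)$ and Theorem \ref{c} asserts $\eta^a_K(1)$ is log-concave in $K$.

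For (1) the plan is to apply the monotone l'Hopital rule (Lemma \ref{pee}) to $F(K)=\log\lambda_a(K)$ and $G(K)=K-1/K$, both vanishing at $K=1$. Using the second and third facts above,
$$\frac{F'(K)}{G'(K)}=\frac{4\K_a(s)^2}{\pi\sin(\pi a)(K^2+1)}=\frac{4}{\pi\sin(\pi a)}\cdot\frac{\K_a(s)^2\,\K^{'}_a(s)^2}{\K_a(s)^2+\K^{'}_a(s)^2},$$
so its reciprocal is a positive constant times $\K_a(s)^{-2}+\K^{'}_a(s)^{-2}$. By Lemma \ref{f0} with $c=-2$ this expression is increasing on $(0,1/\sqrt{2})$; being symmetric under $r\mapsto r'$, it is therefore decreasing on $(1/\sqrt{2},1)$, so $F'/G'$ is increasing in $s$ and hence in $K$. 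Lemma \ref{pee} then gives the monotonicity. The endpoints follow by l'Hopital as $K\to1$ (yielding $2\K_a(1/\sqrt{2})^2/(\pi\sin(\pi a))$) and, as $K\to\infty$, by combining the limit $\log\lambda_a(K)/(K-1)\to\pi/\sin(\pi a)$ from Remark \ref{e} with $(K-1)/(K-1/K)\to1$.

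For the convexity of $\log(\lambda_a(K)+1)$ I would use $\lambda_a(K)+1=\varphi^a_{1/K}(1/\sqrt{2})^{-2}$, so that with $\phi(L)=\log\varphi^a_L(1/\sqrt{2})$ we have $\log(\lambda_a(K)+1)=-2\phi(1/K)$. Differentiating twice shows that convexity in $K$ is equivalent to $L\phi''(L)+2\phi'(L)\le0$, i.e.\ to $L^2\phi'(L)$ being decreasing. From Lemma \ref{1d}(6),
$$L^2\phi'(L)=\frac{2}{\pi\sin(\pi a)}\bigl(s'\,\K_a(s)\bigr)^2,\qquad s=\varphi^a_L(1/\sqrt{2}),$$
and since $s$ increases with $L$ it suffices that $s'\K_a(s)$ decrease in $s$. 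This is precisely the case $c=1$ of Lemma \ref{1c}(3), which applies because $2a(1-a)\le1/2<1$ for all $a\in(0,1/2]$.

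Finally, (3) follows from (1). Factor
$$g(K)=\frac{\log\lambda_a(K)}{\log K}=\frac{\log\lambda_a(K)}{K-1/K}\cdot\frac{K-1/K}{\log K}.$$
The first factor is positive and increasing by (1); the second is positive and increasing on $(1,\infty)$ by Lemma \ref{pee} applied to $K-1/K$ and $\log K$, whose derivative ratio $K+1/K$ increases there. A product of positive increasing functions is increasing, so $g$ is strictly increasing, and evaluating $g(K^c)<g(K)$ for $c\in(0,1)$, $K>1$ gives $\lambda_a(K^c)<\lambda_a(K)^c$. I expect the main obstacle to be the reduction in (1): the crucial, slightly hidden step is rewriting $F'/G'$ as a symmetric function of $\K_a(s)$ and $\K^{'}_a(s)$ via $K=\K_a(s)/\K^{'}_a(s)$, after which Lemma \ref{f0} combined with the reflection symmetry finishes it. The convexity in (2) is the other delicate point, where the $1/K$ substitution must be carried through carefully so that the condition collapses exactly to the monotonicity of $r'\K_a(r)$.
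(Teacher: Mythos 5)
Your proof is correct, and it follows the paper's strategy in all essentials: the same split into (2)-concavity via Theorem \ref{c}, the monotone l'Hospital rule (Lemma \ref{pee}) on the pair $\log\lambda_a(K)$, $K-1/K$ with Lemma \ref{f0} at $c=-2$ as the engine of (1), the reduction of the convexity in (2) to the monotonicity of $r'\K_a(r)$ (Lemma \ref{1c}(3) with $c=1$), and the factorization of $g$ through $(K-1/K)/\log K$ for (3). The genuine, if minor, differences are these. In (1) you stay in the variable $s=\varphi^a_K(1/\sqrt2)\in(1/\sqrt2,1)$, using Lemma \ref{1d}(8) and the identity $K=\K_a(s)/\K_a'(s)$ to reach $\K_a(s)^{-2}+\K_a'(s)^{-2}$, and then transfer Lemma \ref{f0} across $1/\sqrt2$ by the symmetry under $s\mapsto s'$; the paper instead substitutes $r=\mu_a^{-1}(\pi K/(2\sin(\pi a)))\in(0,1/\sqrt2)$, so $K=\K_a'(r)/\K_a(r)$, and applies Lemma \ref{f0} directly to $\log(r'/r)/(\mu_a(r)-\mu_a(r'))$ --- your symmetry step is the price of avoiding that change of variables, and it is valid. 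Your $K\to\infty$ endpoint imports Remark \ref{e}, which is legitimate (Theorem \ref{d} precedes and is independent of Theorem \ref{f}), whereas the paper computes both limits from $G'/H'$; note that your $K\to1$ value $2\K_a(1/\sqrt2)^2/(\pi\sin(\pi a))$, with the square, is the correct one and consistent with $t/2$ in Remark \ref{e} --- the exponent is simply dropped in the printed statement. In (2) your $\phi(L)=\log\varphi_L^a(1/\sqrt2)$, $L=1/K$ computation is the same calculation as the paper's $f(K)=\mu_a^{-1}(\pi K/(2\sin(\pi a)))^{-2}$ in different clothing, both ending at Lemma \ref{1c}(3). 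In (3) you prove $(K-1/K)/\log K$ increasing by Lemma \ref{pee} (derivative ratio $K+1/K$), a slightly cleaner route than the paper's appeal to the inequality \cite[1.58(4)]{avvb}; the endgame $g(K^c)<g(K)$ is identical.
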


\begin{proof} For (1), let $r=\mu^{-1}_a(\pi K/(2\sin(\pi\,a))),\,
0\leq r \leq 1/\sqrt{2}$. Then by (\ref{1aa})
$$r^{'}=\sqrt{1-\left(\mu^{-1}_a\left(\frac{\pi K}
{2\sin(\pi\,a)}\right)\right)^2}$$
$$=\sqrt{1-\left(\mu^{-1}_a
\left(K\mu_a\left(\frac{1}{\sqrt{2}}\right)\right)\right)^2}
=\mu^{-1}_a\left(\frac{\pi}{2K\sin(\pi\,a)}\right),$$
we also observe that $K=\K^{'}_a(r)/\K_a(r)$.
Now it is enough to prove that the function

$$f(r)=\frac{2\log(r^{'}/r)}{\K^{'}_a(r)/\K_a(r)-\K_a(r)\K^{'}_a(r)}
=\frac{\pi\log(r^{'}/r)}{\sin(\pi\,a)(\mu_a(r)+\mu_a(r^{'}))},$$
is strictly decreasing on $(0,1/\sqrt{2})$. Set $f(r)=G(r)/H(r)$. Clearly,
$G(1/\sqrt{2})=H(1/\sqrt{2})=0$. By Lemma \ref{1d}(4) we get
$$\frac{G^{'}(K)}{H^{'}(K)}=\frac{4}{\pi\sin(\pi\,a)
(\K_a(r)^{-2}-\K_a(r^{'})^{-2})},$$
which is strictly decreasing from $(0,1/\sqrt{2})$ onto
$$(2\K_a(1/\sqrt{2})/(\pi\,\sin(\pi\,a)),\pi/\sin(\pi\,a))$$
 by Lemma \ref{f0}.
Now the proof of (1) follows from Lemma \ref{pee}.
For (2), it follows from Theorem \ref{c} that
$\log (\lambda_a(K))$ is concave. Letting
$f(K)=\lambda_a(K)+1$ we have
$$f(K)=\left(\mu^{-1}_a\left(\frac{\pi K}
{2\sin(\pi\,a)}\right)\right)^{-2},$$
by (\ref{1b}) and (\ref{1aa}).
Now we have $\log f(K)=-2\log y$, here
$\mu_a(y)=\pi K /(2\sin(\pi \,a))$.
By Lemma \ref{1d}(4) we get
$$\frac{f^{'}(K)}{f(K)}=-\frac{2}{y}\frac{dy}{dK}
=\frac{4}{\pi}(y^{'}\K_a(y)),$$
which is decreasing in $y$ by Lemma \ref{1c}(3),
and increasing in $K$. Hence
$\log f(K)$ is convex.

For (3), $K>1$, let $h(K)=(K-1/K)/\log K$. We get
$$h^{'}(K)=\frac{(1+K^2)\log K-(K^2-1)}{(K\log K)^2},$$
which is positive because
$$\log K>\frac{2(K-1)}{K+1}>\frac{K^2-1}{K^2+1}$$
by \cite[1.58(4)a]{avvb}, hence $h$ is strictly increasing. Also
$$g(K)=h(K)\frac{\log(\lambda_a(K))}{K-1/K}=
\frac{\log(\lambda_a(K))}{\log K}$$
is strictly increasing by (1). This implies that
$$\frac{\log (\lambda_a(K^c))}{c\log K}<\frac{\log
(\lambda_a(K))}{\log K},$$
and hence (3) follows.
\end{proof}

\begin{corollary}\label{g} For $ 0<r<1/\sqrt{2}$ and
$t=\pi^2/(2\K_a(1/\sqrt{2})^2)$, we have \\
$(1)$ The function $f(r)=(\mu_a(r)-\mu_a(r^{'}))/\log(r^{'}/r)$
is increasing from $(0,1/\sqrt{2})$ onto $(1,t)$. In particular,
$$\log(r^{'}/r)<\mu_a(r)-\mu_a(r^{'})<\frac{\pi^2}
{2\K_a(1/\sqrt{2})^2}\log(r^{'}/r).$$
$(2)$ For $g(r)=\log(r^{'}/r)$,
$$g(r)+\sqrt{(\pi/\sin(\pi\,a))^2+g(r)^2}<2\mu_a(r)<t\,g(r)+
\sqrt{(\pi/\sin(\pi\,a))^2+t^2\,g(r)^2}.$$
%There is equality for $r=1/\sqrt{2}$.
\end{corollary}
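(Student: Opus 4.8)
For part (1) the plan is to invoke the monotone l'Hospital rule of Lemma \ref{pee}. I would write $f(r)=G(r)/H(r)$ with $G(r)=\mu_a(r)-\mu_a(r')$ and $H(r)=\log(r'/r)$. Because $(1/\sqrt2)'=1/\sqrt2$, both $G$ and $H$ vanish at the right endpoint $r=1/\sqrt2$, so applying Lemma \ref{pee} on each interval $[\varepsilon,1/\sqrt2]$ (letting $\varepsilon\to0^+$ to cover all of $(0,1/\sqrt2)$, since $\mu_a(r)\to\infty$ as $r\to0$) reduces the whole question to the monotonicity of the ratio $G'/H'$.

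To compute that ratio I would differentiate with Lemma \ref{1d}(4) together with $dr'/dr=-r/r'$. A short calculation gives $G'(r)=-\frac{\pi^2}{4rr'^2}\bigl(\K_a(r)^{-2}+\K_a(r')^{-2}\bigr)$ and $H'(r)=-1/(rr'^2)$, so that
$$\frac{G'(r)}{H'(r)}=\frac{\pi^2}{4}\left(\frac{1}{\K_a(r)^2}+\frac{1}{\K_a(r')^2}\right).$$
Since $\K^{'}_a(r)=\K_a(r')$, this is exactly $\frac{\pi^2}{4}$ times the $c=-2$ case of Lemma \ref{f0}, which is strictly increasing from $(0,1/\sqrt2)$ onto $\bigl(4/\pi^2,\,2/\K_a(1/\sqrt2)^2\bigr)$. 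Hence $G'/H'$ is increasing and, by Lemma \ref{pee}, so is $f$. The endpoint values then come from the ordinary l'Hospital rule: as $r\to1/\sqrt2$ (a $0/0$ form) the limit of $f$ equals $\lim G'/H'=\frac{\pi^2}{4}\cdot\frac{2}{\K_a(1/\sqrt2)^2}=t$, and as $r\to0^+$ (an $\infty/\infty$ form, since $\mu_a(r)\to\infty$, $\mu_a(r')\to0$) it equals $\frac{\pi^2}{4}\cdot\frac{4}{\pi^2}=1$. This pins the range as $(1,t)$, and the displayed double inequality is just $1<f(r)<t$ rewritten, using $\log(r'/r)>0$ on $(0,1/\sqrt2)$.

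For part (2) the decisive ingredient is the product identity
$$\mu_a(r)\,\mu_a(r')=\left(\frac{\pi}{2\sin(\pi a)}\right)^2,$$
which follows immediately from the definition of $\mu_a$ and $(r')'=r$, as the factors $\K_a(r')/\K_a(r)$ and $\K_a(r)/\K_a(r')$ cancel. Writing $u=\mu_a(r)$, $g=\log(r'/r)$ and $A=\pi/(2\sin(\pi a))$, this identity rewrites the bounds of part (1) as $g<u-A^2/u<t\,g$. Each half is a quadratic inequality in $u>0$: the left one is $u^2-gu-A^2>0$, forcing $u$ to exceed the positive root, and the right one is $u^2-tg\,u-A^2<0$, confining $u$ below its positive root. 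Solving both and using $4A^2=(\pi/\sin(\pi a))^2$ produces precisely the two-sided estimate claimed in part (2).

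I expect the only genuinely delicate point to be the chain-rule bookkeeping in differentiating $G$ and the clean identification of $G'/H'$ with the $c=-2$ instance of Lemma \ref{f0}; once that match is made, monotonicity and both endpoint values are automatic, and part (2) reduces to the purely algebraic step of solving two quadratics in $u$.
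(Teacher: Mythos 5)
Your proof is correct and takes essentially the same route as the paper: part (1) rests on the monotone l'Hospital rule (Lemma \ref{pee}) plus Lemma \ref{f0} at $c=-2$, which is precisely the computation the paper imports from the proof of Theorem \ref{f}(1) (there applied to the reciprocal function $\pi\log(r^{'}/r)/(\sin(\pi a)(\mu_a(r)-\mu_a(r^{'})))$), and part (2) is identical to the paper's, using $\mu_a(r)\mu_a(r^{'})=(\pi/(2\sin(\pi a)))^2$ and solving the two quadratics in $\mu_a(r)$. Your direct computation $G^{'}/H^{'}=\frac{\pi^2}{4}\left(\K_a(r)^{-2}+\K_a(r^{'})^{-2}\right)$ even tacitly corrects the sign slip in the corresponding display of Theorem \ref{f}(1)'s proof, where a plus sign between the two terms appears as a minus.
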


\begin{proof} It follows from the proof of Theorem
\ref{f}(1) that $f(r)$ is increasing,
and limiting values follows easily by the l'H\^opital Rule.
For (2), from the definition of $\mu_a$ we get
$\mu_a(r^{'})=\pi^2/((2\sin(\pi\,a))^2\mu_a(r))$,
replacing this in (1) we obtain
$$1<\frac{\mu_a(r)^2-\pi^2/(2\sin(\pi\,a))^2}
{\mu_a(r)\log(r^{'}/r)}<t=
\frac{\pi^2}{2\K_a(1\sqrt{2})^2}\,.$$
This implies that
\begin{equation}\label{1i}
\mu_a(r)^2-\mu_a(r)\log(r^{'}/r)>\frac{\pi^2}{(2\sin(\pi\,a))^2}
\end{equation}
and
\begin{equation}\label{1j}
\mu_a(r)^2-t\,\mu_a(r)\log(r^{'}/r)<\frac{\pi^2}{(2\sin(\pi\,a))^2}\,.
\end{equation}
We get the left and right inequalities by solving (\ref{1i})
and (\ref{1j}) for $\mu_a(r)$, respectively.
\end{proof}

%%%%%%%%%%%%%%%%%%%55
%%%%%%%%%%%%%%%%%%%%%%%5
%%%%%%%%%%%%%%%%%%%%%%%%%
%%%%%%%%%%%%%%%%%%%%%%%%%%
%%%%%%%%%%%%%%%%%%%%%%%%
\section{Three-parameter complete elliptic integrals}
The results in this section have counterpart in \cite{aqvv}.
For $a,b,c>0,\,a+b\geq c$, the decreasing homeomorphism $\mu_{a,b,c}:(0,1)\to(0,1)$, defined by
$$\mu_{a,b,c}(r)=\frac{B(a,b)}{2}\frac{F(a,b;c;r^{'2})}{F(a,b;c;r^2)},\,r\in(0,1)$$
where $B$ is the beta function. The $(a,b,c)$-modular function is defined by
$$\varphi_K^{a,b,c}(r)=\mu_{a,b,c}^{-1}(\mu_{a,b,c}(r)/K)\,.$$
We denote, in case $a<c$
$$\mu_{a,c}(r)=\mu_{a,c-a,c}(r)\quad{\rm and}\quad \varphi_K^{a,c}(r)=\varphi_K^{a,c-a,c}(r)\,.$$
We define the three-parameter complete elliptic integrals of the first and second kinds for
$0<a<\min\{c,1\}$ and $0<b<c\leq a+b$, by
$$\K_{a,b,c}(r)=\frac{B(a,b)}{2}F(a,b;c;r^{2})$$
$$\E_{a,b,c}(r)=\frac{B(a,b)}{2}F(a-1,b;c;r^{2})\,,$$
and denote
$$\K_{a,c}(r)=\K_{a,c-a,c}(r)\quad{\rm and}\quad \E_{a,c}(r)=\E_{a,c-a,c}(r)\,.$$

\begin{lemma}\label{pc}\cite[Theorem 3.6]{hlvv} For $0<a<c\leq1$,
the function
$f(r)=\mu_{a,c}(r){\rm artanh}\,r$ is strictly increasing from
 $(0,1)$ onto $(0,(B/2)^2)$.
\end{lemma}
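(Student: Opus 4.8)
The plan is to obtain the monotonicity from the monotone l'Hospital rule (Lemma~\ref{pee}) and then read off the range from the endpoint behaviour. Write $G(x)=F(a,c-a;c;x)$, so that $\mu_{a,c}(r)=\frac{B}{2}\,G(r'^2)/G(r^2)$ with $B=B(a,c-a)$; note the reflection identity $\mu_{a,c}(r)\,\mu_{a,c}(r')=(B/2)^2$, which already indicates that the two endpoint values should be symmetric. Since $\mu_{a,c}$ is decreasing while ${\rm artanh}$ is increasing, I would not differentiate $f$ directly; instead I set
$$P(r)={\rm artanh}\,r,\qquad Q(r)=\frac{1}{\mu_{a,c}(r)},$$
both of which extend continuously to $[0,1)$ with $P(0)=Q(0)=0$ (here $Q(0)=0$ because $G(1)=\infty$ forces $\mu_{a,c}(0^+)=\infty$). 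Then $f=P/Q$, and by Lemma~\ref{pee} it suffices to prove that $P'/Q'$ is strictly increasing on $(0,1)$.

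For the derivative, differentiating the quotient $G(r'^2)/G(r^2)$ by the chain and product rules gives
$$\mu_{a,c}'(r)=-B\,r\,\frac{G(r^2)G'(r'^2)+G'(r^2)G(r'^2)}{G(r^2)^2}\,,$$
which is negative, so $Q'=-\mu_{a,c}'/\mu_{a,c}^2>0$. Combining this with $P'(r)=1/r'^2$ yields
$$\frac{P'(r)}{Q'(r)}=\frac{B\,G(r'^2)^2}{4\,r\,r'^2\bigl(G(r^2)G'(r'^2)+G'(r^2)G(r'^2)\bigr)}\,.$$
The whole statement thus reduces to showing that this last expression is strictly increasing on $(0,1)$.

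To establish that monotonicity I would pass to the logarithmic derivative. Writing $S(r)=G(r^2)G'(r'^2)+G'(r^2)G(r'^2)$, one finds after cancellation of the cross terms that $S'(r)=2r\bigl(G''(r^2)G(r'^2)-G(r^2)G''(r'^2)\bigr)$, and I would then use the hypergeometric equation $x(1-x)G''+[c-(c+1)x]G'-a(c-a)G=0$ to eliminate $G''(r^2)$ and $G''(r'^2)$. After this substitution the required inequality becomes a pointwise relation involving only $G$ and $G'$ evaluated at $r^2$ and $r'^2$, which I would close by controlling the logarithmic derivatives $G'/G$ via Lemma~\ref{thm1.52} (applicable precisely because $c=a+(c-a)$, so that $G(x)-1$ and $\log(1/(1-x))$ are comparable, their ratio increasing from $a(c-a)/c$ to $1/B$). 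This reduction of the two–sided statement to a single monotonicity of the hypergeometric quotient, together with the sign analysis once $G''$ is cleared, is the main obstacle; the rest is bookkeeping.

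Finally, for the range I would compute the two limits. Because $c=a+(c-a)$, Lemma~\ref{thm1.52} gives the logarithmic growth $G(x)\sim \tfrac{1}{B}\log\frac{1}{1-x}$ as $x\to 1^-$. Hence as $r\to 0^+$ we get $\mu_{a,c}(r)\sim\log(1/r)$ and $f(r)\sim r\log(1/r)\to 0$, while as $r\to 1^-$ we get $\mu_{a,c}(r)\sim (B/2)^2/\log(1/r')$ and ${\rm artanh}\,r\sim\log(1/r')$, so $f(r)\to (B/2)^2$. Combined with the strict monotonicity, this shows that $f$ is a strictly increasing homeomorphism of $(0,1)$ onto $(0,(B/2)^2)$, completing the proof.
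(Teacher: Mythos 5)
Your skeleton is the right one, and in fact it matches the strategy of the source the paper cites: the paper itself gives no proof of this lemma but quotes it from \cite[Theorem 3.6]{hlvv}, where the argument likewise writes $f=P/Q$ with $P(r)={\rm artanh}\,r$, $Q(r)=1/\mu_{a,c}(r)$ and applies the monotone l'Hospital rule. Your computation of $\mu_{a,c}'$, the reflection identity $\mu_{a,c}(r)\mu_{a,c}(r')=(B/2)^2$, the reduction to the strict monotonicity of
\[
\frac{P'(r)}{Q'(r)}=\frac{B\,G(r'^2)^2}{4\,r\,r'^2\,S(r)},\qquad S(r)=G(r^2)G'(r'^2)+G'(r^2)G(r'^2),
\]
and the endpoint limits via the zero-balanced asymptotics $G(x)\sim B^{-1}\log(1/(1-x))$ (which is where $c=a+(c-a)$ and Lemma \ref{thm1.52} legitimately enter) are all correct, as is the cancellation giving $S'(r)=2r\bigl(G''(r^2)G(r'^2)-G(r^2)G''(r'^2)\bigr)$.

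The gap is that the heart of the lemma --- the strict monotonicity of $P'/Q'$ --- is exactly the step you leave as a plan, and the plan as described does not close. Lemma \ref{thm1.52} controls the ratio $(G(x)-1)/\log(1/(1-x))$, not the logarithmic derivative $G'/G$ that your ``pointwise relation'' after eliminating $G''$ would need; you neither exhibit that relation nor verify its sign, and calling the remainder ``bookkeeping'' understates it. Already in the classical case $a=1/2$, $c=1$, where Legendre's relation gives $S(r)=1/(\pi r^2 r'^2)$ exactly, your quotient becomes $P'/Q'=r\,\K'(r)^2$ (with $\K=\K_{1/2}$), and its monotonicity is equivalent to $(1+r^2)\K'(r)>2\E'(r)$; the two sides agree up to third order at $r=1$ (the difference is of size $\tfrac{\pi}{16}r'^4$), so positivity requires a genuine separate derivative argument. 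In the general zero-balanced three-parameter case the situation is strictly harder: there is no Legendre relation with constant right-hand side, and $S(r)$ is, up to explicit constants, the non-constant Legendre $M$-function, $S(r)\sim M(r^2)/(r^2r'^2)$; the monotonicity you need is precisely the content of the $M$-function machinery of \cite{hlvv} (their Lemma 3.4 and Theorem 3.12, which this paper itself invokes in the proof of Theorem \ref{muac}). So your proposal is a correct framing with correct limit computations, but the essential monotonicity claim remains unproved and the tools you point to are insufficient for it.
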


\begin{lemma}\label{pd}\cite[Lemma 4.1]{hlvv} Let
$a<c\leq 1,\,K\in(1,\infty),
\,r\in(0,1)$, and let $s=\varphi^{a,c}_{K}(r)$ and
$t=\varphi^{a,c}_{1/K}(r)$. Then the function
\begin{enumerate}
\item $f_1(r)=\K_{a,c}(s)/\K_{a,c}(r)$ is increasing
from $(0,1)$ onto $(1,K)$,
\item $f_2(r)=s^{'}\K_{a,c}(s)^2/(r^{'}\K_{a,c}(r)^2)$
is decreasing from $(0,1)$
 onto $(0,1)$,
\item $f_3(r)=s\K^{'}_{a,c}(s)^2/(r\K^{'}_{a,c}(r)^2)$
is decreasing from $(0,1)$ onto $(1,\infty)$,
\item $g_1(r)=\K_{a,c}(t)/\K_{a,c}(r)$ is decreasing from
$(0,1)$ onto $(1/K,1)$,
\item $g_2(r)=t^{'}\K_{a,c}(t)^2/(r^{'}\K_{a,c}(r)^2)$ is
 increasing from $(0,1)$
 onto $(1,\infty)$,
\item $g_3(r)=t\K^{'}_{a,c}(t)^2/(r\K^{'}_{a,c}(r)^2)$ is
 increasing from $(0,1)$
onto $(0,1)$,
\item $g_4(r)=s/r$ is decreasing from $(0,1)$ onto $(1,\infty)$,
\item $g_5(r)=t/r$ is increasing from $(0,1)$ onto $(0,1)$.
\end{enumerate}
\end{lemma}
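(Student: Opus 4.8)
The plan is to mirror the proof of the two--parameter statement Lemma \ref{1c}, replacing $\K_a$ by $\K_{a,c}$ throughout and using the monotone l'Hospital rule (Lemma \ref{pee}) together with Lemma \ref{pc}. Write $B=B(a,c-a)$. The first task is to assemble the differentiation machinery. Since $\mu_{a,c}(r)=\tfrac{B}{2}\,\K^{'}_{a,c}(r)/\K_{a,c}(r)$, I would apply Lemma \ref{1d}(1) to $F(a,c-a;c;r^2)$ and use the contiguous relations for this hypergeometric function to derive the analogues of Lemma \ref{1d}(2),(4): a formula $\tfrac{d}{dr}\K_{a,c}(r)=C\,(\E_{a,c}(r)-r^{'2}\K_{a,c}(r))/(r\,r^{'2})$ and hence $\mu^{'}_{a,c}(r)=-C^{'}/(r\,r^{'2}\K_{a,c}(r)^2)$ for suitable positive constants $C,C^{'}$. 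Differentiating the defining identity $\mu_{a,c}(s)=\mu_{a,c}(r)/K$ then yields the three--parameter version of Lemma \ref{1d}(5), namely $\tfrac{ds}{dr}=s\,s^{'2}\K_{a,c}(s)^2/(K\,r\,r^{'2}\K_{a,c}(r)^2)$, and an analogous $\tfrac{dt}{dr}=K\,t\,t^{'2}\K_{a,c}(t)^2/(r\,r^{'2}\K_{a,c}(r)^2)$. Because $\mu_{a,c}$ is a decreasing homeomorphism of $(0,1)$, for $K>1$ one has $s>r>t$, whence $s^{'}<r^{'}<t^{'}$; this sign information is exactly what distinguishes the $f_i$ from the $g_i$.

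Next I would treat the six ``squared--$\K$ ratio'' statements (1)--(6) by a single uniform computation. For each function I take the logarithmic derivative, substitute the formula for $ds/dr$ (resp.\ $dt/dr$), and reduce the sign question to the monotonicity of the auxiliary function $r^{'\gamma}\K_{a,c}(r)$ for a suitable exponent $\gamma=\gamma(a,c)$, the three--parameter analogue of Lemma \ref{1c}(3). For (1) and (4) it is convenient first to rewrite the defining relation as $K=\K^{'}_{a,c}(r)\K_{a,c}(s)/(\K_{a,c}(r)\K^{'}_{a,c}(s))$, which gives $f_1(r)=K\,\K_{a,c}(s^{'})/\K_{a,c}(r^{'})$; letting $r\to1^-$ (so $r^{'},s^{'}\to0$) produces the limit $K$, while $r\to0^+$ gives $1$. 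The monotonicity of all six then follows from Lemma \ref{pee} once each derivative quotient is seen to be monotone, and the endpoint values are read off from $\K_{a,c}(0)=B/2$, $\K_{a,c}(1^-)=\infty$, and the boundary behaviour of $\varphi^{a,c}_K$.

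For the remaining statements (7) and (8) I would argue directly on $g_4=s/r$ and $g_5=t/r$. From the formula for $ds/dr$ one computes $\tfrac{d}{dr}\log(s/r)=\tfrac1r\big(s^{'2}\K_{a,c}(s)^2/(K\,r^{'2}\K_{a,c}(r)^2)-1\big)$, and since statement (2) gives $s^{'}\K_{a,c}(s)^2/(r^{'}\K_{a,c}(r)^2)<1$ while $s^{'}/r^{'}<1$ and $K>1$, the bracket is negative; hence $g_4$ is decreasing. Symmetrically, using $dt/dr$ together with $t^{'}>r^{'}$ and the companion statement (5), the bracket for $\log(t/r)$ is positive, so $g_5$ is increasing. (Alternatively, Lemma \ref{pc}, applied to the increasing product $\mu_{a,c}(r)\,{\rm artanh}\,r$, yields the same monotonicity.) The limiting ranges $(1,\infty)$ for $g_4$ and $(0,1)$ for $g_5$ follow from the endpoint asymptotics $s/r\to\infty$, $t/r\to0$ as $r\to0^+$ (the three--parameter counterpart of Corollary \ref{b}) and $s,t\to1$ as $r\to1^-$.

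The main obstacle is the technical core common to (1)--(6): establishing that $r^{'\gamma}\K_{a,c}(r)$ is monotone with the correct exponent, equivalently that $\E_{a,c}(r)-r^{'2}\K_{a,c}(r)$ is positive and that the relevant logarithmic derivative has constant sign. In the two--parameter case this rests on delicate hypergeometric inequalities, the content of Lemma \ref{1c}(3); reproving it for $\K_{a,c}$ requires the contiguous relations for $F(a,c-a;c;\cdot)$ and a generalized monotonicity statement in the spirit of Lemma \ref{thm1.52}. Once that single monotonicity fact is in hand, every assertion of the lemma falls out of the monotone l'Hospital rule.
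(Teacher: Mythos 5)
You have attempted to reconstruct a proof that the paper itself does not give: Lemma \ref{pd} is quoted verbatim from \cite[Lemma 4.1]{hlvv} for easy reference, so the only comparison available is with that source, and your reconstruction breaks down at its foundation. Your derivative formula for $\K_{a,c}$ is in fact fine in the zero-balanced case $b=c-a$ (the contiguous relation $x(1-x)F'(a,c-a;c;x)=(c-a)\left[F(a-1,c-a;c;x)-(1-x)F(a,c-a;c;x)\right]$ gives $C=2(c-a)$), but the step to $\mu^{'}_{a,c}(r)=-C^{'}/(r\,r^{'2}\K_{a,c}(r)^2)$ with a \emph{constant} $C^{'}$ is false for $c<1$. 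Differentiating $\mu_{a,c}=\frac{B}{2}\K^{'}_{a,c}/\K_{a,c}$ produces in the numerator the combination $\K_{a,c}\E^{'}_{a,c}+\K^{'}_{a,c}\E_{a,c}-\K_{a,c}\K^{'}_{a,c}$, and this is constant (Elliott's identity, the generalized Legendre relation) only when the first two parameters sum to $1$, i.e. here only when $c=1$ since $a+b=c$. For $c<1$ it is the non-constant Legendre $M$-function, which is precisely the central object of \cite{hlvv} --- and this very paper signals the issue: in the proof of Theorem \ref{muac} one finds $f^{'}(x)=\frac{B(a,b)}{2}M(r^2)/(r^{'2}\K(r)^2)$, with positivity and monotonicity imported from \cite[Lemma 3.4(1), Theorem 3.12(2)]{hlvv}. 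Consequently your key formula must read $ds/dr=\frac{1}{K}\,\frac{M(r^2)}{M(s^2)}\,\frac{s s^{'2}\K_{a,c}(s)^2}{r r^{'2}\K_{a,c}(r)^2}$ (similarly for $dt/dr$), and the extra ratio $M(r^2)/M(s^2)$ contaminates every logarithmic derivative in your ``uniform computation'' for (1)--(6) and both brackets in your treatment of (7)--(8): the sign argument you give for $g_4$ and $g_5$, which rests on the $M$-free formula, does not close without additional information comparing $M(r^2)$ with $M(s^2)$.

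Secondarily, even with the corrected calculus your text is a program rather than a proof: the ``technical core'' --- positivity of $\E_{a,c}(r)-r^{'2}\K_{a,c}(r)$, monotonicity of $r^{'\gamma}\K_{a,c}(r)$, a three-parameter analogue of Lemma \ref{thm1.52} --- is explicitly deferred, and the concluding claim that one such monotonicity fact plus Lemma \ref{pee} yields all eight items underestimates what is needed once $M$ enters (one also needs \cite[Lemma 3.4, Theorem 3.12]{hlvv}-type facts about $M$, which is why that paper develops them before proving its Lemma 4.1). A small but consequential slip: $\mu_{a,c}$ is a decreasing homeomorphism of $(0,1)$ onto $(0,\infty)$ in the zero-balanced case (the codomain ``$(0,1)$'' printed in this paper is itself a typo), and the logarithmic blow-up $\mu_{a,c}(r)\approx -\log r$ as $r\to 0^{+}$ is what you need to justify the endpoint behaviour $s/r\to\infty$ and $t/r\to 0$ in (7) and (8). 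The parts of your sketch worth keeping are the identity $f_1(r)=K\,\K_{a,c}(s^{'})/\K_{a,c}(r^{'})$ with its endpoint limits, the ordering $t<r<s$ for $K>1$, and the derivation of (7)--(8) from (2) and (5) \emph{modulo} the missing $M$-ratio.
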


\begin{theorem}\label{muac} For $0<a<c\leq 1$, the
function $f(x)=\mu_{a,c}(1/\cosh(x))$
is increasing and concave from $(0,\infty)$ onto
$(0,\infty)$. In particular,
$$\mu_{a,c}\left(\frac{rs}{1+r^{'}s^{'}}\right)\leq \mu_{a,c}(r)
+\mu_{a,c}(s)\leq
2\mu_{a,c}\left(\sqrt{\frac{2rs}{1+rs+r^{'}s^{'}}}\right)\,,$$
for all $r,s\in(0,1)$. The second inequality becomes equality
if and only if $r=s$.
\end{theorem}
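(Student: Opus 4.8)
The plan is to mirror the proof of Theorem \ref{kka}, replacing $1/\K_a$ by $\mu_{a,c}$ and using the three‑parameter counterparts of Lemmas \ref{1c} and \ref{1d}. Write $r=1/\cosh x$, so that $r'=\tanh x$ and $dr/dx=-r\,r'$. Since $\mu_{a,c}(r)=\tfrac{B(a,c-a)}{2}\,\K'_{a,c}(r)/\K_{a,c}(r)$, differentiating the hypergeometric series via Lemma \ref{1d}(1) together with the generalized Legendre relation yields the $(a,c)$‑analog of Lemma \ref{1d}(4), namely $\mu_{a,c}'(r)=-C/(r\,r'^2\,\K_{a,c}(r)^2)$ for a positive constant $C=C(a,c)$. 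Hence $f'(x)=\mu_{a,c}'(r)\,(-r\,r')=C/(r'\,\K_{a,c}(r)^2)>0$, so $f$ is increasing.

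For concavity I must show that $f'(x)$ is decreasing in $x$. Because $r$ decreases as $x$ increases and $f'(x)=C/(r'\,\K_{a,c}(r)^2)$, it is equivalent to show that $r'\,\K_{a,c}(r)^2=(\sqrt{r'}\,\K_{a,c}(r))^2$ is strictly decreasing in $r$; this is exactly the three‑parameter version of the last assertion of Lemma \ref{1c}(3) (that $\sqrt{r'}\,\K_a(r)$ is decreasing), available from \cite{hlvv}. Granting it, $f'$ is strictly decreasing, so $f$ is strictly concave. The endpoint behaviour follows since $r\to1$ as $x\to0+$ gives $f(0+)=\mu_{a,c}(1)=0$, while $r\to0$ as $x\to\infty$ gives $f(\infty)=\mu_{a,c}(0+)=\infty$; with continuity and monotonicity, $f$ maps $(0,\infty)$ onto $(0,\infty)$.

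It remains to translate the analytic properties into the two inequalities. For the upper (second) inequality I apply the midpoint concavity estimate $\tfrac12(f(x)+f(y))\le f((x+y)/2)$. Setting $s=1/\cosh y$ and using the identity $\cosh^2((x+y)/2)=(1+rs+r's')/(2rs)$, equivalently $1/\cosh((x+y)/2)=\sqrt{2rs/(1+rs+r's')}$, this reads $\mu_{a,c}(r)+\mu_{a,c}(s)\le 2\mu_{a,c}(\sqrt{2rs/(1+rs+r's')})$, and strict concavity forces equality exactly when $x=y$, i.e. $r=s$. For the lower (first) inequality, $f(0+)=0$ and $f'$ decreasing give, by the monotone l'Hôpital rule (Lemma \ref{pee}), that $f(x)/x$ is decreasing; Lemma \ref{pee1} with $g(x)=x$ then yields subadditivity $f(x+y)\le f(x)+f(y)$. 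Since $1/\cosh(x+y)=rs/(1+r's')$, this is precisely $\mu_{a,c}(rs/(1+r's'))\le\mu_{a,c}(r)+\mu_{a,c}(s)$.

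The main obstacle is the concavity step, which rests on two facts that are the three‑parameter analogs of results stated in the excerpt for the one‑parameter case: the derivative formula $\mu_{a,c}'(r)=-C/(r\,r'^2\,\K_{a,c}(r)^2)$ (counterpart of Lemma \ref{1d}(4), obtained from the definition of $\mu_{a,c}$, Lemma \ref{1d}(1), and a generalized Legendre identity) and the monotonicity of $\sqrt{r'}\,\K_{a,c}(r)$ (counterpart of Lemma \ref{1c}(3)). Once these are in hand, the two coordinate identities for $\cosh$ and the two applications of Lemmas \ref{pee} and \ref{pee1} are routine and proceed exactly as in Theorem \ref{kka}.
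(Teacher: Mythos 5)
Your overall architecture is the paper's: write $r=1/\cosh x$, prove $f$ increasing and concave, obtain the right-hand inequality from midpoint concavity via $\cosh^2((x+y)/2)=(1+rs+r's')/(2rs)$, and the left-hand one from $f(0+)=0$ together with Lemmas \ref{pee} and \ref{pee1} and $\cosh(x+y)=(1+r's')/(rs)$; that endgame is correct and identical to the paper. The genuine gap is exactly the step you yourself flag as the ``main obstacle'': there is no generalized Legendre identity making
$$\mu_{a,c}'(r)=-\,\frac{C}{r\,r'^{2}\,\K_{a,c}(r)^{2}}$$
hold with a \emph{constant} $C=C(a,c)$, except when $c=1$. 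The constant in the classical relation comes from Elliott's identity, and for $\K_{a,b,c},\E_{a,b,c}$ with $b=c-a$ the parameter patterns in Elliott's identity match the combination $\K_{a,c}(r)\E'_{a,c}(r)+\K'_{a,c}(r)\E_{a,c}(r)-\K_{a,c}(r)\K'_{a,c}(r)$ only when $a+b=1$, i.e. $c=1$, which is the two-parameter case of \cite{aqvv}. For general $0<a<c\leq 1$ this combination genuinely depends on $r$ --- that is the entire raison d'\^etre of the Legendre $M$-function of \cite{hlvv}; for instance at $a=b=1/4$, $c=1/2$ it tends to $\E_{a,c}(1)\K_{a,c}(0)\approx 7.42$ as $r\to0+$ but equals $\K_{a,c}(2\E_{a,c}-\K_{a,c})\approx 7.67$ at $r=1/\sqrt{2}$.

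Consequently your reduction of the concavity step to the decrease of $\sqrt{r'}\,\K_{a,c}(r)$ is insufficient. The correct derivative is, up to the normalization used in the paper,
$$f'(x)=\frac{B(a,b)}{2}\,\frac{M(r^2)}{r'^{2}\,\K_{a,c}(r)^{2}}\,,\qquad M(r^2)=\left(\frac{2}{B(a,b)}\right)^{2} b\,\bigl(\K_{a,c}\E'_{a,c}+\K'_{a,c}\E_{a,c}-\K_{a,c}\K'_{a,c}\bigr)\,,$$
and concavity requires that this entire quotient, including the non-constant factor $M(r^2)$, be increasing in $r$ (hence decreasing in $x$). That is precisely the nontrivial input the paper imports from \cite[Lemma 3.4(1), Theorem 3.12(2)]{hlvv}: positivity of $M$ and the monotonicity of $M(r^2)/(r'^{2}\K_{a,c}(r)^{2})$. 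Monotonicity of $\sqrt{r'}\,\K_{a,c}(r)$ alone --- even granting that the three-parameter analog of Lemma \ref{1c}(3) is available --- says nothing about the $r$-dependence of $M$ and cannot close the argument. To repair the proof, replace your constant-$C$ derivative formula by the $M$-function formula above and invoke the cited results of \cite{hlvv}; with that substitution the remainder of your argument (endpoint values, the two $\cosh$ identities, the equality case from strict concavity, and the applications of Lemmas \ref{pee} and \ref{pee1}) goes through verbatim, exactly as in the paper.
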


\begin{proof} Let $r=1/\cosh(x)$ and (cf. \cite{hlvv})
$$M(r^2) = \left(\frac{2}{B(a,b)}\right)^2  \, b
\, ( \K_{a,c}(r)\E'_{a,c}(r)+\K'_{a,c}(r)\E_{a,c}(r) -\K_{a,c}(r)\K'_{a,c}(r))\,.$$
We get
$$f^{'}(x)=\frac{B(a,b)}{2}\frac{M(r^2)}{r^{'2}\K(r)^2}\,,$$
which is positive and increasing in $r$ by
\cite[Lemma 3.4(1), Theorem 3.12(2)]{hlvv}, and $f$
is decreasing in $x$. Hence $f$ is concave.
This implies that
$$\frac{1}{2}\left(\mu_{a,c}\left(\frac{1}{\cosh(x)}\right)
+\mu_{a,c}\left(\frac{1}{\cosh(y)}\right)\right)
\leq \mu_{a,c}\left(\frac{1}{\cosh((x+y)/2)}\right)\,,$$
and we get the second inequality by using the formula
$$\left(\cosh\left(\frac{x+y}{2}\right)\right)^2=\frac{1+rs+r^{'}s^{'}}{2rs}$$
and setting $s=1/\cosh(y)$. Next, $f^{'}(x)$
is decreasing in $x$, and $f(0)=0$. Then
$f(x)/x$ is decreasing on $(0,\infty)$ and
$f(x+y)\leq f(x)+f(y)$ by Lemmas \ref{pee}
and \ref{pee1}, respectively.
Hence the first inequality follows.
\end{proof}

\begin{lemma} For $0<a<c\leq 1$, we have
$$\mu_{a,c}(r)+\mu_{a,c}(s)\leq 2\mu_{a,c}(\sqrt{rs})\,,$$
for all $r,s\in(0,1)$, with equality if and only if $r=s$.
\end{lemma}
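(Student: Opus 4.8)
The plan is to obtain this lemma as an immediate consequence of the second inequality in Theorem \ref{muac} together with the monotonicity of $\mu_{a,c}$. Recall that Theorem \ref{muac} supplies, for all $r,s\in(0,1)$,
$$\mu_{a,c}(r)+\mu_{a,c}(s)\leq 2\mu_{a,c}\left(\sqrt{\frac{2rs}{1+rs+r^{'}s^{'}}}\right)\,,$$
with equality if and only if $r=s$. So the only thing left to do is to compare the argument $\sqrt{2rs/(1+rs+r^{'}s^{'})}$ appearing on the right with the target argument $\sqrt{rs}$, and then exploit that $\mu_{a,c}$ is a decreasing homeomorphism of $(0,1)$.

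First I would record the elementary estimate $rs+r^{'}s^{'}\leq 1$. This is exactly the equivalence chain already used in the proof of Theorem \ref{kka}: from $(r-s)^2\geq 0$ one gets $1-rs\geq r^{'}s^{'}$, hence $1+rs+r^{'}s^{'}\leq 2$. Dividing, this gives $2rs/(1+rs+r^{'}s^{'})\geq rs$, and therefore
$$\sqrt{\frac{2rs}{1+rs+r^{'}s^{'}}}\geq \sqrt{rs}\,.$$

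Next I would apply $\mu_{a,c}$, which reverses the inequality because it is decreasing, yielding
$$\mu_{a,c}\left(\sqrt{\frac{2rs}{1+rs+r^{'}s^{'}}}\right)\leq \mu_{a,c}(\sqrt{rs})\,.$$
Chaining this with the bound from Theorem \ref{muac} produces $\mu_{a,c}(r)+\mu_{a,c}(s)\leq 2\mu_{a,c}(\sqrt{rs})$, which is the asserted inequality.

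For the equality statement I would observe that both steps in the chain degenerate to equalities precisely when $r=s$: the estimate of Theorem \ref{muac} is an equality iff $r=s$, while $\sqrt{2rs/(1+rs+r^{'}s^{'})}=\sqrt{rs}$ holds iff $rs+r^{'}s^{'}=1$, i.e. iff $(r-s)^2=0$. Since the whole argument is a corollary rather than a fresh analytic problem, I do not anticipate any genuine obstacle; the only point requiring care is checking that these two equality conditions coincide (they both reduce to $r=s$), so that no spurious equality case is introduced by the intermediate monotonicity step.
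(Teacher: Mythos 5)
Your proposal is correct and is essentially identical to the paper's own proof: the paper also derives $1+rs+r^{'}s^{'}\leq 2$ from $(r-s)^2\geq 0$, applies the decreasing monotonicity of $\mu_{a,c}$ to compare $\mu_{a,c}\bigl(\sqrt{2rs/(1+rs+r^{'}s^{'})}\bigr)$ with $\mu_{a,c}(\sqrt{rs})$, and concludes via Theorem \ref{muac}. Your explicit verification that both equality conditions reduce to $r=s$ is a minor point the paper leaves implicit, but it does not change the argument.
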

\begin{proof} Clearly,
$$(r-s)^2\geq 0 \Longleftrightarrow 1+r^2s^2\geq
1-(r-s)^2+r^2s^2 $$
$$ \Longleftrightarrow (1-rs)^2\geq 1-r^2-s^2+r^2s^2
\Longleftrightarrow 1-rs\geq r^{'}s^{'}$$
$$\Longleftrightarrow 2\geq 1+rs+r^{'}s^{'}\Longleftrightarrow
1/(rs)\geq (1+rs+r^{'}s{'})/(2rs)\,.$$
By using the fact that $\mu_{a,c}$ is decreasing, we get
$$\mu_{a,c}\left(\sqrt{\frac{2rs}{1+rs+r^{'}s^{'}}}\right)\leq \mu_{a,c}(\sqrt{r\,s})\,,$$
and the result follows from Theorem \ref{muac}.
\end{proof}

\begin{theorem}\label{p1C} For $K>1$, $0<a<c$ and $r,s\in(0,1)$,
$${\rm tanh}(K {\rm artanh}\,r)< \varphi^{a,c}_K(r).$$
The inequality is reversed if we replace $K$ by $1/K$.
\end{theorem}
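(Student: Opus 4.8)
The plan is to reduce the statement to the monotonicity of the weighted product $\mu_{a,c}(r)\,{\rm artanh}\,r$ recorded in Lemma \ref{pc}. First I would introduce the substitution $t={\rm artanh}\,r$, so that $r=\tanh t$ with $t\in(0,\infty)$, and set $s=\varphi^{a,c}_K(r)$, which by definition satisfies $\mu_{a,c}(s)=\mu_{a,c}(r)/K$. Since $\mu_{a,c}$ is a decreasing homeomorphism of $(0,1)$, the target inequality $\tanh(Kt)<s$ is equivalent to $\mu_{a,c}(\tanh(Kt))>\mu_{a,c}(s)=\mu_{a,c}(r)/K$, that is, to $K\,\mu_{a,c}(\tanh(Kt))>\mu_{a,c}(r)$. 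Recasting the claim in this multiplicative form is the key preparatory step.

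Next I would exploit the fact that ${\rm artanh}(\tanh(Kt))=Kt$ exactly absorbs the scaling by $K$. Writing $\psi(r)=\mu_{a,c}(r)\,{\rm artanh}\,r$ for the function of Lemma \ref{pc}, I have $\psi(\tanh(Kt))=Kt\,\mu_{a,c}(\tanh(Kt))$ and $\psi(r)=\psi(\tanh t)=t\,\mu_{a,c}(r)$. Because $K>1$ and $t>0$ give $Kt>t$, and hence $\tanh(Kt)>\tanh t=r$ by monotonicity of $\tanh$, the strict monotonicity of $\psi$ on $(0,1)$ yields $\psi(\tanh(Kt))>\psi(r)$. Dividing this by $t>0$ produces precisely $K\,\mu_{a,c}(\tanh(Kt))>\mu_{a,c}(r)$, which is the reformulated claim; applying the decreasing map $\mu_{a,c}^{-1}$ then recovers $\tanh(K\,{\rm artanh}\,r)<\varphi^{a,c}_K(r)$.

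For the reversed inequality I would run the identical argument with $K$ replaced by $L=1/K\in(0,1)$. Setting $s'=\varphi^{a,c}_L(r)$, so that $\mu_{a,c}(s')=\mu_{a,c}(r)/L$, I now have $Lt<t$, which forces $\tanh(Lt)<\tanh t=r$, so $\psi(\tanh(Lt))<\psi(r)$ and every inequality flips. Dividing by $t$ gives $L\,\mu_{a,c}(\tanh(Lt))<\mu_{a,c}(r)$, equivalently $\mu_{a,c}(\tanh(Lt))<\mu_{a,c}(s')$, and applying $\mu_{a,c}^{-1}$ yields $\tanh(L\,{\rm artanh}\,r)>\varphi^{a,c}_L(r)$, as required.

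I do not expect a genuine obstacle here: the entire content of the theorem sits in Lemma \ref{pc}, and the only point requiring care is the observation that the ${\rm artanh}$ weight appearing there is exactly what converts its monotonicity into a multiplicative statement about $K$. Once the substitution $r=\tanh t$ is in place, the remainder is bookkeeping with the monotone directions of $\tanh$, $\mu_{a,c}$, and $\mu_{a,c}^{-1}$. One should also check that Lemma \ref{pc} is stated under $0<a<c\le 1$, which matches the hypotheses, that $\tanh(Kt)$, $\tanh(Lt)$ and $r$ all lie in $(0,1)$ so that $\psi$ may be applied, and that $t>0$ (equivalently $r\in(0,1)$) legitimizes the division by $t$.
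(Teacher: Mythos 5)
Your proof is correct and takes essentially the same route as the paper: both arguments rest entirely on Lemma \ref{pc}, the paper applying the strict monotonicity of $\mu_{a,c}(r)\,{\rm artanh}\,r$ at the pair $r<s=\varphi^{a,c}_K(r)$ to get ${\rm artanh}\,s>K\,{\rm artanh}\,r$ directly, while you apply it at the pair $r<\tanh(K\,{\rm artanh}\,r)$ and then invert $\mu_{a,c}$ --- the same one-line computation, merely transposed. The reversed case for $1/K$ is handled in the same mirrored way in both proofs.
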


\begin{proof} Let $s=\varphi^{a,c}_K(r)$. Then $s>r$,
 and by equality
$\varphi^{a,c}_K(r)=\mu^{-1}_{a,c}(\mu_{a,c}(r)/K)$ and
 Lemma \ref{pc} we get
$$\frac{1}{K}\mu_{a,c}(r){\rm artanh}\,s=\mu_{a,c}(s){\rm artanh}\,s
>\mu_{a,c}(r){\rm artanh}\,r,$$
which is equivalent to the required inequality. For the case $1/K$ let
 $x=\varphi^{a,c}_{1/K}(r)$. Then
$x<r$, and similarly we get
$$K\mu_{a,c}(r){\rm artanh}\,x=\mu_{a,c}(x){\rm artanh}\,x
<\mu_{a,c}(r){\rm artanh}\,r\,,$$
and this is equivalent to ${\rm tanh}( ({\rm artanh}\,r)/K)> \varphi^{a,c}_{1/K}(r).$
\end{proof}

\vspace{1cm}

\end{document}